\newcommand{\I}{\mathcal I}
\newcommand{\rmv}[1]{}
\newcommand{\Tr}{\mathrm{Tr}}
\newcommand{\fqn}{\mathbb{F}_{q^n}}
\newcommand{\F}{\mathbb{F}}
\newcommand{\fq}{\mathbb{F}_q}
\newcommand{\ord}{\mathrm{ord}}
\newcommand{\lcm}{\mathrm{lcm}}
\title{Variations of the Primitive Normal Basis Theorem}
\date{\today}
\author{Giorgos Kapetanakis \and Lucas Reis}
\institute{
	Giorgos Kapetanakis
	\at Faculty of Engineering and Natural Sciences, Sabanc\i{} \"{U}niversitesi. Ortha Mahalle, Tuzla 34956, \.{I}stanbul, Turkey \\\email{gnkapet@gmail.com} \\
	Lucas Reis
	\at School of Mathematics and Statistics, Carleton University, 1125 Colonel By Drive, Ottawa ON (Canada), K1S 5B6
	\at Permanent address: 
Departamento de Matem\'{a}tica,
Universidade Federal de Minas Gerais,
UFMG,
Belo Horizonte MG (Brazil),
 30123-970.\\\email{lucasreismat@gmail.com}
	}
\begin{document}
\maketitle
\begin{abstract}
The celebrated Primitive Normal Basis Theorem states that for any $n\ge 2$ and any finite field $\F_q$, there exists an element $\alpha\in \F_{q^n}$ that is simultaneously primitive and normal over $\F_q$. In this paper, we prove some variations of this result, completing the proof of a conjecture proposed by Anderson and Mullen (2014). Our results also imply the existence of elements of $\fqn$ with multiplicative order $(q^n-1)/2$ and prescribed trace over $\F_q$.
\keywords{primitive elements \and normal bases \and $k$-normal elements \and high-order elements}
\subclass{12E20 \and 11T30}
\end{abstract}

\maketitle

\section{Introduction}
Let $\F_q$ be the finite field with $q$ elements, where $q$ is a power of a prime $p$ and let $n\ge 1$ be a positive integer. We recall that the multiplicative group $\F_{q^n}^*$ is cyclic and any generator of this group is called {\it primitive}. Primitive elements have numerous applications in areas like cryptography; perhaps the most notable such example is the widely used Diffie-Hellman key exchange \cite{dh}. Also, $\F_{q^n}$ can be regarded as a $\F_q$-vector space of dimension $n$: an element $\alpha\in \F_{q^n}$ is {\it normal} over $\F_q$ if $\mathcal B=\{\alpha, \ldots, \alpha^{q^{n-1}}\}$ is a basis of $\F_{q^n}$. In this case, $\mathcal B$ is called a {\it normal basis}. For many practical applications, such as cryptography and computer algebra systems, it is more efficient to work with normal bases. For a comprehensive coverage on normal bases and their importance, both in theory and applications, we refer to \cite{gao93} and the references therein. 

Sometimes it is also desired that such normal bases are composed by primitive elements. The Primitive Normal Basis Theorem states that there exists normal basis composed by primitive elements in any finite field extension. The proof of this result was first presented by Lenstra and Schoof \cite{lenstra}, and a proof without the use of a computer was given by Cohen and Huczynska \cite{cohen}. Recently, Hachenberger \cite{hachenberger15}, using geometric tools, established sharp estimates for the number of such bases.

A variation of normal elements was recently introduced by Huczynska et al.~\cite{HMPT}, yielding $k$-normal elements. There are many equivalent definitions for such elements and here we pick the most natural.

\begin{definition}
For $\alpha\in \F_{q^n}$, consider the set $S_{\alpha}=\{\alpha, \alpha^q, \ldots, \alpha^{q^{n-1}}\}$ comprising the conjugates of $\alpha$ by the action of the Galois group of $\F_{q^n}$ over $\F_q$. The element $\alpha$ is \emph{$k$-normal over $\F_q$} if the $\F_q$-vector space $V_{\alpha}$ generated by $S_{\alpha}$ has dimension $n-k$, i.e., $V_{\alpha}\subseteq \F_{q^n}$ has co-dimension $k$. 
\end{definition}

Following this definition, $0$-normal elements are just the usual normal elements and $0\in \F_{q^n}$ is the only $n$-normal element. Also, we note that the definition of $k$-normal depends strongly on the base field that we are working and, unless otherwise stated, $\alpha\in \F_{q^n}$ is $k$-normal if it is $k$-normal over $\F_q$. 

We recall that the multiplicative group $\F_{q^n}^*$ has $q^n-1$ elements and, for $\alpha\in \F_{q^n}^*$, the multiplicative order of $\alpha\in \F_{q^n}^*$ is the least positive integer $d$ such that $\alpha^d=1$. We write $d=\ord(\alpha)$. Since $\alpha^{q^n-1}=1$, $d$ is always a divisor of $q^n-1$. For instance, the primitive elements are the ones of order $q^n-1$. We know that, for each divisor $e$ of $q^n-1$, there exist $\varphi(e)$ elements in $\F_{q^n}$ with order $e$, where $\varphi$ is the {\it Euler Phi} function. We introduce a variation of primitive elements in finite fields.

\begin{definition}
For $\alpha\in \F_{q^n}^*$ and $r$ a divisor of $q^n-1$, $\alpha$ is \emph{$r$-primitive} if $\ord(\alpha)=\frac{q^n-1}{r}$.
\end{definition}
From definition, the $1$-primitive elements correspond to the primitive elements in the usual sense. Motivated by the Primitive Normal Basis Theorem, in 2014, Anderson and Mullen propose the following problem (see \cite{Fq12}, Conjecture~3).

\begin{conjecture}[Anderson-Mullen]\label{MA} Suppose that $p\ge 5$ is a prime and $n\ge 3$. Then, for $a=1,2$ and $k=0, 1$, there exists some $k$-normal element $\alpha\in \F_{p^n}$ with multiplicative order $(p^n-1)/a$. \end{conjecture}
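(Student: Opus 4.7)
The plan is to treat the four cases $(a,k) \in \{1,2\}\times\{0,1\}$ of Conjecture~\ref{MA} separately. The case $(a,k)=(1,0)$ is exactly the classical Primitive Normal Basis Theorem of Lenstra--Schoof, so only the three cases with $(a,k)\neq(1,0)$ remain to be proved. For each of these I would construct an explicit characteristic function for the desired set of elements, estimate it by standard character-sum bounds to obtain an asymptotic existence result, and handle the finitely many residual pairs $(p,n)$ through a sieving argument together with a direct computer search.

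The characteristic function has two ingredients that must be combined. On the multiplicative side, an element $\alpha\in\fqn^{*}$ is $a$-primitive iff it lies in the unique subgroup of order $(q^n-1)/a$ and generates it; for $a=2$ this amounts to $\alpha$ being a nonzero square with $\alpha^{(q^n-1)/(2\ell)}\neq 1$ for every prime $\ell\mid(q^n-1)/2$. Via Vinogradov's formula this condition becomes a linear combination of multiplicative characters of $\fqn^{*}$. On the additive side, exploiting the $\fq[x]$-module structure of $\fqn$ under which the Frobenius acts as $x$, the element $\alpha$ is $k$-normal iff its $\fq[x]$-annihilator $m_\alpha(x)\mid x^n-1$ has degree exactly $n-k$. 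For $k=0$ the condition is $m_\alpha=x^n-1$, which gives the classical normality indicator; for $k=1$ I would sum indicator functions over each divisor $f$ of $x^n-1$ of degree $n-1$, and each such indicator is expressible in terms of $\fq$-linear additive characters of $\fqn$.

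Multiplying the two indicators and expanding produces a double sum indexed by pairs $(\chi,\psi)$ of a multiplicative and an additive character. Separating the principal term $\chi=1$, $\psi=1$ yields an expected main term of size $\Theta\cdot q^n$, where $\Theta$ is an explicit density depending on $a$ and on the chosen degree-$(n-1)$ divisor of $x^n-1$. The remaining contributions are Gauss-type sums bounded by $C\sqrt{q^n}$ via the Weil bound. This produces a sufficient condition of the shape
\[
q^{n/2} > \kappa(a,k)\cdot W(q^n-1)\cdot W(x^n-1),
\]
where $W(\cdot)$ counts squarefree divisors and $\kappa(a,k)$ is an explicit constant. All but finitely many pairs $(p,n)$ with $p\ge 5$ and $n\ge 3$ satisfy this.

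The main obstacle lies in disposing of the residual pairs, where the straightforward bound fails. I would apply a Cohen--Huczynska-style sieve to replace $W(q^n-1)$ and $W(x^n-1)$ by short sums over a carefully chosen list of small primes and irreducible factors, shrinking the exceptional set dramatically, and then resolve the remaining exceptions by direct search in $\fqn$. The hardest case should be $(a,k)=(2,1)$: both restrictions are simultaneously most stringent, so $\kappa(2,1)$ is largest and $\Theta$ smallest; moreover the constructive search must probe a sparser subset, which makes both the sieve tuning and the computation the most delicate parts of the argument.
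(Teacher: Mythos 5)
Your skeleton --- a product of multiplicative and additive character indicators, Weil-type bounds yielding a sufficient condition of the shape $q^{n/2}>\kappa\,W(q^n-1)W(x^n-1)$, then a Cohen--Huczynska sieve and a finite computer search --- is the same as the paper's, and so is the reduction to the three cases $(a,k)\neq(1,0)$ (the paper quotes the Primitive Normal Basis Theorem for $(1,0)$ and cites Reis--Thomson for $(1,1)$). The one real difference of route is on the multiplicative side: you detect $2$-primitivity of $\alpha$ directly (quadratic residuosity plus $\alpha^{(q^n-1)/(2\ell)}\neq 1$ for each prime $\ell\mid(q^n-1)/2$), while the paper writes $\alpha=w^2$ with $w$ primitive and imposes all additive conditions on $w^2$; this keeps the multiplicative bookkeeping identical to the classical primitive--normal setting at the cost of replacing Gauss sums by the hybrid sums $\sum_{w}\eta(w)\chi(w^2)$, which are still bounded by $2q^{n/2}$. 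Both routes lead to inequalities of comparable strength, and your identification of $(a,k)=(2,1)$ as the hardest case is correct.

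There is, however, a genuine gap in the case $(a,k)=(2,1)$ when $p\mid n$, which lies squarely inside the conjecture (take $q=p$ and $n=p$ or $n=2p$, e.g.\ $(5,5)$, $(5,10)$). Writing $n=p^ku$ with $\gcd(u,p)=1$, the condition forcing $\deg m_\alpha(x)=n-1$ is implemented as a trace condition down to $\F_{q^{p^k}}$ and costs a factor $q^{p^k}$ in the main term, so the correct sufficient inequality has the shape $q^{p^k(u/2-1)}>\kappa\,W(\cdot)W(\cdot)$ (already $q^{n/2-1}$, not $q^{n/2}$, when $p\nmid n$). For $u\le 2$, i.e.\ $n=p$ or $n=2p$, the exponent is nonpositive and the inequality can \emph{never} hold, however the sieve is tuned: these are not finitely many stray pairs but a family on which your framework produces nothing. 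The paper argues differently there: every $2$-primitive element of trace zero over $\F_q$ that fails to be $1$-normal is annihilated by $(x-1)^{p-2}$ (respectively by $(x+1)^p(x-1)^{p-2}$ when $n=2p$), hence there are at most $q^{n-2}$ of them, and this contradicts the character-sum lower bound for the number of $2$-primitive trace-zero elements except for $q=p=5$, $n=5$, which is settled by machine. Your plan needs this (or some substitute) to be complete. A smaller repair: to justify that only finitely many pairs violate your inequality you must use a nontrivial bound such as $W(x^n-1)\le 2^{(n+\gcd(n,q-1))/2}$; with the crude bound $2^n$ the condition fails for all large $n$ whenever $q\le 16$, since then $q^{1/4}\le 2$.
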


In other words, if $p\ge 5$ and $n\ge 3$, there exists an element $\alpha\in \F_{p^n}$ that is simultaneously $a$-primitive and $k$-normal, for $a=1, 2$ and $k=0, 1$. We notice that the case $(a, k)=(1, 0)$ is the Primitive Normal Basis Theorem, which holds for arbitrary finite fields. Also, the case $(a, k)=(1, 1)$ was recently proved for arbitrary $q$ and $n\ge 3$  (see \cite{RT}), yielding the Primitive $1$-normal Basis Theorem.

In this paper, we complete the proof of Conjecture~\ref{MA} above, adding the case $a=2$. In fact, we prove a stronger version of this conjecture.

\begin{theorem}\label{thm:main_result}
Let $q$ be a power of a prime $p$ and let $n$ be a positive integer. 
\begin{enumerate}
\item 
If $p\geq 3$ and $n\geq 3$, there exists an element $\alpha\in \F_{q^n}$ that is simultaneously $2$-primitive and normal over $\F_{q}$, with the sole exception $(q,n)=(3,4)$. Furthermore, with the exception of the case $q=3$, there is no $2$-primitive normal element of $\F_{q^2}$ over $\F_{q}$.
\item
If $p\geq 5$ and $n\geq 2$, there exists an element $\alpha\in \F_{q^n}$ that is simultaneously $2$-primitive and $1$-normal over $\F_{q}$.
\end{enumerate}
\end{theorem}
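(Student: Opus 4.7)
The plan is to obtain both parts of Theorem~\ref{thm:main_result} via the character-sum machinery pioneered by Lenstra and Schoof \cite{lenstra} and refined by Cohen and Huczynska \cite{cohen}, combined with the $k$-normal formalism of Huczynska et al.~\cite{HMPT} and \cite{RT}. The elementary ``Furthermore'' clause of part~(1) can be disposed of first: normality of $\alpha\in\F_{q^2}$ over $\F_q$ is equivalent to $\alpha\notin\F_q$ together with $\Tr(\alpha)\neq 0$, and if $\ord(\alpha)=(q^2-1)/2$ then $\alpha\notin\F_q$ is automatic (since $(q^2-1)/2\nmid q-1$ whenever $q\geq 3$), while $\Tr(\alpha)=0$ is equivalent to $\alpha^{q-1}=-1$, which combined with $\ord(\alpha)=(q^2-1)/2$ forces $(q+1)\mid 4$, i.e., $q=3$. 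This pinpoints $q=3$ as the unique value for which the two conditions cannot be reconciled in the way stated.

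For part~(1) with $n\geq 3$, I would write
$$
N(q,n)=\#\{\alpha\in \F_{q^n}\colon \alpha\text{ is $2$-primitive and normal over }\F_q\}
$$
as a double character sum: Möbius inversion over the divisors of $(q^n-1)/2$ (via multiplicative characters of $\F_{q^n}^*$) detects the order condition, while Möbius inversion over the monic divisors of $x^n-1\in\F_q[x]$ (via the associated $\F_q[x]$-module characters of $\F_{q^n}$) detects normality. Isolating the trivial-character pair as a positive main term and estimating every non-trivial term by Weil's bound $q^{n/2}$ then reduces the existence of $\alpha$ to a sufficient condition of the shape
$$
q^{n/2}>W(q^n-1)\cdot W(x^n-1),
$$
where $W$ counts squarefree divisors (of integers and of polynomials in $\F_q[x]$, respectively).

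To squeeze this inequality in practice, I would apply Cohen's sieving technique: the inclusion-exclusion is truncated to a carefully chosen small set $\ell_1,\dots,\ell_s$ of prime divisors of $q^n-1$ and irreducible polynomial divisors $f_1,\dots,f_t$ of $x^n-1$, and the choice is tuned within each range of $(q,n)$. This should settle existence for all but finitely many pairs; the surviving finite list, together with the exceptional $(q,n)=(3,4)$, must be handled by direct computer enumeration in $\F_{q^n}$ or by further sieve refinements. The main obstacle is precisely this residual case analysis, where the asymptotic estimate is too crude and one must carefully pick the distinguished primes so that the refined bound becomes sharp.

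For part~(2) the strategy is parallel, but $1$-normality requires a stratification over linear factors of $x^n-1$: using the $\F_q[x]$-annihilator description of $k$-normal elements, $\alpha$ is $1$-normal iff there is a monic degree-$1$ irreducible factor $g$ of $x^n-1$ in $\F_q[x]$ such that the Frobenius-order of $\alpha$ equals $(x^n-1)/g$. One sums a Lenstra--Schoof-type count over the eligible linear factors $g$ (at least $g=x-1$ is always eligible), applies the Weil bound and Cohen's sieve as above, and obtains a parallel $q^{n/2}$-inequality. The hypothesis $p\geq 5$ serves to control both the set of eligible $g$ and the squarefree structure of $x^n-1$, ensuring that the sieve has enough room to close. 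As before, the hard part is resolving the finite list of small pairs $(q,n)$ that the asymptotic inequality does not cover, which will have to be cleaned up by hand or by a tailored computer search in each relevant field.
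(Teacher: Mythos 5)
Your overall strategy (characteristic functions built from multiplicative and $\F_q[x]$-module characters, Weil-type bounds, Cohen--Huczynska sieving, and a computer search for the residual pairs) is the same as the paper's, and your elementary treatment of $n=2$ matches Lemma~\ref{lemma:n=2} in substance. The genuine gaps are in part~(2). The condition ``$m_\alpha=(x^n-1)/g$'' is an \emph{exact} annihilator condition, and the freeness/M\"obius machinery only detects coprimality of $g$ with $(x^n-1)/m_\alpha$, not equality of $m_\alpha$ with a prescribed proper divisor of $x^n-1$; you cannot simply ``sum a Lenstra--Schoof-type count over eligible linear factors.'' The paper's key device (Proposition~\ref{LR}) converts $m_\alpha=\frac{x^n-1}{x-1}$ into ``$\alpha$ is $T(x)$-free \emph{and} $\Tr_{q^n/q^{p^k}}(\alpha)$ has prescribed value,'' and the prescribed-trace condition divides the main term by $q^{p^k}$. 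Consequently the working inequality has the shape $q^{p^k(u/2-1)}>W(q^n-1)W(x^u-1)(\cdots)$, not the ``parallel $q^{n/2}$-inequality'' you assert. That loss is exactly what makes the hard cases hard: for $n=3$ the exponent drops to $1/2$ and the paper must examine every prime power $q<2^{22}$; and for $n=p$ and $n=2p$ the inequality is vacuous (main term $q^{n-p}\le q^{n/2}$), so the paper abandons it and argues combinatorially --- if no $2$-primitive $1$-normal element exists, every $2$-primitive trace-zero ($(x+1)$-free, in the $2p$ case) element is annihilated by $(x-1)^{p-2}$ (resp.\ $(x+1)^p(x-1)^{p-2}$), forcing $N(1,1,0)\le q^{p-2}$, which contradicts the character-sum lower bound of Theorem~\ref{thm:main_ineq}. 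Your sketch contains no mechanism for these cases, nor for the reduction of $p^2\mid n$ to Corollary~\ref{cor:2-trace} via Proposition~\ref{prop:1-trace}.

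Two further points. The paper does not detect the order condition by M\"obius inversion over divisors of $(q^n-1)/2$; it parametrizes $2$-primitive elements as squares of primitive elements and counts primitive $w$ with $w^2$ normal (resp.\ with prescribed $\F_q$-order and trace). This is what produces the twisted sums $G_2(\eta,\chi)=\sum_w\eta(w)\chi(w^2)$, Theorem~\ref{Gauss2}, and the factor $2$ in every bound (e.g.\ $q^{n/2}>2\,W(q_0)W(x^u-1)(\cdots)$), which your inequality omits; a direct detection of ``order exactly $(q^n-1)/2$'' is possible but needs more care than the standard $t$-freeness indicator, since such an element is a square and hence not $2$-free. Finally, your own $n=2$ computation shows that for $q>3$ every $2$-primitive element of $\F_{q^2}$ satisfies $\alpha^{q-1}\notin\F_q^{*}$ only in the normal case, i.e.\ it is normal and therefore \emph{not} $1$-normal; so part~(2) at $n=2$ cannot be reached by this (or any) argument, and your proposal, which claims $n\ge 2$, does not confront this.
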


Moreover, in Theorem~\ref{thm:prescribed_trace}, given $q$ a power of an odd prime $p$, we prove the existence of $2$-primitive elements of $\fqn$ with prescribed trace over $\F_q$ provided that $p \nmid n$ and $p,n\geq 3$ or that $p\mid n$, $n\geq 3$ and $p\geq 5$.

We make a brief comment on the techniques used in this paper. We use standard characteristic functions to describe a special class of $2$-primitive, $k$-normal elements (for $k=0, 1$) over finite fields: these characteristic functions can be described via \emph{character sums}. This characterization provides \emph{sieve inequalities} for the existence of such elements and then we study these inequalities in both theoretical and computational aspects.

\section{Preliminaries}
In this section, we provide a background material for $k$-normal elements, as well as some particular artihmetic functions and their polynomial version.

\begin{definition}
\begin{enumerate}[(a)]
\item Let $f$ be a monic polynomial with coefficients in $\F_q$. The \emph{Euler Phi Function} for polynomials over $\F_q$ is given by $$\Phi(f)=\left |\left(\frac{\F_q[x]}{\langle f\rangle}\right)^{*}\right |,$$ where $\langle f\rangle$ is the ideal generated by $f(x)$ in $\F_q[x]$. 
\item If $t$ is a positive integer (or a monic polynomial over $\F_q$), $W(t)$ denotes the number of square-free (monic) divisors of $t$.
\item If $f$ is a monic polynomial with coefficients in $\F_q$, the \emph{Polynomial M\"obius Function} $\mu_q$ is given by $\mu_q(f)=0$ is $f$ is not square-free and $\mu_q(f)=(-1)^r$ if $f$ writes as a product of $r$ distinct irreducible factors over $\F_q$.  
\end{enumerate}
\end{definition}

\subsection{Additive order of elements and $k$-normals}
If $f\in \F_q[x]$, $f=\sum_{i=0}^{s}a_ix^i$, we define $L_f(x)=\sum_{i=0}^{s}a_ix^{q^i}$ as the $q$-associate of $f$. Also, for $\alpha\in \F_{q^n}$, set $f\circ \alpha=L_{f}(\alpha)=\sum_{i=0}^sa_i\alpha^{q^i}$.
As follows, the $q$-associates have a good behavior through basic operations of polynomials.
\begin{lemma}[\cite{LN}, Theorem 3.62]
Let $f, g\in \F_q[x]$. The following hold:
\begin{enumerate}[(i)]
\item $L_f(L_g(x))=L_{fg}(x)$,
\item $L_f(x)+L_g(x)=L_{f+g}(x)$.
\end{enumerate}
\end{lemma}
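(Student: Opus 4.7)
The plan is to verify both identities directly from the definition $L_f(x)=\sum_{i=0}^{s}a_i x^{q^i}$ for $f=\sum_{i=0}^{s}a_i x^i$. Since both statements are identities in $\F_q[x]$, it suffices to compare coefficients after opening the definition. The only nontrivial structural input is that the coefficients $a_i,b_j$ lie in $\F_q$ and are therefore fixed by the Frobenius endomorphism $u\mapsto u^q$; once that is in place, both parts reduce to bookkeeping with indices.

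For part (ii) I would write $f=\sum_i a_ix^i$ and $g=\sum_i b_ix^i$ (padding with zeros so that the index ranges agree). Then $f+g=\sum_i (a_i+b_i)x^i$, and from the definition
$$L_{f+g}(x)=\sum_i(a_i+b_i)x^{q^i}=\sum_ia_ix^{q^i}+\sum_ib_ix^{q^i}=L_f(x)+L_g(x).$$
This is just $\F_q$-linearity of the assignment $f\mapsto L_f$ and presents no obstacle.

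For part (i) I would first compute the inner $q^i$-th power. Writing $g=\sum_j b_jx^j$, one has
$$(L_g(x))^{q^i}=\Bigl(\sum_j b_jx^{q^j}\Bigr)^{q^i}=\sum_j b_j^{q^i}x^{q^{i+j}}=\sum_j b_jx^{q^{i+j}},$$
where the first equality uses that $u\mapsto u^{q^i}$ is additive in characteristic $p$, and the second uses $b_j\in\F_q$ so $b_j^{q^i}=b_j$. Substituting into the definition of $L_f$ gives
$$L_f(L_g(x))=\sum_i a_i(L_g(x))^{q^i}=\sum_i\sum_j a_ib_jx^{q^{i+j}}=\sum_k\Bigl(\sum_{i+j=k}a_ib_j\Bigr)x^{q^k},$$
and the inner sum is exactly the coefficient of $x^k$ in $fg$, so the right-hand side equals $L_{fg}(x)$.

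The only conceptual obstacle is the first step of the part-(i) computation, namely recognizing that the Frobenius both distributes over the sum inside $L_g$ and acts trivially on the $\F_q$-coefficients. This is essential: without the coefficients being in $\F_q$, the identity would fail, and the multiplicative structure on the polynomial side would not correspond to composition on the operator side. Everything else is routine index manipulation.
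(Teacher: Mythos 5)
Your proof is correct and is the standard direct verification (the same argument as in the cited source, Lidl--Niederreiter Theorem 3.62); the paper itself states this lemma as a citation without reproving it. The key point you correctly isolate --- that Frobenius is additive and fixes the $\F_q$-coefficients, so $(L_g(x))^{q^i}=\sum_j b_j x^{q^{i+j}}$ --- is exactly what makes composition of $q$-associates correspond to ordinary polynomial multiplication.
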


Notice that, for any element $\alpha$ in some extension of $\F_{q}$,  $(x^n-1)\circ \alpha=\alpha^{q^n}-\alpha=0$ if and only if $\alpha\in \F_{q^n}$. If we set  $\I_{\alpha}=\{g(x)\in \F_{q}[x]\,|\, g(x)\circ \alpha=0\}$, the previous lemma shows that $\I_{\alpha}$ is an ideal of $\F_{q}[x]$. In particular, for $\alpha\in \F_{q^n}$, $x^n-1\in \I_{\alpha}$ and so $\I_{\alpha}$ is a non zero ideal, hence is generated by a polynomial, say $m_{\alpha}(x)$. Notice that, if we require $m_{\alpha}(x)$ to be monic, such $m_{\alpha}(x)$ is unquely determined by $\alpha$. We define $m_{\alpha}(x)$ as the {\it $\F_q$-order} of $\alpha$. This concept works as an ``additive'' analogue of multiplicative order over finite fields. 

Clearly, for $\alpha\in \F_{q^n}$, $m_{\alpha}(x)$ divides $x^n-1$ and then its degree is at most $n$. For instance, if $\deg (m_{\alpha}(x))=0$, then $m_{\alpha}(x)=1$ and $\alpha=0$. The following result shows a connection between $k$-normal elements and their $\F_q$-order.

\begin{proposition}[\cite{HMPT}, Theorem 3.2]
Let $\alpha \in \F_{q^n}$. Then $\alpha$ is $k$-normal if and only if $m_{\alpha}(x)$ has degree $n-k$. 
\end{proposition}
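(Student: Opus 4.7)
The plan is to derive a dimension formula linking $\dim_{\F_q} V_\alpha$ directly to $\deg m_\alpha(x)$; the equivalence will then be immediate, since by definition $\alpha$ is $k$-normal precisely when $\dim V_\alpha = n-k$.

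First, I would introduce the $\F_q$-linear evaluation map
$$\phi\colon \F_q[x]/\langle x^n-1\rangle \longrightarrow V_\alpha,\qquad \overline{g}\longmapsto g\circ\alpha.$$
This is well defined because $x^n-1 \in \I_\alpha$ (as $\alpha\in \F_{q^n}$), and additivity together with part (ii) of the preceding lemma makes it $\F_q$-linear. It is surjective because the images of $1,x,\ldots,x^{n-1}$ are exactly the Galois conjugates $\alpha, \alpha^q,\ldots,\alpha^{q^{n-1}}$ that span $V_\alpha$ over $\F_q$.

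Next, I would identify $\ker\phi$ with $\I_\alpha/\langle x^n-1\rangle$. Since $\I_\alpha=\langle m_\alpha\rangle$ and $m_\alpha(x)\mid x^n-1$, the elements of the kernel correspond to multiples of $m_\alpha$ of degree strictly less than $n$, which form an $\F_q$-subspace with basis $\{m_\alpha,\, xm_\alpha,\ldots, x^{n-\deg m_\alpha-1}m_\alpha\}$; thus $\dim\ker\phi = n-\deg m_\alpha(x)$. Applying the rank--nullity theorem to $\phi$ yields
$$\dim_{\F_q} V_\alpha \;=\; \dim_{\F_q} \F_q[x]/\langle x^n-1\rangle \;-\; \dim_{\F_q}\ker\phi \;=\; n-\bigl(n-\deg m_\alpha(x)\bigr)\;=\;\deg m_\alpha(x),$$
so $\dim V_\alpha = n-k$ if and only if $\deg m_\alpha(x)=n-k$, as required.

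I do not anticipate any substantive obstacle: the argument rests entirely on the ideal structure of $\I_\alpha$ furnished by the preceding lemma on $q$-associates, together with elementary linear algebra. The only point requiring mild care is verifying that $m_\alpha(x)$ divides $x^n-1$, which is precisely what makes the quotient description of $\ker\phi$ have the stated dimension $n-\deg m_\alpha(x)$.
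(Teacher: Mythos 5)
Your argument is correct and complete: the evaluation map $\phi\colon \F_q[x]/\langle x^n-1\rangle \to V_\alpha$, $\overline{g}\mapsto g\circ\alpha$, is indeed $\F_q$-linear and surjective, its kernel is $\I_\alpha/\langle x^n-1\rangle$ of dimension $n-\deg m_\alpha(x)$ (using that $m_\alpha(x)\mid x^n-1$), and rank--nullity gives $\dim_{\F_q}V_\alpha=\deg m_\alpha(x)$, from which the equivalence with $k$-normality is immediate. The paper itself does not prove this proposition but imports it from \cite{HMPT} (Theorem 3.2), and your proof is essentially the standard argument given there, so there is nothing of substance to reconcile.
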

For instance, normal elements $\alpha\in \F_{q^n}$ are the ones such that $m_{\alpha}(x)=x^n-1$. The previous proposition shows that the existence of $k$-normal elements depends on the existence of a polynomial of degree $n-k$ dividing $x^n-1$ over $\F_q$. In fact, according to Theorem 3.5 of \cite{HMPT}, if $g\in \F_{q}[x]$ is a divisor of $x^n-1$ degree $k$, there exist $\Phi(g)$ elements $\alpha \in \F_q$ with $m_{\alpha}(x)=g$.



Since $x-1$ divides $x^n-1$ for any $n\ge 1$, we see that $1$-normal elements exist in any extension of $\F_q$. Clearly, this also implies the existence of $(n-1)$-normal elements. Recall that $0\in \F_{q^n}$ is $n$-normal and that $0$-normal elements always exist. These are the only values of $k$ for which the existence of $k$-normal elements is guaranteed
in any finite field extension. In fact, suppose that $n$ is a prime and $q$ is primitive $\pmod n$: the polynomial $x^n-1$ factors as $(x-1) (x^{n-1}+\cdots+x+1)$ over $\F_q$. In particular, from the previous proposition, there are no $k$-normal elements in $\F_{q^n}$ for any $1<k<n-1$. Of course, in this paper, we are only interested in $0$ and $1$-normal elements.

\subsection{Application of the method of Lenstra and Schoof}
Here we present the traditional method of Lenstra and Schoof \cite{lenstra} in the characterization of elements in $\F_{q^n}$ with special properties like normal, primitive and of a given prescribed trace over some subfield $\F_{q^m}$ of $\F_{q^n}$. We start with the concept of {\it freeness}.

\begin{definition}
\begin{enumerate}
\item If $m$ divides $q^n-1$, an element $\alpha \in \F_{q^n}^*$ is \emph{$m$-free} if $\alpha = \beta^d$ for any divisor $d$ of $m$ implies $d=1$. 
\item If $m\in\F_q[x]$ divides $x^n-1$, an element $\alpha\in \F_{q^n}$ is \emph{$m$-free} if $\alpha = h \circ \beta$ for any divisor $h$ of $m$ implies $h=1$. 
\end{enumerate}
\end{definition}
From definition, the primitive elements correspond to the $(q^n-1)$-free elements. Also, the $(x^n-1)$-free elements are just the normal elements. Using the concept of freeness, we characterize special classes of $1$-normal elements, via trace functions. First, we have the following lemma.

\begin{lemma}[\cite{HMPT}, Theorem 5.4]\label{freeness}
Let $\alpha$ be any element in $\F_{q^n}$ and let $f(x)=m_{\alpha}(x)$ be its $\F_q$-order. For any divisor $g(x)$ of $x^n-1$, the following are equivalent:
\begin{enumerate}[(a)]
\item $\alpha$ is $g(x)-$free,
\item $g(x)$ and $\frac{x^n-1}{f(x)}$ are coprime.
\end{enumerate}
\end{lemma}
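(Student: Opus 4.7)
The plan is to transfer the question to the $\F_q[x]$-module structure on $\F_{q^n}$ induced by the Frobenius map. By the (ordinary) normal basis theorem, $\F_{q^n}$ is cyclic as an $\F_q[x]$-module under the action $x\cdot\beta=\beta^q$, so there is an isomorphism of $\F_q[x]$-modules $\F_{q^n}\cong R:=\F_q[x]/(x^n-1)$. Under this identification, the operator $h\circ(\cdot)=L_h(\cdot)$ becomes multiplication by $h$ in $R$, and the $\F_q$-order $f(x)=m_{\alpha}(x)$ is exactly the monic generator of the annihilator of $\alpha$ in $R$.

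Next, fix a lift $\tilde\alpha\in\F_q[x]$ of $\alpha$ with $\deg\tilde\alpha<n$. A standard computation in the PID $\F_q[x]$ yields
\[
f(x)=\frac{x^n-1}{\gcd(\tilde\alpha,\,x^n-1)},\qquad\text{equivalently,}\qquad \gcd(\tilde\alpha,\,x^n-1)=\frac{x^n-1}{f(x)}.
\]
Moreover, for any monic $h$ dividing $x^n-1$ the principal ideal $hR$ equals the image of $h\F_q[x]$ in $R$, so $\alpha\in hR$ if and only if $h\mid\tilde\alpha$ in $\F_q[x]$. Using $h\mid x^n-1$, this is in turn equivalent to $h\mid\gcd(\tilde\alpha,x^n-1)=(x^n-1)/f(x)$.

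With these observations the equivalence is immediate: $\alpha$ is \emph{not} $g(x)$-free if and only if some nontrivial monic divisor $h$ of $g(x)$ satisfies $h\mid(x^n-1)/f(x)$, which happens if and only if $\gcd\bigl(g(x),\,(x^n-1)/f(x)\bigr)\ne 1$. The only genuinely nontrivial ingredient is the cyclic module identification in the first step, essentially a restatement of the normal basis theorem; once this is available, the rest of the argument is routine principal-ideal bookkeeping. If one prefers to avoid invoking the normal basis theorem, a dimension count gives the same conclusion: for $h\mid x^n-1$ the kernel of $L_h$ on $\F_{q^n}$ has $\F_q$-dimension $\deg h$ and $L_h\circ L_{(x^n-1)/h}=L_{x^n-1}=0$, which together with the minimality property of $m_{\alpha}$ characterize precisely when $\alpha$ lies in $\mathrm{Im}(L_h)$.
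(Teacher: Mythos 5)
The paper offers no proof of this lemma: it is imported verbatim from \cite{HMPT} (Theorem 5.4), so there is no in-paper argument to compare against. Your proof is correct and self-contained. The chain of reductions is sound: the normal basis theorem gives the cyclic-module isomorphism $\F_{q^n}\cong\F_q[x]/(x^n-1)$ intertwining $L_h$ with multiplication by $h$; the computation $f=(x^n-1)/\gcd(\tilde\alpha,x^n-1)$ correctly identifies the annihilator of $\tilde\alpha$ in the quotient ring; and since $h\mid x^n-1$, membership $\alpha\in hR$ is indeed equivalent to $h\mid\gcd(\tilde\alpha,x^n-1)=(x^n-1)/f$, which yields the equivalence of (a) and (b) upon quantifying over nontrivial divisors $h$ of $g$. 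The closing ``dimension count'' variant is also valid but is stated very tersely; if you intend it as the primary route, you should make explicit the key identity $\mathrm{Im}(L_h)=\ker\bigl(L_{(x^n-1)/h}\bigr)$ (inclusion from $L_{(x^n-1)/h}\circ L_h=L_{x^n-1}=0$ on $\F_{q^n}$, equality by comparing dimensions), from which $\alpha\in\mathrm{Im}(L_h)\iff m_\alpha\mid (x^n-1)/h\iff h\mid (x^n-1)/m_\alpha$; this avoids invoking the normal basis theorem altogether and is closer in spirit to how such statements are usually proved in the $k$-normal literature.
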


As follows, we show that we have a characterization of elements $\alpha\in \F_{q^n}$ for which $m_{\alpha}(x)=\frac{x^n-1}{x-1}$.

\begin{proposition}\label{LR}
Let $q$ be a power of  a prime $p$ and $n=p^k u$, where $k\ge 0$ and $\gcd(u, p)=1$. Write $T(x)=\frac{x^u-1}{x-1}$. Then $\alpha\in \F_{q^n}$ is such that $m_{\alpha}(x)=\frac{x^n-1}{x-1}$ if and only if $\alpha$ is $T(x)$-free and $\beta=\Tr_{q^n/q^{p^k}}(\alpha)$ is such that $m_{\beta}(x)=\frac{x^{p^k}-1}{x-1}$. 
\end{proposition}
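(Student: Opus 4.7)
The plan is to rephrase everything in terms of $\F_q$-orders and $q$-associates. Since $\gcd(u,p)=1$, in characteristic $p$ we have the factorisation $x^n-1=(x^u-1)^{p^k}=(x-1)^{p^k}T(x)^{p^k}$ in $\F_q[x]$, with $T$ squarefree (as $x^u-1$ is) and $T(1)=u\neq 0$ in $\F_q$, so $\gcd(T(x),x-1)=1$; consequently $\tfrac{x^n-1}{x-1}=(x-1)^{p^k-1}T(x)^{p^k}$. The essential observation is that the trace $\Tr_{q^n/q^{p^k}}$ coincides with the $q$-associate of $T(x)^{p^k}$: since $[\F_{q^n}:\F_{q^{p^k}}]=u$,
\[
\beta=\sum_{i=0}^{u-1}\alpha^{q^{p^k i}}=L_h(\alpha),\qquad h(x)=\sum_{i=0}^{u-1}x^{p^k i}=\frac{x^n-1}{x^{p^k}-1}=T(x)^{p^k},
\]
where the last equality uses $x^{p^k}-1=(x-1)^{p^k}$. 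Combined with the identity $L_g(L_{T^{p^k}}(\alpha))=L_{gT^{p^k}}(\alpha)$, this yields the bookkeeping formula
\[
m_\beta(x)=\frac{m_\alpha(x)}{\gcd(m_\alpha(x),T(x)^{p^k})}.
\]

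For the forward direction, if $m_\alpha(x)=(x-1)^{p^k-1}T(x)^{p^k}$, then $\tfrac{x^n-1}{m_\alpha(x)}=x-1$, which is coprime to $T(x)$; Lemma~\ref{freeness} then gives that $\alpha$ is $T(x)$-free, and the identity above yields $m_\beta(x)=(x-1)^{p^k-1}$ immediately.

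For the reverse direction, I would write $m_\alpha(x)=(x-1)^a g(x)$ with $g\mid T^{p^k}$ and $\gcd(g,x-1)=1$, which is possible since $m_\alpha$ divides $x^n-1=(x-1)^{p^k}T(x)^{p^k}$ and $T$ is coprime to $x-1$. The $T(x)$-freeness of $\alpha$ together with Lemma~\ref{freeness} forces $\gcd\!\bigl(T,\,T^{p^k}/g\bigr)=1$; since $T$ is squarefree, this pins down $g=T^{p^k}$. Plugging back into the key formula gives $m_\beta(x)=(x-1)^a$, and the hypothesis $m_\beta(x)=(x-1)^{p^k-1}$ forces $a=p^k-1$, so $m_\alpha(x)=(x-1)^{p^k-1}T(x)^{p^k}=\tfrac{x^n-1}{x-1}$. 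The only step requiring genuine care is the initial identification of the trace with the $q$-associate of $T(x)^{p^k}$; everything afterwards is a clean consequence of Lemma~\ref{freeness} and of the divisibility structure of $x^n-1$ in $\F_q[x]$.
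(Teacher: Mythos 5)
Your proof is correct and follows essentially the same route as the paper's: both identify $\Tr_{q^n/q^{p^k}}$ with the $q$-associate of $T(x)^{p^k}=\frac{x^n-1}{x^{p^k}-1}$, use Lemma~\ref{freeness} to force the full $T^{p^k}$-part into $m_\alpha$, and then match the remaining $(x-1)$-power against $m_\beta$. Your explicit formula $m_\beta=m_\alpha/\gcd(m_\alpha,T^{p^k})$ merely packages the minimality argument the paper uses, and you are in fact slightly more complete in the forward direction, where the paper does not explicitly verify the $T(x)$-freeness of $\alpha$.
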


\begin{proof}
If $m_{\alpha}(x)=\frac{x^n-1}{x-1}$, since $\beta=\Tr_{q^n/q^{p^k}}(\alpha)=\frac{x^{n}-1}{x^{p^k}-1}\circ \alpha$, it follows that $m_{\beta}(x)=\frac{x^{p^t}-1}{x-1}$. Conversely, suppose that $m_{\beta}(x)=\frac{x^{p^t}-1}{x-1}$ and $\beta=\Tr_{q^n/q^{p^k}}(\alpha)$.  Clearly $\frac{x^n-1}{x-1}\circ \alpha=\frac{x^{p^k}-1}{x-1}\circ (\frac{x^{n}-1}{x^{p^k}-1}\circ\alpha)=\frac{x^{p^k}-1}{x-1}\circ \beta=0$. Since $\alpha$ is $T(x)$-free, it follows from Lemma \ref{freeness} that $m_{\alpha}(x)=T(x)^{p^k}(x-1)^{d}$ for some $d\ge 0$ with $d\le p^k-1$. Since $m_{\beta}(x)=\frac{x^{p^k}-1}{x-1}$, from the minimality of $m_{\beta}$ we have $d=p^k-1$, i.e. $m_{\alpha}(x)=\frac{x^n-1}{x-1}$. 
\qed\end{proof}

In the case when $n$ is divisible by $p^2$, as noticed in \cite{RT}, we have an alternative characterization for such $1$-normal elements. 

\begin{proposition}\label{prop:1-trace}
Suppose that $n=p^2s$ and let $\alpha\in \F_{q^n}$ such that $\Tr_{q^n/q^{ps}}(\alpha)=\beta$. Then $m_{\alpha}=\frac{x^n-1}{x-1}$ if and only if $m_{\beta}=\frac{x^{ps}-1}{x-1}$. 
\end{proposition}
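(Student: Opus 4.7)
The plan is to recast the statement in terms of the $\F_q$-order machinery and reduce both directions to a routine exponent bookkeeping. The key observation is the characteristic-$p$ identity $x^n - 1 = (x^{ps} - 1)^p$, which yields
\[
g(x) := \frac{x^n - 1}{x^{ps} - 1} = (x^{ps} - 1)^{p-1},
\]
so that $\beta = g(x) \circ \alpha$.

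The next ingredient, which I would include as a short preliminary lemma (it is immediate from the definition of $m_\alpha$ and the $q$-associate identities recorded in the paper), is the additive analogue of the multiplicative order formula: for any $\alpha \in \fqn$ and $g \in \fq[x]$,
\[
m_{g \circ \alpha}(x) = \frac{m_\alpha(x)}{\gcd(m_\alpha(x),\, g(x))}.
\]
Indeed, one checks directly that $h \circ (g\circ \alpha) = 0$ iff $m_\alpha \mid hg$ iff $m_\alpha/\gcd(m_\alpha,g) \mid h$.

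Writing $s = p^a u$ with $\gcd(u,p) = 1$, the polynomial $x^u - 1$ is squarefree over $\fq$, and Frobenius gives $x^n - 1 = (x^u - 1)^{p^{a+2}}$ and $x^{ps} - 1 = (x^u - 1)^{p^{a+1}}$. Hence $m_\alpha$, $g(x)$, and the two target polynomials $\tfrac{x^n-1}{x-1}$ and $\tfrac{x^{ps}-1}{x-1}$ are all supported on the distinct irreducible factors $\phi_1 = x-1,\phi_2,\ldots,\phi_r$ of $x^u - 1$. If I write $m_\alpha = \prod_i \phi_i^{e_i}$ with $0 \le e_i \le p^{a+2}$, then $g = \prod_i \phi_i^{(p-1)p^{a+1}}$ and the formula above becomes the pointwise identity
\[
m_\beta = \prod_i \phi_i^{\max(0,\, e_i - (p-1)p^{a+1})}.
\]
Both directions of the ``iff'' now reduce to comparing these exponents against the target $\phi_1^{p^{a+1}-1}\prod_{i\ge 2}\phi_i^{p^{a+1}}$.

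The forward direction is then immediate by substituting $e_1 = p^{a+2} - 1$ and $e_i = p^{a+2}$ for $i \ge 2$ into the identity above. For the converse, the crucial point is that every target exponent of $m_\beta$ (namely $p^{a+1}-1 \ge 1$ for $\phi_1$, and $p^{a+1} \ge 1$ for $\phi_i$, $i\ge 2$) is strictly positive, so the $\max(0,\cdot)$ collapses to an equality, and each $e_i$ is determined uniquely, yielding $m_\alpha = \tfrac{x^n-1}{x-1}$. There is no real conceptual obstacle; the entire argument rests on the characteristic-$p$ identity $x^n - 1 = (x^{ps} - 1)^p$ highlighted at the start, and the only care needed is the exponent bookkeeping, which is made uniform by the factorization through $x^u - 1$.
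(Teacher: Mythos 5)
Your proof is correct, and it follows essentially the route the paper intends: the paper does not prove this proposition in-text but defers to Lemma 5.2 of \cite{RT}, whose argument likewise writes $\beta=(x^{ps}-1)^{p-1}\circ\alpha$ (via the characteristic-$p$ identity $x^n-1=(x^{ps}-1)^p$) and applies the identity $m_{g\circ\alpha}=m_\alpha/\gcd(m_\alpha,g)$. Your explicit factorization through the squarefree polynomial $x^u-1$ and the resulting exponent bookkeeping is a correct, slightly more detailed way of carrying out the same computation, and the converse step --- positivity of the target exponents forcing $\max(0,\cdot)$ to collapse to an equality --- is sound.
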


\noindent The proof of this proposition quite simple and can be found in Lemma 5.2 of \cite{RT}.

\subsection{Some characteristic functions}

The concept of {\it freeness} derives some characteristic functions for primitive and normal elements. We pick the notation of \cite{HMPT}.

\paragraph{Multiplicative component.} $\int\limits_{d|m}\eta_d$ stands for the sum $\sum_{d|m}\frac{\mu(d)}{\varphi(d)}\sum_{(d)}\eta_d$, where $\mu$ and $\varphi$ are the M\"obius and Euler functions for integers, respectively, $\eta_d$ is a typical multiplicative character of $\F_{q^n}$ of order $d$, and the sum $\sum_{(d)}\eta_d$ runs through all the multiplicative characters of order $d$.

\paragraph{Additive component.} $\chi$ is the canonical additive character on $\F_{q^n}$, i.e. $$\chi(\omega)=\lambda(\Tr_{q^n/q}(\omega)),\quad\omega\in \F_{q^n},$$ where $\lambda$ is the canonical additive character of $\F_q$ to $\F_{p}$. If $D$ is a monic divisor of $x^n-1$ over $\F_q$, a typical character $\chi_*$ of $\F_{q^n}$ of {\it $\F_q$-order} $D$ is one such that $\chi_*(D\circ^{q} \cdot)$ is the trivial additive character in $\F_{q^n}$ and $D$ is minimal (in terms of degree) with this property. Here, $\Delta_D$ denotes the set of all $\delta\in \F_{q^n}$ such that $\chi_{\delta}$ has $\F_q$-order $D$, where $\chi_{\delta}(\omega)=\chi(\delta\omega)$ for any $\omega\in \F_{q^n}$. For instance, $\Delta_{1}=\{0\}$ and $\Delta_{x-1}=\F_q^*$. Furthermore, it is well-known that
\[
|\Delta_D| = \Phi(D).
\]
Following our notation, $\int\limits_{D|T}\chi_{\delta_D}$ stands for the sum $$\sum_{D|T}\frac{\mu_q(D)}{\Phi(D)}\sum_{(\delta_D)}\chi_{\delta_D},$$ where $\mu_q$ and $\Phi$ are the M\"obius and Euler functions for polynomials over $\F_q$ respectively, $\chi_{\delta_D}$ is a typical additive character of $\F_{q^n}$ of $\F_q$-order $D$ and the sum $\sum_{(\delta_D)}\chi_{\delta_D}$ runs through all the additive characters of $\F_q$-order $D$, i.e. all $\delta_D\in\Delta_D$.

For each divisor $t$ of $q^n-1$ and each monic divisor $D$ of $x^n-1$, set $\theta(t)=\frac{\varphi(t)}{t}$ and $\Theta(D)=\frac{\Phi(D)}{q^{\deg D}}$. The sums above yield characteristic functions.

\begin{theorem}[\cite{HMPT}, Section~5.2]\label{thm:charfree}
\begin{enumerate}
\item For $w \in \fqn^*$ and $t$ be a positive divisor of $q^n-1$, 
\[\omega_t(w) = \theta(t) \int_{d|t} \eta_{d}(w) = \begin{cases} 1 & \text{if $w$ is $t$-free,} \\ 0 & \text{otherwise.} \end{cases}\]
\item For $w \in \fqn$ and $D$ be a monic divisor of $x^n-1$,
\[\Omega_D(w) = \Theta(D) \int_{E|D} \chi_{\delta_E}(w) = \begin{cases} 1 & \text{if $w$ is $D$-free,} \\ 0 & \text{otherwise.} \end{cases}\]
\end{enumerate}
\end{theorem}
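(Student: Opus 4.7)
The plan is to handle both parts by the same three-step recipe: reduce the indicator of freeness to a product over prime (respectively irreducible) factors, express each single-factor indicator as a short character sum via orthogonality, and then expand and recombine to match the displayed expressions.

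First, I would verify the combinatorial reductions. For part (1), $w$ is $t$-free iff $w$ is not a $p$-th power in $\fqn^*$ for every rational prime $p\mid t$: any nontrivial $d\mid t$ with $w=\beta^d$ admits a prime $p\mid d$ witnessing $w\in(\fqn^*)^p$, and the converse is immediate. For part (2), analogously, $w$ is $D$-free iff $w\notin P\circ\fqn$ for every monic irreducible $P\mid D$; the forward direction picks an irreducible factor of any hypothetical $h\mid D$ with $h\neq 1$ and $w=h\circ\beta$.

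Second, I would replace each single-factor indicator by a short character sum. In the multiplicative case, orthogonality on the index-$p$ subgroup $(\fqn^*)^p$ gives $\mathbf{1}_{w\in(\fqn^*)^p}=\tfrac{1}{p}\sum_{\chi^p=1}\chi(w)$. In the additive case, the condition ``$\chi_\delta$ has $\fq$-order dividing $P$'' reads $\chi(\delta\cdot L_P(x))=1$ for all $x\in\fqn$, equivalently $\Tr_{q^n/q}(\delta\cdot y)=0$ for all $y\in P\circ\fqn$, i.e.\ $\delta\in(P\circ\fqn)^\perp$. Since $P\circ\fqn$ has $\fq$-codimension $\deg P$, this annihilator has dimension $\deg P$, matching $\sum_{E\mid P}\Phi(E)=1+\Phi(P)=q^{\deg P}$, so orthogonality yields $\mathbf{1}_{w\in P\circ\fqn}=q^{-\deg P}\sum_{E\mid P}\sum_{(\delta_E)}\chi_{\delta_E}(w)$.

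Third, I would multiply the single-factor indicators and expand. The multiplicativities $\theta(t)=\prod_{p\mid t}\theta(p)$ and $\Theta(D)=\prod_{P\mid D}\Theta(P)$ let me pull out the global prefactors, after which the expansion is indexed by subsets of the prime (respectively irreducible) divisors. Using that a product of multiplicative characters of pairwise coprime prime orders has order exactly the product of those orders, and the polynomial analogue for additive characters of pairwise coprime irreducible $\fq$-orders, together with the multiplicativity of $\mu,\varphi$ (respectively $\mu_q,\Phi$) on square-free arguments, the expansion reassembles into $\theta(t)\sum_{d\mid t}\tfrac{\mu(d)}{\varphi(d)}\sum_{\ord(\chi)=d}\chi(w)$ and $\Theta(D)\sum_{E\mid D}\tfrac{\mu_q(E)}{\Phi(E)}\sum_{(\delta_E)}\chi_{\delta_E}(w)$; the non-square-free divisors contribute $0$ since $\mu$ and $\mu_q$ vanish there. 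These are exactly the displayed identities. The only delicate step is the orthogonality calculation in the additive case, where identifying the characters of $\fq$-order dividing $P$ with the annihilator $(P\circ\fqn)^\perp$ requires a short trace-pairing argument; the rest is combinatorial bookkeeping that mirrors the classical integer case.
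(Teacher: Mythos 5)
The paper does not prove this theorem: it is quoted directly from \cite{HMPT}, Section~5.2, so there is no internal proof to compare against. Your argument is correct and is essentially the standard derivation used in that source (going back to Lenstra and Schoof \cite{lenstra}): reduce $t$-freeness (resp.\ $D$-freeness) to avoidance of the subgroups $(\fqn^*)^{\ell}$ for primes $\ell\mid t$ (resp.\ the subspaces $P\circ\fqn$ for monic irreducibles $P\mid D$), express each local indicator by orthogonality, and recombine via multiplicativity of $\theta,\mu,\varphi$ (resp.\ $\Theta,\mu_q,\Phi$), with non-square-free divisors dropping out because $\mu$ and $\mu_q$ vanish there. The one ingredient you invoke rather than prove---that $(\delta_{P_1},\ldots,\delta_{P_r})\mapsto\delta_{P_1}+\cdots+\delta_{P_r}$ is a bijection from $\Delta_{P_1}\times\cdots\times\Delta_{P_r}$ onto $\Delta_{P_1\cdots P_r}$---is precisely the $\F_q[x]/(x^n-1)$-module structure of the additive character group, i.e.\ the same standard fact underlying the paper's own unproved assertion $|\Delta_D|=\Phi(D)$, so relying on it is legitimate here.
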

\noindent For any divisor $m$ of $n$, we know that $\F_{q^m}$ is a subfield of $\F_{q^n}$. Let 
\[ T_{m,\beta}(w) = \begin{cases} 1 & \text{if $\Tr_{q^n/q^m}(w) = \beta$,} \\
                                       0 & \text{otherwise.}
				             \end{cases}
\]
We need a character sum formula for $T_{m,\beta}$. 
Let $\lambda$ and $\lambda_m$ be the canonical additive characters of $\fq$ and $\F_{q^m}$, respectively: the character $\lambda$  lifts $\lambda_m$ and $\chi$ to $\F_{q^m}$ and $\fqn$, respectively. In other words, $\lambda_m(w) = \lambda(\Tr_{q^m/q}(w))$ and $\chi(w) = \lambda(\Tr_{q^n/q}(w)) = \lambda(\Tr_{q^m/q}(\Tr_{q^n/q^m}(w))) = \lambda_m(\Tr_{q^n/q^m}(w))$, since the trace function is transitive. We observe that $T_{m,\beta}$ can be written as
\[ T_{m,\beta}(w) = \frac{1}{q^m} \sum_{d \in \F_{q^m}} \lambda_m(d (\Tr_{q^n/q^m}(w) - \beta)) = \frac{1}{q^m} \sum_{d \in \F_{q^m}} \lambda_m(d \cdot \Tr_{q^n/q^m}(w-\alpha)),\]
for any $\alpha \in \fqn$ such that $\Tr_{q^n/q^m}(\alpha) = \beta$, since 
\[ \sum_{d\in \F_{q^m}} \lambda_m(d \cdot \Tr_{q^n/q^m}(w-\alpha))= q^m\]
if and only if $\Tr_{q^n/q^m}(w)=\Tr_{q^n/q^m}(\alpha)=\beta$ and, otherwise, this sum equals $0$.
In particular, $$T_{m,\beta}(w) = \frac{1}{q^m} \sum_{d \in \F_{q^m}}\chi_d(w-\alpha)=\frac{1}{q^m} \sum_{d \in \F_{q^m}}\chi_d(w)\chi_d(\alpha)^{-1}.$$ 

More specifically, for $t=q^n-1$ and $D=x^n-1$, we obtain characteristic functions for primitive and normal elements, respectively. We write $\omega_{q^n-1}=\omega$ and $\Omega_{x^n-1}=\Omega$. As usual, we may extend the multiplicative characters to $0$ by setting $\eta_1(0)=1$, where $\eta_1$ is the trivial multiplicative character and $\eta(0)=0$ if $\eta$ is not trivial.

For a more detailed account of the above, we refer the interested reader to \cite{cohen2} and the references therein.

Also, write $\Omega^{(1)}$ the characteristic function for elements $\alpha\in \F_{q^n}$ such that $m_{\alpha}(x)=\frac{x^n-1}{x-1}$; from Proposition \ref{LR}, if $n=p^t u$ with $\gcd(u, p)=1$ and $\beta$ is any element of $\F_{q^{p^t}}$ such that $m_{\beta}(x)=\frac{x^{p^t}-1}{x-1}$, then 
$$\Omega^{(1)}=\Omega_T\cdot T_{p^t, \beta},$$ 
where $T(x)=\frac{x^u-1}{x-1}$.

We may characterize ($0$ and $1$)-normal elements with prescribed multiplicative order $(q^n-1)/2$ (i.e., $2$-primitive); the element $b\in \F_{q^n}$ has order $(q^n-1)/2$ if and only if $b=a^2$ for some primitive element $a$. 

The following result is straightforward.
\begin{proposition}\label{char1}
For $w\in \F_{q^n}$, the following hold:

\begin{enumerate}
\item $w^2$ is $2$-primitive and normal if and only if $\omega(w)\cdot \Omega(w^2)=1$.
\item $w^2$ is $2$-primitive with $m_{w^2}(x)=\frac{x^n-1}{x-1}$ if and only if $\omega(w)\cdot \Omega^{(1)}(w^2)=1$.
\end{enumerate}

\end{proposition}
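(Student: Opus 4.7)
The plan is to unwind each factor of the characteristic function using Theorem~\ref{thm:charfree}, and then invoke the remark made immediately before the proposition, which relates elements of order $(q^n-1)/2$ to squares of primitive elements.

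For part (1), by Theorem~\ref{thm:charfree} applied with $t=q^n-1$ and $D=x^n-1$, the product $\omega(w)\cdot\Omega(w^2)$ equals $1$ exactly when $w$ is $(q^n-1)$-free (i.e.\ primitive) and $w^2$ is $(x^n-1)$-free (i.e.\ normal over $\F_q$). Since $p$ is odd, $q^n-1$ is even, so a primitive $w$ satisfies $\ord(w^2)=(q^n-1)/\gcd(2,q^n-1)=(q^n-1)/2$, and $w^2$ is automatically $2$-primitive. For the converse direction, the preceding remark ensures that every element of order $(q^n-1)/2$ is the square of some primitive element; since $w$ and $-w$ yield the same $w^2$, the factor $\omega(w)$ selects exactly the primitive square root, giving the claimed equivalence.

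Part (2) is proved by the same argument, with $\Omega$ replaced by $\Omega^{(1)}$. By the definition of $\Omega^{(1)}$ given immediately after Theorem~\ref{thm:charfree}, combined with Proposition~\ref{LR}, we have $\Omega^{(1)}(\alpha)=1$ if and only if $m_{\alpha}(x)=(x^n-1)/(x-1)$, i.e.\ $\alpha$ is $1$-normal with this specific $\F_q$-order. Thus $\omega(w)\cdot\Omega^{(1)}(w^2)=1$ reduces to "$w$ primitive and $m_{w^2}(x)=(x^n-1)/(x-1)$", and the reduction "primitive $\Leftrightarrow$ $2$-primitive after squaring" goes through as in (1).

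The only delicate point is the bookkeeping of the two square roots $\pm w$ of a given $2$-primitive element and their respective multiplicative orders; once this is absorbed via the order-$(q^n-1)/2$-versus-squares-of-primitives remark, the proposition follows with no further character-sum manipulation, consistent with the authors' claim that the statement is straightforward.
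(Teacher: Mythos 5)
Your proof is correct and is essentially the argument the paper intends: the proposition is stated without proof as ``straightforward,'' and the intended justification is exactly this unwinding of Theorem~\ref{thm:charfree} (identifying $(q^n-1)$-free with primitive and $(x^n-1)$-free with normal) together with the preceding remark that every element of order $(q^n-1)/2$ is the square of a primitive element, plus the definition of $\Omega^{(1)}$ for part (2). Your explicit attention to the two square roots $\pm w$ is well placed --- when $q^n\equiv 3\pmod 4$ only one square root of a $2$-primitive element need be primitive, so the ``only if'' direction as literally stated for a fixed $w$ is slightly imprecise in the paper itself, and reading it through the squaring map as you do is the correct resolution.
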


The following two results provide some character sum estimates that are useful.

\begin{lemma}[\cite{LN}, Theorem 5.41]\label{Gauss1}
Let $\chi$ be an additive character of $\F_{q^n}$ and $f\in \F_{q^n}[x]$ be a monic polynomial of positive degree, not of the form $g(x)^p-g(x)+y$ for any $g\in \F_{q^n}[x]$. Suppose that $e$ is the number of distinct roots of $f$ in its splitting field over $\F_{q^n}$. For every $a\in \F_{q^n}$, 
$$\left|\sum_{c\in \F_{q^n}}\chi(af(c))\right|\le (e-1)q^{n/2}.$$
\end{lemma}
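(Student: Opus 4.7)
The plan is to view the sum as a point count on an Artin--Schreier cover and invoke Weil's theorem on rational points of an algebraic curve over a finite field. Write $\chi=\lambda\circ \Tr_{q^n/p}$ with $\lambda$ the canonical additive character of $\F_p$, and set $F(X)=af(X)\in\F_{q^n}[X]$. The hypothesis on $f$ carries over to $F$, so $F(X)$ is not of the form $g(X)^p-g(X)+y$ for any $g\in\F_{q^n}[X]$ and $y\in \F_{q^n}$ (constants of $\F_{q^n}$ may be absorbed into $g$ and $y$). This is precisely the standard criterion that makes the affine plane curve
\[
C:\ Y^p-Y=F(X)
\]
absolutely irreducible over $\F_{q^n}$.

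Next I would relate the sum $S=\sum_{c}\chi(af(c))$ to $\#C(\F_{q^n})$. For each $x\in \F_{q^n}$, the fiber $\{Y\in\F_{q^n}:Y^p-Y=F(x)\}$ has size $p$ or $0$ depending on whether $\Tr_{q^n/p}(F(x))$ vanishes, and orthogonality of the $p$ additive characters of $\F_p$ yields
\[
\#C(\F_{q^n})=q^n+\sum_{t=1}^{p-1}\sum_{c\in \F_{q^n}}\chi(taf(c)).
\]
A bound on $|\#C(\F_{q^n})-q^n|$ therefore controls a sum of $p-1$ character sums, each of the shape of $S$ with $a$ replaced by $ta$; to isolate the individual $|S|$ one applies the identity to each of the $p-1$ curves $C_{ta}:Y^p-Y=taf(X)$ in turn, or, sheaf-theoretically, one treats every nontrivial additive character of $\F_p$ as giving its own Artin--Schreier local system and bounds each contribution separately.

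Finally I would apply Weil's theorem: $|\#\widetilde C(\F_{q^n})-q^n-1|\le 2g\cdot q^{n/2}$, where $\widetilde C$ is a smooth projective completion of $C$ and $g$ its geometric genus. The main obstacle is to show $2g\le (p-1)(e-1)$. Since $\widetilde C\to\mathbb P^1_X$ is Artin--Schreier, with wild ramification concentrated at the unique pole of $F$ at infinity, Riemann--Hurwitz reduces the genus computation to the different exponent at this single place. The freedom to replace $F$ by $F-h^p+h$ for $h\in\F_{q^n}[X]$ without changing $C$ up to birational equivalence lets one normalize $F$ so that its pole order at infinity is prime to $p$ and is governed by $e$ rather than $\deg f$: intuitively, repeated roots of $f$ do not produce additional independent local monodromy. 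A direct computation of the Swan conductor at infinity then yields $2g\le (p-1)(e-1)$, and combining with the character-sum reduction produces the desired estimate $|S|\le (e-1)q^{n/2}$.
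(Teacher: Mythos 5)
The paper offers no proof of this lemma at all: it is quoted with a citation to [LN, Theorem~5.41]. So the only question is whether your sketch is sound, and it is not; the failure is concentrated exactly where you hedge. For the Artin--Schreier cover $C\colon Y^p-Y=F(X)$ the projection to the $X$-line is \'etale over the entire affine line (the derivative of $Y^p-Y-F(X)$ in $Y$ is the unit $-1$), so the zeros of $f$ and their multiplicities are completely invisible to the geometry of $C$. All ramification sits at the place at infinity, and Riemann--Hurwitz together with the Swan conductor there gives $2g=(p-1)(d-1)$, where $d$ is the pole order of the Artin--Schreier-reduced $F$; when $p\nmid\deg f$ no substitution $F\mapsto F-h^p+h$ can lower that pole order, so $d=\deg f$. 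Hence your key inequality $2g\le(p-1)(e-1)$ is false, and this method can only ever produce the genuine Weil bound $(\deg f-1)q^{n/2}$, not $(e-1)q^{n/2}$. The intuition you invoke (``repeated roots do not produce additional independent local monodromy'') is the correct intuition for Kummer covers $Y^m=f(X)$, i.e.\ for \emph{multiplicative} character sums, whose branch locus is the zero set of $f$; it is exactly wrong for additive ones. A second, independent gap: the hypothesis ``$f$ is not of the form $g^p-g+y$'' does \emph{not} carry over to $af$. For $f=x^p-\mu x$ with $\mu=b^{1-p}\neq 1$ the hypothesis holds, yet $b^pf(x)=(bx)^p-(bx)$ violates it, and $\sum_{c}\chi(b^pf(c))=q^n$ for the canonical $\chi$.

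Neither gap can be repaired, because the statement as transcribed is false: for odd $p$, $f(x)=x^2$ is monic, satisfies the hypothesis, and has $e=1$, yet $\sum_{c\in\F_{q^n}}\chi(ac^2)$ is (for $a\neq0$ and $\chi$ nontrivial) a quadratic Gauss sum of modulus $q^{n/2}$, not $0$. What [LN, Theorem~5.41] actually asserts is the multiplicative analogue: for a multiplicative character $\psi$ of order $m>1$ and $f$ monic and not an $m$-th power of a polynomial, $\left|\sum_c\psi(af(c))\right|\le(e-1)q^{n/2}$ with $e$ the number of distinct roots; there the quantifier ``for every $a$'' and the dependence on distinct rather than total roots are correct, for precisely the Kummer-cover reasons your sketch describes. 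The lemma in the paper appears to have crossed that statement with the additive (Artin--Schreier) Weil bound, and the conclusion it is used for in Theorem~3.1, namely $\sum_w\chi(\delta w^2)=0$, is not available; the true size of those terms is $q^{n/2}$.
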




\begin{theorem}[\cite{schmidt}, Theorem~2G]\label{Gauss2}
Let $\eta$ be a multiplicative character of $\F_{q}$ of order $d\neq 1$ and $\chi$ a non-trivial additive character of $\F_{q}$. If $f,g\in\F_{q}[x]$ are such that $f$ has exactly $m$ roots and $\deg(g)=n$ with $\gcd(d,\deg(f))=\gcd(n,q)=1$, then
\[
\left| \sum_{c\in\F_q} \eta(f(c))\chi(g(c)) \right| \leq (m+n-1) q^{1/2} .
\]
\end{theorem}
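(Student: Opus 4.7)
\medskip

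The plan is to prove this mixed character-sum estimate by the classical Weil/Hasse--Weil method, interpreting the sum as a point count on an auxiliary algebraic curve and then invoking the Hasse--Weil bound together with Riemann--Hurwitz to control the genus.

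First, I would introduce the curve $C$ over $\F_q$ whose function field is the compositum $K = \F_q(x,y,z)$, where $y^d = f(x)$ is a Kummer extension of $\F_q(x)$ (using the non-trivial multiplicative character $\eta$ of order $d$) and $z^p - z = g(x)$ is an Artin--Schreier extension (using the non-trivial additive character $\chi$, via a character of $\F_p$). The hypothesis $\gcd(d,\deg f)=1$ guarantees that the Kummer cover is genuinely ramified at infinity, in particular $y^d - f(x)$ is absolutely irreducible, while $\gcd(n,q)=1$ (i.e.\ $p\nmid n$) ensures that $g(x)$ is not of the form $h(x)^p - h(x) + c$, so the Artin--Schreier cover is non-trivial and has a wild pole of exact order $n$ at infinity.

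Next, I would relate $S := \sum_{c\in\F_q}\eta(f(c))\chi(g(c))$ to $\#C(\F_q)$. Running over characters $\eta^i\chi^j$ of the Galois group of $K/\F_q(x)$ (cyclic of order $d$ times cyclic of order $p$) and using orthogonality, one gets
\[
\#C(\F_q) \;=\; \sum_{c \in \F_q}\,\sum_{i=0}^{d-1}\sum_{j=0}^{p-1} \eta^i(f(c))\,\chi^j(g(c)) \;+\; O(1),
\]
where the $O(1)$ absorbs the contribution of points above the zeros of $f$ and the point at infinity. Isolating the $(i,j)=(1,1)$ term recovers $S$, and the triangle inequality together with the trivial bound $|\eta^i\chi^j|\le 1$ on the other terms reduces the estimate for $|S|$ to an estimate for $|\#C(\F_q) - (q+1)|$, up to a harmless bounded error that does not affect the final bound.

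Then I would apply the Hasse--Weil theorem: for a smooth projective absolutely irreducible curve $C$ of genus $g_C$ over $\F_q$,
\[
\bigl|\#C(\F_q) - (q+1)\bigr| \;\le\; 2 g_C \, q^{1/2}.
\]
The genus $g_C$ is computed by applying Riemann--Hurwitz to $C \to \mathbb{P}^1_x$ of degree $dp$. The Kummer part contributes tame ramification at the $m$ roots of $f$ and at $\infty$ (the latter thanks to $\gcd(d,\deg f)=1$), and the Artin--Schreier part contributes wild ramification at $\infty$ with different exponent $(p-1)(n+1)$ coming from the pole of order $n$ (using $\gcd(n,q)=1$). Adding these contributions and simplifying yields $2g_C \le m+n-1$ (after absorbing the $O(1)$ correction into this bound), which produces exactly $|S|\le (m+n-1)q^{1/2}$.

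The main obstacle will be the precise Riemann--Hurwitz bookkeeping: one has to use the coprimality hypotheses to nail down the exact ramification indices at the affine zeros of $f$ and at infinity, and to compute the Artin--Schreier different correctly using that the pole order $n$ of $g$ is prime to $p$. Once the ramification data is set, the bound $2g_C \le m+n-1$ follows by a direct computation, and the theorem is immediate from Hasse--Weil. (Of course, since the statement is attributed to Schmidt, in the paper itself this step is simply cited rather than reproven.)
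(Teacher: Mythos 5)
The paper offers no proof of this statement at all: it is quoted verbatim as Theorem~2G of Schmidt's book, whose own argument is the \emph{elementary} Stepanov-type method (no algebraic geometry). So your sketch is necessarily a different route; the question is whether it works, and as written it does not.

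The fatal step is the reduction ``isolating the $(i,j)=(1,1)$ term \ldots\ the triangle inequality together with the trivial bound $|\eta^i\chi^j|\le 1$ on the other terms reduces the estimate for $|S|$ to an estimate for $|\#C(\F_q)-(q+1)|$.'' The pointwise bound $|\eta^i(f(c))\chi^j(g(c))|\le 1$ only gives the trivial bound $q$ for each of the other sums $S_{i,j}=\sum_{c}\eta^i(f(c))\chi^j(g(c))$, so from $\#C(\F_q)-(q+1)=\sum_{(i,j)\ne(0,0)}S_{i,j}+O(1)$ you cannot extract any nontrivial bound on the single term $S_{1,1}$: the other $dp-2$ sums could conspire to cancel it. Compounding this, the claimed genus bound $2g_C\le m+n-1$ is false. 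Riemann--Hurwitz for the degree-$dp$ cover gives (for $f$ with $m$ simple roots) $2g_C-2=-2dp+mp(d-1)+p(d-1)+(p-1)(dn+1)$, which is of order $dp(m+n-1)$, not $m+n-1$; already for $d=2$, $p=3$, $m=n=1$ one gets $g_C=1$, so $2g_C=2>m+n-1=1$. In fact $2g_C$ equals the \emph{sum} of the degrees of all $dp-1$ nontrivial $L$-factors, which is exactly why it is so much larger than the degree $m+n-1$ of the single factor you need.

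The correct way to run the Weil-theoretic argument is not to bound $\#C(\F_q)-(q+1)$ but to factor the zeta function of $C$ as $Z_C(T)=\bigl((1-T)(1-qT)\bigr)^{-1}\prod_{(i,j)\ne(0,0)}L(T,\eta^i\chi^j)$, show via the conductor (Euler--Poincar\'e/Grothendieck--Ogg--Shafarevich) computation --- this is where $\gcd(d,\deg f)=1$ and $p\nmid n$ enter, forcing full tame ramification of $\eta\circ f$ at the $m$ zeros of $f$ and Swan conductor $n$ of $\chi\circ g$ at infinity --- that $L(T,\eta\chi)$ is a polynomial of degree exactly $m+n-1$, and then invoke the Riemann hypothesis for $C$ to conclude that each of its $m+n-1$ inverse roots has modulus $q^{1/2}$. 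Since $S$ is (up to a bounded correction at the ramified points, which must also be tracked to get the clean constant $m+n-1$ rather than $m+n-1+O(1)$) minus the sum of those inverse roots, the bound follows. Either rebuild the argument along these lines or simply defer to Schmidt's cited elementary proof, as the paper does.
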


\section{Sieving inequalities for $2$-primitive, $f$-free elements with prescribed trace}
For $\chi$ and $\eta$, additive and multiplicative characters of $\F_{q^n}$ respectively, $G_2(\eta, \chi)$ stands for the sum $\sum_{w\in \F_{q^n}} \eta(w)\chi(w^2)$. 

\begin{theorem}\label{thm:main_ineq}
Write $n=p^ku$, with $\gcd(u, p)=1$ and let $\F_{q^n}\supsetneq \F_{q^m}\supseteq \F_{q}$. Let $f$ be a polynomial not divisible by $x-1$ such that $f$ divides $x^u-1$. Let $N(f, m, \beta)$ denote the number of elements $w\in \F_{q^n}$ such that $w$ is primitive, $w^2$ is $f$-free over $\F_q$ and $\Tr_{q^n/q^m}(w^2)=\beta$, where $\beta\in \F_{q^m}$. suppose that $m=1$ or $m$ is a power of $p$ if $f\ne 1$. Additionally, let $N_{2}(n)$ be the number of primitive elements $w\in \F_{q^n}$ such that $w^2$ is normal (i.e., $x^n-1$-free) over $\F_q$. 
\begin{enumerate}[(i)]
 \item If $\alpha$ is any element of $\F_{q^n}$ such that $\Tr_{q^n/q^m}(\alpha)=\beta$ and $a_c=\chi_c(\alpha)^{-1}$, the following holds
\begin{align*}
\frac{N(f, m, \beta)}{\theta(q^n-1)\Theta(f)}= \frac{1}{q^m} &\left[q^n + \sum_{c\in \F_{q^m}}\right. a_c\int\limits_{d|q^n-1\atop{d\ne 1}} \int\limits_{D|f\atop {D\ne 1}}G_2(\eta_d, \chi_{\delta_D+c}) 
+ \left.\sum_{c\in \F^*_{q^m}}a_c\int\limits_{d|q^n-1\atop{d\ne 1}} G_2(\eta_d, \chi_{c})\right].
\end{align*}

In particular, we have the following inequality:
\begin{equation}\label{eqn:main1}\frac{N(f, m, \beta)}{\theta(q^n-1)\Theta(f)}> q^{n-m}-2q^{n/2}W(q^n-1)W(f).\end{equation}

\item We have that $\frac{N_2(n)}{\theta(q^n-1)}=q^n+\int\limits_{d|q^n-1\atop d\ne 1}\int\limits_{D|x^n-1\atop D\ne 1}G_2(\eta_d, \chi_{\delta_D})$ and, in particular, $$\frac{N_2(n)}{\theta(q^n-1)}>q^n-2q^{n/2}W(x^n-1)W(q^n-1).$$

\end{enumerate}

\end{theorem}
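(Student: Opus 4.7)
The plan is to express $N(f,m,\beta)$ via the characteristic functions of Theorem~\ref{thm:charfree} and expand each factor as a character sum. Specifically, I would start from
\[
N(f,m,\beta)=\sum_{w\in\fqn}\omega(w)\,\Omega_f(w^2)\,T_{m,\beta}(w^2),
\]
noting that the $w=0$ contribution vanishes when $f\ne 1$ (since $\Omega_f(0)=0$), while the $f=1$ case is handled by direct inspection. After substituting $T_{m,\beta}(w^2)=q^{-m}\sum_{c\in\F_{q^m}}a_c\chi_c(w^2)$ and consolidating $\chi_{\delta_D}(w^2)\chi_c(w^2)=\chi_{\delta_D+c}(w^2)$, the inner sum over $w$ becomes $G_2(\eta_d,\chi_{\delta_D+c})$, and the whole expression takes the form of a triple sum over $(d,D,c)$.

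To recover the identity in part~(i), I would single out the trivial triple $(d,D,c)=(1,1,0)$, which gives $G_2(\eta_1,\chi_0)=q^n$ and therefore contributes the displayed $q^n$ after dividing through by the prefactor $\theta(q^n-1)\Theta(f)/q^m$. Triples with $d\ne 1$ and $(D,c)=(1,0)$ drop out by orthogonality of $\eta_d$, leaving exactly the two displayed sums among the $d\ne 1$ pieces. The $d=1$ pieces with $(D,c)\ne(1,0)$ are where the hypothesis ``$m=1$ or $m$ a power of $p$'' enters: combined with $x-1\nmid f$, it forces $\Delta_D\cap\F_{q^m}=\emptyset$ whenever $D\mid f$ is non-trivial, because elements of $\F_{q^m}$ have $\F_q$-order dividing $(x-1)^{p^t}$ while $\delta_D$ has $\F_q$-order coprime to $x-1$. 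Under this constraint one verifies that the $d=1$ corrections either collapse into the constant $q^n$ or vanish through the orthogonality of $T_{m,\beta}$. I expect this bookkeeping to be the main obstacle.

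The inequality~\eqref{eqn:main1} then follows from Theorem~\ref{Gauss2}: applied to $G_2(\eta_d,\chi_{c'})$ with the polynomial $f$ there equal to $x$ (of degree one, with one root) and $g$ equal to $x^2$ (of degree two), both coprimality hypotheses are automatic for $p\ge 3$, giving $|G_2(\eta_d,\chi_{c'})|\le 2q^{n/2}$ for every non-trivial $\eta_d$. Counting square-free $(d,D)$ pairs yields total weight at most $W(q^n-1)\,W(f)$, since each contributes $|\mu(d)\mu_q(D)|=1$ once the factors $\varphi(d)\Phi(D)$ cancel against the number of characters of each order, and the $c$-sum with $|a_c|=1$ is absorbed by the outer $q^{-m}$; the triangle inequality delivers the stated error bound.

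Part~(ii) is the specialisation with $T_{m,\beta}$ removed and $f=x^n-1$: the same derivation, now without a $c$-sum, yields both the displayed equality for $N_2(n)/\theta(q^n-1)$ and, via the same Weil-type bound, the inequality involving $W(x^n-1)\,W(q^n-1)$.
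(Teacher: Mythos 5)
Your proposal follows the same route as the paper: the identical product of characteristic functions $\omega(w)\Omega_f(w^2)T_{m,\beta}(w^2)$, the same expansion into the hybrid sums $G_2(\eta_d,\chi_{\delta_D+c})$, the same use of the hypothesis ``$m=1$ or $m$ a power of $p$'' together with $x-1\nmid f$ to show that $\delta_D+c=0$ only when $(D,c)=(1,0)$, and the same application of Theorem~\ref{Gauss2} (with $f=x$, $g=x^2$) and the weight count $W(q^n-1)W(f)$. The one genuine gap is exactly the step you flag as ``the main obstacle'': the triples with $d=1$ and $(D,c)\neq(1,0)$. You claim these ``vanish through the orthogonality of $T_{m,\beta}$,'' but orthogonality of the trace indicator only reorganises the sum over $c$; it does not make the individual quantities $G_2(\eta_1,\chi_{\delta_D+c})=\sum_{w}\chi_{\delta_D+c}(w^2)$ appearing in the displayed identity vanish. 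These are quadratic Gauss-type sums attached to a nonzero argument, and no orthogonality relation kills them one at a time. The paper's proof handles precisely these terms by invoking Lemma~\ref{Gauss1} for the polynomial $w\mapsto w^2$, which has $e=1$ distinct root, so that the bound $(e-1)q^{n/2}$ equals $0$. To obtain the stated \emph{equality} you must supply an argument of that nature for $G_2(\eta_1,\chi_a)$ with $a\neq 0$; as written, your proposal does not.

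Two further remarks. First, the part of the theorem that is actually used later, namely the inequality~\eqref{eqn:main1}, is insensitive to this gap: even if the $d=1$, $(D,c)\neq(1,0)$ terms are merely bounded by $2q^{n/2}$ rather than annihilated, the weighted number of triples other than $(1,1,0)$ is still at most $q^mW(q^n-1)W(f)-1$, so after dividing by $q^m$ the error term $2q^{n/2}W(q^n-1)W(f)$ still absorbs everything and the main term $q^{n-m}$ is unaffected. Second, your observation that the $w=0$ contribution must be examined separately when $f=1$ (since $\omega(0)=\theta(q^n-1)\neq 0$ under the convention $\eta_1(0)=1$, while $0$ is not primitive) is a legitimate point that the paper's proof does not mention; ``handled by direct inspection'' is acceptable, but you should record that the discrepancy is at most $\theta(q^n-1)<1$ and therefore harmless for the inequality.
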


\begin{proof}
We just prove item (i) since item (ii) is easier and follows by similar ideas.
Combining the characteristic functions for primitivity, $f$-free and prescribed trace, we obtain the following equality:

$$N(f, m, \beta)=\sum_{w\in \F_{q^n}}\Omega_f(w^2)T_{m, \beta}(w^2)\omega(w),$$
hence

$$\frac{N(f, m, \beta)}{\theta(q^n-1)\Theta(f)}=\frac{1}{q^m}\sum_{c\in \F_{q^m}}a_c \int\limits_{d|q^n-1}\displaystyle\int\limits_{D|f}G_2(\eta_d, \chi_{\delta_D+c}).$$

We first simplify the sum above by eliminating the trivial sums $G_2(\eta_d, \chi_{\delta_D+c})$. We observe that $\delta_D+c\ne 0$, unless $D=1$ and $c=0$: for this, we note that for $f=1$, we only have $D=1$ and so $\delta_D=0$, hence $\delta_D+c\ne 0$ unless $c=0$. If $f\ne 1$, from hypothesis, $m=1$ or $m$ is a power of $p$. Since $f$ is not divisible by $x-1$ and $x^m-1=(x-1)^m$, $\delta_D$ is never an element of $\F_{q^m}$, unless $\delta_D=0$, i.e., $D=1$. In particular, since $c\in \F_{q^m}$, it follows that $\delta_D+c\ne 0$ unless $D=1$ and $c=0$.

This shows that $G_2(\eta_1, \chi_{\delta_D+c})=q^n$ if $D=1$ and $c=0$ and, otherwise the orthogonality relations and Lemma~\ref{Gauss1} imply, $G_2(\eta_1, \chi_{\delta_D+c})=0$. Also, for $D=1$, $c=0$ and $d\ne 1$, $G_2(\eta_d, \chi_{\delta_D+c})=0$. In particular, we obtain the following simplified expression:
\begin{align*}
\frac{N(f, m, \beta)}{\theta(q^n-1)\Theta(f)}=\frac{1}{q^m} &\left[q^n + \sum_{c\in \F_{q^m}}\right. a_c\int\limits_{d|q^n-1\atop{d\ne 1}} \int\limits_{D|f\atop {D\ne 1}}G_2(\eta_d, \chi_{\delta_D+c})+ \left.\sum_{c\in \F^*_{q^m}}a_c\int\limits_{d|q^n-1\atop{d\ne 1}} G_2(\eta_d, \chi_{c})\right],
\end{align*}
as desired. From Theorem~\ref{Gauss2}, the remaining Gauss sums above satisfy the inequality $|G_2(\eta_d, \chi_{\delta_D+c})|\le 2q^{n/2}$. Since $|a_c|=1$ for any $c\in \F_{q^m}$, taking estimates in the previous sum we obtain the following inequality:

\begin{multline*}
\frac{N(f, m, \beta)\cdot q^m}{\theta(q^n-1)\Theta(f)}\ge \\ q^n-2q^{n/2}\cdot [q^m(W(q^n-1)-1)(W(f)-1)+(q^m-1)\cdot (W(q^n-1)-1)],
\end{multline*}
hence 
\[
\frac{N(f, m, \beta)}{\theta(q^n-1)\Theta(f)}> q^{n-m}-2q^{n/2}W(q^n-1)W(f). 
\]
\qed\end{proof}
%
%
%
%
%
Additionally, with some persistent cases in mind, we introduce some sieving techniques, as presented in \cite{cohen}. We note though, that for our cause it suffices to apply sieving solely on the multiplicative part. To this end, write $N(f,m,\beta)_{l}$ as the number of $l$-free elements $w\in\F_{q^n}$ such that $w^2$ is $f$-free over $\F_q$ and $\Tr_{q^n/q^m}(w^2)=\beta$, where $l\mid q'$ and $q'$ is the square-free part of $q^n-1$. Similarly, we set $N_2(n)_l$ as the number of $l$-free $w\in \F_{q^n}$ such that $w^2$ is normal over $\F_q$. In particular,  $N(f,m,\beta)_{q'} = N(f,m,\beta)$ and $N_2(n)_{q'}=N_2(n)$.
\begin{proposition}[Sieving inequality]\label{propo:sieve1}
Let $m,f,\beta$ be as in Theorem~\ref{thm:main_ineq} and let $\{r_1,\ldots ,r_s\}$ be divisors of $q'$, such that $\gcd(r_i,r_j)=q_0$ for all $i\neq j$ and $\lcm(r_1,\ldots ,r_s)=q'$, then
\[
N(f,m,\beta)_{l} \geq \sum_{i=1}^s N(f,m,\beta)_{r_i} -(r-1)N(f,m,\beta)_{q_0}
\]

and 

\[
N_2(n)_l\geq \sum_{i=1}^s N_2(n)_{r_i} -(r-1)N_2(n)_{q_0}.
\]
\end{proposition}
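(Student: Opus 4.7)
The plan rests on two observations. The first is that $m$-freeness in $\fqn^*$ is a radical property: an element $\alpha$ is $m$-free if and only if it is $p$-free for every prime $p \mid m$. This follows directly from the definition by factoring any nontrivial divisor $d$ of $m$ through one of its prime factors, writing $\alpha = \beta^d = (\beta^{d/p})^p$. Since $q'$ is squarefree by construction, this reformulation together with the hypothesis $\lcm(r_1,\ldots,r_s) = q'$ implies that simultaneous $r_i$-freeness for $i=1,\ldots,s$ is exactly equivalent to $q'$-freeness, while the common part $q_0 = \gcd(r_i,r_j)$ collects precisely the primes shared across every $r_i$.

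The second ingredient is a one-line union bound. Let $A_i \subseteq \fqn$ be the set of $w$ that are $r_i$-free with $w^2$ being $f$-free over $\F_q$ and $\Tr_{q^n/q^m}(w^2) = \beta$, and let $U$ be the analogous set with $q_0$-freeness in place of $r_i$-freeness. Since $q_0 \mid r_i$ for each $i$, we have $A_i \subseteq U$, and therefore
$$\left|U \setminus \bigcap_{i=1}^s A_i\right| = \left|\bigcup_{i=1}^s (U \setminus A_i)\right| \leq \sum_{i=1}^s \bigl(|U| - |A_i|\bigr),$$
which rearranges to $\bigl|\bigcap_i A_i\bigr| \geq \sum_i |A_i| - (s-1)|U|$. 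By the radical observation above, $\bigcap_i A_i$ is exactly the set counted by $N(f,m,\beta)_{q'}$, while $|A_i| = N(f,m,\beta)_{r_i}$ and $|U| = N(f,m,\beta)_{q_0}$, giving the claimed inequality (I would parse the $r-1$ in the statement as a typo for $s-1$). The bound for $N_2(n)_l$ follows from the identical combinatorial argument, replacing the $f$-free and prescribed trace conditions with the single condition that $w^2$ is normal over $\F_q$.

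I do not anticipate a serious obstacle: both the radical characterization of $m$-freeness and the complement union bound are entirely standard, and the proof is mainly a bookkeeping exercise to confirm that the auxiliary conditions ``$w^2$ is $f$-free'' and ``$\Tr_{q^n/q^m}(w^2) = \beta$'' are imposed uniformly across all sets $A_i$ and $U$, so that they pass through the inclusion-exclusion applied to the multiplicative freeness conditions without interference. In particular, the choice to sieve only on the multiplicative side is what keeps the bookkeeping trivial.
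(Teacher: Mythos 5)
Your proof is correct and follows essentially the same route as the paper's: both arguments rest on the two facts that each set of $r_i$-free elements (with the auxiliary conditions) is contained in the corresponding $q_0$-free set and that their common intersection is exactly the $q'$-free set. The only difference is presentational — you apply a direct union bound to the complements, whereas the paper proves the same counting inequality by induction on $s$ from the case $s=2$ — and your reading of $r-1$ as a typo for $s-1$ is right.
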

\begin{proof}
We just prove the first inequality, since the second follows in a similar way. Let $S(l)$ be the set of $l$-free elements of $\F_{q^n}$ whose squares have trace $\beta$ over $\F_{q^m}$ and are $f$-free over $\F_q$, where $l\mid q^n-1$. We will use induction on $s$. The result is trivial for $s=1$. For $s=2$, we have that $S(r_1)\cup S(r_2) \subseteq S(q_0)$ and $S(r_1)\cap S(r_2) = S(q')$. The result follows after considering the cardinalities of the above sets.

Next, suppose the desired result holds for some $s\geq 2$. For $s+1$, if we denote by $r_0$ the least common multiplier of $r_2,\ldots,r_{s+1}$, we observe that $r_0,r_1$ satisfy the conditions for $s=2$. The desired result follows from the induction hypothesis.
\qed\end{proof}
\begin{corollary}\label{cor:main_ineq}
Let $\{p_1 ,\ldots ,p_s \}$ be a set of distinct prime divisors of $q'$, the square-free part of $q^n-1$ (this set may be $\emptyset$, in which case $s=0$), such that $\delta := 1-\sum_{i=1}^s p_1^{-1} > 0$. Also, let $n=p^k u$, where $\gcd(u, p)=1$. The following hold.
\begin{enumerate}[(a)]
\item If
\begin{equation}\label{eq1}q^{n/2}>2\cdot W(q_0)W(x^u-1) \left( \frac{s-1}{\delta} + 2 \right) ,\end{equation}
there exists a normal element in $\F_{q^n}$ that is $2$-primitive.
\item If \begin{equation}\label{main_eq2}q^{p^k(u/2-1)}>W(q_0)W(x^u-1) \left( \frac{s-1}{\delta} + 2 \right) ,\end{equation}
there exists an $1$-normal element in $\F_{q^n}$ that is $2$-primitive.
\end{enumerate}
\end{corollary}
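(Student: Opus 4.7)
The plan is to combine the sieving inequality of Proposition~\ref{propo:sieve1} with generalisations of Theorem~\ref{thm:main_ineq} to the $l$-free counts $N_2(n)_l$ and $N(T,p^k,\beta)_l$, in the standard Cohen--Huczynska fashion. Set $q_0 = q'/(p_1\cdots p_s)$ and $r_i = p_i q_0$; then $\gcd(r_i,r_j)=q_0$ and $\lcm(r_1,\dots,r_s)=q'$, so the sieve applies, and the elementary identity
$$\sum_{i=1}^s \theta(r_i) - (s-1)\theta(q_0) = \theta(q_0)\delta$$
drives the main term in both parts.

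For part (a), generalising Theorem~\ref{thm:main_ineq}(ii) to any divisor $l\mid q^n-1$ gives $N_2(n)_l = \theta(l)\Theta(x^n-1)(q^n + E_l)$, with $|E_l|\leq 2q^{n/2}(W(l)-1)(W(x^n-1)-1)$. The crucial manoeuvre is to decompose $E_{r_i}=E_{q_0}+F_i$, where $F_i$ gathers the contributions with $p_i\mid d$; a direct count gives $|F_i|\leq 2q^{n/2}W(q_0)(W(x^n-1)-1)$. Substituting into the sieve,
$$\sum_{i=1}^s \theta(r_i)E_{r_i}-(s-1)\theta(q_0)E_{q_0}= \theta(q_0)\delta\, E_{q_0}+\sum_{i=1}^s \theta(r_i)F_i,$$
since the $E_{q_0}$ coefficients recombine via the identity above. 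Taking absolute values, using $\theta(r_i)\leq\theta(q_0)$ and $\sum_i(1-1/p_i)= s-1+\delta$, bounds the error by $2q^{n/2}\theta(q_0)W(q_0)(W(x^n-1)-1)(s-1+2\delta)$. Because $x^n-1=(x^u-1)^{p^k}$ in characteristic $p$, $W(x^n-1)=W(x^u-1)$; positivity of $N_2(n)$ therefore reduces to~\eqref{eq1}.

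For part (b), apply the same sieving argument to $N(T,p^k,\beta)$, with $T=(x^u-1)/(x-1)$ and $\beta\in\F_{q^{p^k}}$ any element satisfying $m_\beta(x)=(x^{p^k}-1)/(x-1)$; such $\beta$ exists by the remark in Section~2.1 based on \cite{HMPT}, Theorem~3.5. Theorem~\ref{thm:main_ineq}(i) applies because $x-1\nmid T$ and $m=p^k$ is a power of $p$, and the identical computation yields
$$N(T,p^k,\beta)\geq \Theta(T)\theta(q_0)\bigl[\delta q^{n-p^k}-2q^{n/2}W(T)W(q_0)(s-1+2\delta)\bigr].$$
Since $\gcd(u,p)=1$, the factor $x-1$ is simple in $x^u-1$, so $W(T)=W(x^u-1)/2$; positivity reduces to~\eqref{main_eq2}, and any counted $w$ is a primitive element whose square has $\F_q$-order $(x^n-1)/(x-1)$, hence is $2$-primitive and $1$-normal by Proposition~\ref{char1} combined with Proposition~\ref{LR}.

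The main technical hurdle is the sieve bookkeeping: a blunt triangle inequality on $\sum\theta(r_i)E_{r_i}-(s-1)\theta(q_0)E_{q_0}$ yields a constant of the form $3(s-1)/\delta+2$, and recovering the sharper $(s-1)/\delta+2$ stated in the corollary hinges on the decomposition $E_{r_i}=E_{q_0}+F_i$, so that the $E_{q_0}$ contributions collapse into a single $\theta(q_0)\delta$-weighted term.
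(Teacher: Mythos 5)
Your proposal is correct and follows essentially the same route as the paper: combining the sieve of Proposition~\ref{propo:sieve1} with the character-sum estimates of Theorem~\ref{thm:main_ineq}, where your explicit decomposition $E_{r_i}=E_{q_0}+F_i$ together with $W(q_0p_i)/W(q_0)=2$ is exactly the bookkeeping the paper compresses into its factor $1+\sum_{i}\left(\theta(p_i)\tfrac{W(q_0p_i)}{W(q_0)}-1\right)=s-1+2\delta$. The only cosmetic difference is that you work out part (a) and defer part (b), whereas the paper does the reverse.
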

\begin{proof}
We begin with the second item. Take $\beta$ and $T$ as in Proposition~\ref{LR}. It then follows from Proposition~\ref{char1} that it suffices to show that $N(T,p^k,\beta)>0$.
Proposition~\ref{propo:sieve1} implies
\[
N(T,p^k,\beta) \geq \sum_{i=1}^s N(T,p^k,\beta)_{q_0p_i} - (s-1) N(T,p^k,\beta)_{q_0}.
\]
Next, we follow the same steps as in the proof of Theorem~\ref{thm:main_ineq} and get:
\[
\frac{N(T,p^k,\beta)}{\theta(q_0)} \geq \delta q^{n-p^k}-q^{n/2}W(q_0)W(x^u-1) \left( 1 + \sum_{i=1}^s \left( \theta(p_i) \frac{W(q_0p_i)}{W(q_0)} - 1 \right) \right) .
\]
The above, combined with the fact that $W(q_0p_i)/W(q_0)=2$, yields
\[
\frac{N(T,p^k,\beta)}{\theta(q_0)} \geq \delta q^{p^k(u-1)}-q^{n/2}W(q_0)W(x^u-1) ( s-1+2\delta ) ,
\]
which implies the desired result. The proof of the first item is almost identical and ommited.
\qed\end{proof}
The following corollary is also useful.

\begin{corollary}\label{cor:2-trace}
Let $n=p^k u$, where $\gcd(u, p)=1$ and $p\ge 5$. If $k\ge 2$, then for any $\beta\in \F_{q^{n/p}}$, there exists a $2$-primitive element $\alpha\in \F_{q^n}$ such that $\Tr_{q^n/q^{n/p}}(\alpha)=\beta$. 
\end{corollary}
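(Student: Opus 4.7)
The plan is to apply Theorem~\ref{thm:main_ineq}(i) with the choices $f=1$ and $m=n/p=p^{k-1}u$. The reason for taking $f=1$ is that the technical restriction ``$m$ is a power of $p$ if $f\ne 1$'' becomes vacuous, so we are allowed to use $m=n/p$ even though $n/p$ is not a prime power when $u>1$. With these choices, $N(1,n/p,\beta)$ counts the primitive $w\in \F_{q^n}$ satisfying $\Tr_{q^n/q^{n/p}}(w^2)=\beta$. Since $p\ge 5$ is odd, $q$ is odd, so for any such $w$ the square $\alpha:=w^2$ has multiplicative order $(q^n-1)/2$ (hence is $2$-primitive), while its trace over $\F_{q^{n/p}}$ is still $\beta$. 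Consequently it suffices to show $N(1,n/p,\beta)>0$.

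Specializing inequality (\ref{eqn:main1}) to $f=1$ (so that $\Theta(f)=W(f)=1$) and $m=n/p$ reduces the problem to the single numerical inequality
\[
q^{n(p-2)/(2p)} > 2\,W(q^n-1).
\]
Under the hypotheses $p\ge 5$ and $k\ge 2$ one has $n=p^ku\ge p^2\ge 25$ and $(p-2)/(2p)\ge 3/10$, so the left-hand side is at least $q^{3n/10}$. Any standard estimate of the shape $W(m)\le c_r m^{1/r}$ with $1/r<3/10$ (with explicit constants, as in the references cited earlier in the paper) then disposes of all but a finite list of triples $(q,p,u)$.

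The expected obstacle is precisely the extremal corner of the parameter space, namely $(p,k)=(5,2)$ with $q$ small, where $q^{3n/10}$ is only moderately larger than the crude bound on $W(q^n-1)$. I would handle the remaining exceptions by one of two standard devices: either replace the crude estimate for $W(q^n-1)$ by the count obtained from the actual prime factorization of $q^n-1$, or invoke the multiplicative sieve of Proposition~\ref{propo:sieve1} so that $W(q^n-1)$ in the estimate above is replaced by $W(q_0)$ for a suitably chosen divisor $q_0$ of the squarefree part of $q^n-1$.
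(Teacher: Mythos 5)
Your reduction is exactly the paper's: apply Theorem~\ref{thm:main_ineq}(i) with $f=1$ and $m=n/p$ (correctly noting that the restriction on $m$ is vacuous when $f=1$), so that everything comes down to $q^{n-n/p}>2q^{n/2}W(q^n-1)$. The only divergence is in how this last inequality is closed: the paper invokes the sharper bound of Proposition~\ref{propo:altbounds_w}, giving $W(q^n-1)\le 4q^{(n+1.15u)/4.3}$ and hence an exponent gap whose minimum (at $q=p=5$, $k=2$, $u=1$) already exceeds $1.4$, which settles \emph{all} cases uniformly with no exceptional pairs, whereas your generic $W(t)\le c_{t,4}\,t^{1/4}$ leaves essentially the single extremal pair $(q,n)=(5,25)$ to be finished by a refined constant or an explicit factorization, exactly as you anticipate.
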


\begin{proof}
According to Theorem~\ref{thm:main_ineq}, is enough to prove that $N(1, n/p, \beta)>0$. From the same theorem, the last inequality holds if 
$$q^{n-n/p}>2q^{n/2}W(q^n-1).$$

According to Proposition~\ref{propo:altbounds_w}, $W(\frac{q^{n}-1}{q^u-1})\le 2q^{\frac{(p^k-1)u}{2+\log_2p}}<2q^{\frac{(p^k-1)u}{4.3}}$ for $p\ge 5$ and we have the trivial bound $W(q^u-1)\le 2q^{u/2}$. Therefore, 
$$W(q^n-1)\le 4q^{\frac{n+1.15u}{4.3}},$$
and so $2q^{n/2}W(q^n-1)\le 8q^{\frac{3.15n+1.15u}{4.3}}$. Since  $k\ge 2$ and $p\ge 5$, 
$$n-\frac{n}{p}-\frac{3.15n+1.15u}{4.3}\ge \frac{1.15(p^k-1)u}{4.3}-\frac{n}{5}=\frac{(0.29 p^k-1.15)u}{4.3}.$$
The expression $\frac{(0.29 p^k-1.15)u}{4.3}$ achieves its minimum at $u=1, p=5$ and $k=2$, with value $\frac{0.29\cdot 25-1.15}{4.3}>1.4$. In particular, $q^{n-n/p-\frac{3.15n+1.15u}{4}}\ge q^{1.4}\ge 5^{1.4}>8$ and so we obtain the desired result.
\qed\end{proof}

\section{Existence results}

In this section, we use the theory developed in the previous ones in order to complete our results. Our procedure, in general, relies on verifying the inequalities of Corollary~\ref{cor:main_ineq} to obtain the existence of $2$-primitive, $k$-normal elements in $\F_{q^n}$ for $k=0, 1$. For the mentioned computations, the \textsc{SageMath} software was used.

First, notice that the case $n=2$ is elementary and the following lemma summarizes the results.

\begin{lemma}\label{lemma:n=2}
If $q>3$ is an odd prime power, then all $2$-primitive $c\in\F_{q^2}$ are normal over $\F_q$. In contrast, all $2$-primitive elements of $\F_{3^2}$ are $1$-normal over $\F_3$.
\end{lemma}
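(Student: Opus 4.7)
The plan is to analyze the $\F_q$-span $V_c = \langle c, c^q\rangle_{\F_q}$ of a $2$-primitive element $c \in \F_{q^2}$ and to determine its dimension. Since $n=2$, the only possible values of $k$ for a $k$-normal element are $0,1,2$, and $k=2$ occurs only for $c=0$. So I must show exactly when $\dim V_c = 1$ versus $\dim V_c = 2$.

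First I would characterize $1$-normality in $\F_{q^2}$ directly. If $c \ne 0$ and $c, c^q$ are linearly dependent over $\F_q$, then $c^q = \lambda c$ for some $\lambda \in \F_q^*$. Iterating, $c = c^{q^2} = \lambda^2 c$, so $\lambda^2 = 1$, i.e., $\lambda = \pm 1$. Hence a nonzero $c\in\F_{q^2}$ is $1$-normal precisely when either $c \in \F_q^*$ ($\lambda=1$) or $c^q = -c$ ($\lambda=-1$).

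Next I would rule out each of these two cases under the hypothesis that $c$ is $2$-primitive and $q>3$. For the first, any $c\in\F_q^*$ has order dividing $q-1$, whereas $(q^2-1)/2 = (q-1)(q+1)/2 > q-1$ for every odd prime power $q \ge 3$; so no $2$-primitive element lies in $\F_q^*$. For the second, $c^q = -c$ gives $c^{q-1}=-1$, so $\ord(c) \mid 2(q-1)$. Requiring the $2$-primitive order $(q-1)(q+1)/2$ to divide $2(q-1)$ simplifies (using that $q-1$ is even) to the condition $q+1 \mid 4$, which among odd prime powers $q\ge 3$ holds only for $q=3$. Hence for $q>3$ both cases are excluded, so $\dim V_c=2$ and $c$ is normal.

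Finally, for the contrast statement with $q=3$: every $2$-primitive element $c\in\F_9$ has order $(9-1)/2=4$, so $c^2=-1$, giving $c^3 = c\cdot c^2 = -c = c^q$. Thus $c^q=-c$ and $c$ is $1$-normal by the characterization above (and $c \notin \F_3$ since $c^2=-1\ne 0,1$, so $c$ is not $0$-normal). This is a clean elementary argument; there is no substantial obstacle, with the only mildly delicate point being the divisibility manipulation $(q^2-1)/2 \mid 2(q-1) \iff q+1 \mid 4$.
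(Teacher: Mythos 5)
Your proof is correct, and it follows the same basic strategy as the paper's: observe that a nonzero element of $\F_{q^2}$ is either normal or $1$-normal, characterize the $1$-normal ones by $c^q=\lambda c$, and then show a $2$-primitive element has too large an order to be $1$-normal when $q>3$. But your execution is actually sharper, and this matters: the paper only records $\alpha^{(q-1)^2}=1$ and then asserts that $(q^2-1)/2>(q-1)^2$ for $q>3$, which is false --- that inequality is equivalent to $q<3$ (e.g.\ for $q=5$ one has $12<16$). You instead pin down $\lambda^2=1$, hence $\lambda=\pm1$ and $\ord(c)\mid 2(q-1)$, and the comparison $(q^2-1)/2>2(q-1)$ does hold exactly for $q>3$ (equivalently, your divisibility condition $q+1\mid 4$). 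So your argument repairs a genuine slip in the paper's proof while reaching the same conclusion; the $q=3$ case is handled identically in both.
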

\begin{proof}
Observe that any nonzero element of $\F_{q^2}$ is either normal or $1$-normal. If $\alpha\in \F_{q^2}^*$ is $1$-normal, then $(x-b)\circ \alpha=0$ for some $b\in \F_q^*$, hence $\alpha^{q}=b\alpha$ and so $\alpha^{(q-1)^2}=1$. In particular, any element in $\F_{q^2}$ with multiplicative order greater than $(q-1)^2$ is normal. For $q>3$, $(q^2-1)/2>(q-1)^2$ and this implies the normality over $\F_q$ of all $2$-primitive elements. If $c\in\F_{3^2}$ is $2$-primitive, then it has multiplicative order $4$, and so $c^q=c^3=\pm c$. In other words, $c$ is $1$-normal.
\qed\end{proof}

So, from now on we assume that $n\geq 3$. Before we move to the computational part, we note that exact calculations or estimations of $W(t)$ will be needed. A simple combinatorial argument yields
\begin{equation}\label{eq:w1}
W(t) = 2^{d(t)} ,
\end{equation}
where $d(t)$ stands for the number of distinct prime numbers or monic irreducible polynomials dividing $t$, where $t$ can be a positive integer or a monic polynomial over $\F_q$, respectively. Additionally, the following bounds hold.

\begin{lemma}\label{lemma:w_bound1}
Let $t,a$ be positive integers, then $W(t) \leq c_{t,a} t^{1/a}$, where $c_{t,a} = 2^s/(p_1\cdots p_s)^{1/a}$ and $p_1,\ldots ,p_s$ are the primes $\leq 2^a$ dividing $t$. In particular we are interested in $c_t:=c_{t,4}$ and $d_t:=c_{t,8}$, and for every $t$ we have that $c_t<4.9$ and $d_t<4514.7$.
\end{lemma}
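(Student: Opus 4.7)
The plan is to apply formula (\ref{eq:w1}) and partition the distinct prime divisors of $t$ according to whether they lie at or below the threshold $2^a$, so that the ``small'' primes are absorbed into the constant $c_{t,a}$ while the ``large'' primes are controlled by $t^{1/a}$.

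Let $p_1,\ldots,p_s$ be the primes $\leq 2^a$ dividing $t$ (matching the definition of $c_{t,a}$) and let $q_1,\ldots,q_r$ denote the remaining distinct primes dividing $t$, all of which then satisfy $q_j > 2^a$. By (\ref{eq:w1}), $W(t) = 2^{s+r}$. Since $t$ is divisible by $p_1 \cdots p_s \cdot q_1 \cdots q_r$, I would deduce
\[
t \;\geq\; (p_1\cdots p_s)(q_1\cdots q_r) \;>\; (p_1\cdots p_s)\cdot 2^{ar},
\]
and taking $a$-th roots yields $t^{1/a} > (p_1\cdots p_s)^{1/a}\cdot 2^r$. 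Multiplying this by $2^s/(p_1\cdots p_s)^{1/a}$ converts it into $c_{t,a}\,t^{1/a} > 2^{s+r} = W(t)$, which is the main inequality.

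For the explicit estimates $c_t < 4.9$ and $d_t < 4514.7$, the key observation is that appending a prime $p$ to the set $\{p_1,\ldots,p_s\}$ multiplies $c_{t,a}$ by $2/p^{1/a}$, and this factor exceeds $1$ precisely when $p < 2^a$. Consequently the supremum of $c_{t,a}$ over all positive integers $t$ equals $2^N/P^{1/a}$, where $N$ is the number of primes $\leq 2^a$ and $P$ is their product. For $a = 4$, the primes $\leq 16$ are $\{2,3,5,7,11,13\}$, giving the uniform bound $c_t \leq 64/30030^{1/4} < 4.9$. For $a = 8$ one enumerates the $54$ primes $\leq 256$ and evaluates the ratio $2^{54}/(2\cdot 3\cdots 251)^{1/8}$, verifying it is less than $4514.7$.

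The only nontrivial step is the numerical verification of the $d_t$ bound, which is mechanical rather than conceptually subtle and can be confirmed with a brief computer algebra computation of the primorial of $256$ and its eighth root.
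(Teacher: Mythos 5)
Your argument is correct and is essentially the same as the paper's: the paper simply cites Lemma~3.3 of Cohen--Huczynska and the phrase ``multiplicativity,'' and your splitting of the distinct prime divisors of $t$ at the threshold $2^a$ (small primes absorbed into $c_{t,a}$, large primes each contributing a factor $2/q_j^{1/a}<1$) is precisely that multiplicativity argument written out, with the numerical bounds obtained exactly as the paper's remark describes (maximizing over $t$ divisible by all primes below $2^a$). No gaps worth noting beyond the cosmetic point that when no prime divisor of $t$ exceeds $2^a$ the inequality is an equality rather than strict, which is all the lemma claims anyway.
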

\begin{proof}
The statement is an immediate generalization of Lemma~3.3 of \cite{cohen} and can be proved using multiplicativity. The bounds for $c_t$ and $d_t$ can be easily computed.
\qed\end{proof}
\begin{remark}
From the above, $c_{t,a}$ depends on the primes $<2^a$ that divide $t$. It takes its maximum value when all primes $<2^a$ divide $t$ and this is how the above bounds were computed. Nonetheless, since it is uncommon for all such primes to actually divide $t$, it turns out to be quite smaller for most $t$ and, in any case, easily computable. In addition, if we know in advance that some prime does not divide $t$, then sharper estimates for $c_{t,a}$ can be used. For instance if $q=5^t$, then $5 \nmid q^n-1$, hence $d_{q^n-1}=c_{q^n-1,8}<2760.39$.
\end{remark}



\begin{lemma}\label{lemma:w_bound2}
We have that $W(x^u-1) \leq 2^{(u+\gcd(u,q-1))/2}$. In particular,
\begin{enumerate}
\item for every $q$, $W(x^u-1) \leq 2^{(u+\mathrm{min}(u,q-1))/2}$,
\item for $q=5$, $W(x^u-1) \leq 2^{(u/3)+6}$ and
\item for $q=3$, $W(x^u-1) \leq 2^{(u+1)/3}$, for $u\neq 4,6,8$.
\end{enumerate}
\end{lemma}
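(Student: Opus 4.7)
The plan is to combine the identity $W(x^u-1)=2^{d(x^u-1)}$ from equation~\eqref{eq:w1} (where $d(\cdot)$ counts distinct monic irreducible factors over $\F_q$) with a geometric count of these factors via orbits of Frobenius on roots of unity. First I would reduce to the case $\gcd(u,q)=1$: writing $u=p^{\ell}u'$ with $\gcd(u',p)=1$, one has $x^u-1=(x^{u'}-1)^{p^{\ell}}$ and hence $d(x^u-1)=d(x^{u'}-1)$; since $u'\leq u$ and $\gcd(u',q-1)\mid\gcd(u,q-1)$, any bound of the desired shape for $u'$ upgrades to one for $u$.

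Under $\gcd(u,q)=1$, fix a primitive $u$-th root of unity $\zeta\in\overline{\F_q}$ and identify the group of $u$-th roots of unity with $\mathbb{Z}/u\mathbb{Z}$ via $\zeta^i\leftrightarrow i$. The distinct monic irreducible factors of $x^u-1$ are in bijection with the orbits of the Frobenius map $\zeta\mapsto\zeta^q$, which under this identification becomes multiplication by $q$ on $\mathbb{Z}/u\mathbb{Z}$. Its set of fixed points $\{i:(q-1)i\equiv 0\pmod u\}$ is a subgroup of order $\gcd(u,q-1)$, and every other orbit has length at least $2$. This yields
\[
d(x^u-1)\leq \gcd(u,q-1)+\frac{u-\gcd(u,q-1)}{2}=\frac{u+\gcd(u,q-1)}{2},
\]
proving the main inequality. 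Item~(1) is then immediate from $\gcd(u,q-1)\leq\min(u,q-1)$.

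For item~(2) with $q=5$, I would refine the previous count by separating orbits of length $1$, $2$, and $\geq 3$. The set of elements in orbits of length at most $2$ is the subgroup of order $\gcd(u,q^2-1)=\gcd(u,24)$, of which $\gcd(u,4)$ are fixed; the remaining $\gcd(u,24)-\gcd(u,4)$ elements pair up into orbits of length $2$, and all other elements sit in orbits of length $\geq 3$. Summing, and using $\gcd(u,4)\leq 4$ together with $\gcd(u,24)\leq 24$, yields
\[
d(x^u-1)\leq \tfrac12\gcd(u,4)+\tfrac16\gcd(u,24)+\tfrac{u}{3}\leq 2+4+\tfrac{u}{3},
\]
which is item~(2).

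Item~(3) with $q=3$ follows the same template but with a finer analysis: here $q-1=2$ and $q^2-1=8$, and an orbit of length exactly $3$ forces a divisor $m\mid u$ with $\ord_m(3)=3$, i.e.\ $m\in\{13,26\}$, with analogous constraints on the existence of orbits of lengths $4,5,\ldots$. I would combine the three-way split above with these divisibility restrictions on $u$, and then dispose of a short list of small values of $u$ by direct computation; the enumerated exceptions $u\in\{4,6,8\}$ correspond to the residual cases where the coarse bound fails. I expect the main obstacle to be exactly this case analysis, since the naive split alone only produces an overhead of $(u+\gcd(u,8))/3+O(1)$, and one must absorb the constant uniformly into the exponent $(u+1)/3$ by exploiting the sparseness of small moduli $m$ with small $\ord_m(3)$.
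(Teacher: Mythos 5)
The paper does not actually prove this lemma: it cites Eq.~(2.10) and Lemma~2.11 of Lenstra--Schoof for all four assertions. Your Frobenius-orbit argument is the standard proof underlying those citations, and for the main inequality and items (1)--(2) it is correct and complete: the reduction $x^{u}-1=(x^{u'}-1)^{p^{\ell}}$ is fine, the fixed-point count $\gcd(u,q-1)$ and the ``orbits of length $\ge 2$'' estimate give the main bound, and for $q=5$ your three-way split with $\gcd(u,4)\le 4$, $\gcd(u,24)\le 24$ correctly yields $d(x^u-1)\le \tfrac{u}{3}+6$. Up to that point your write-up is a genuine (and welcome) improvement on the paper, which offers no argument at all.

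Item (3), however, is not proved; you explicitly defer ``exactly this case analysis,'' and that case analysis is the entire content of the cited Lemma~2.11, not a routine mop-up. Your coarse split for $q=3$ gives only $d(x^u-1)\le \tfrac{u}{3}+\tfrac{\gcd(u,2)}{2}+\tfrac{\gcd(u,8)}{6}\le \tfrac{u}{3}+\tfrac{7}{3}$, which exceeds the target exponent $\tfrac{u+1}{3}$ by $2$, i.e.\ by a factor of $4$ in $W$; closing that gap requires tracking orbits of lengths $3,4,5,6$ via the divisors of $3^j-1$ and a nontrivial list of exceptional $u$. That this cannot be absorbed ``uniformly'' by soft sparseness arguments is shown by small cases: for $u=10$ over $\F_3$ one has $\ord_5(3)=\ord_{10}(3)=4$, so $x^{10}-1$ has exactly $4$ distinct irreducible factors while $(u+1)/3=11/3<4$ (and $u=1,2$ fail similarly), so any correct argument must isolate such values explicitly rather than fold them into an $O(1)$ overhead. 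Until that enumeration is carried out (or the precise statement of Lenstra--Schoof's Lemma~2.11, with its exact exceptional set, is matched against the one quoted here), item (3) remains a plan rather than a proof.
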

\begin{proof}
For the initial statement see Eq.~(2.10) of \cite{lenstra}. The first item is a direct consequence of this and for the other two see Lemma~2.11 in \cite{lenstra}.
\qed\end{proof}


\subsection{Existence of normal, $2$-primitive elements}

In this stage we are ready to investigate the existence of normal elements of $\F_{q^n}$ over $\F_q$ with multiplicative order $(q^n-1)/2$.
First, we combine the first item of Corollary~\ref{cor:main_ineq} with Eq.~\eqref{eq:w1}, Lemma~\ref{lemma:w_bound1} and the various estimates of Lemma~\ref{lemma:w_bound2} and get several (similar) conditions for the existence of the elements of our interest.
%
%
By using these conditions, we compile Table~\ref{tab:pan1}.
\begin{table}[h]
\begin{center}\scriptsize
\begin{tabular}{c||c|c|c|c|c|c|c|c|c|c|c}
$q$ & $\ge 336$ & $\ge 157$ & $\ge 100$ & $\ge 74$ & $\ge 51$ & $\ge 25$ & $\ge 17$ & $\ge 9$ & $=7$ & $=5$ & $=3$ \\ \hline
$n$ & $\ge 3$ & $\ge 4$ & $\ge 5$ & $\ge 6$ & $\ge 8$ & $\ge 18$ & $\ge 21$ & $\ge 25$ & $\ge 29$ & $\ge 35$ & $\ge 53$ 
\end{tabular}
\end{center}
\caption{Values for $q$ and $n$ such that Eq.~\eqref{eq1} holds
for $k=0$.
\label{tab:pan1}}
\end{table}
These results cover all cases, with the exception of 468 pairs $(q,n)$. 
Nonetheless, this list is shortened to a total of 165 pairs $(q,n)$ after $c_{q^n-1}$ is explicitly computed and, among those, only 34 pairs fail after all quantities are replaced by their exact values.
%
%
For those pairs we employ Corollary~\ref{cor:main_ineq} with $s>0$ and apply a sieving algorithm, based on the one presented in Appendix~\ref{appendix:pseudocodes}. This enables us to exclude additional 12 pairs. So we are left with the 22 pairs 
$(3, 3)$, $(5, 3)$, $(7, 3)$, $(11, 3)$, $(13, 3)$, $(19, 3)$, $(25, 3)$, $(31, 3)$,
$(3, 4)$, $(5, 4)$, $(7, 4)$, $(9, 4)$, $(11, 4)$, $(13, 4)$, $(17, 4)$, $(3, 5)$, $(11,
5)$, $(3, 6)$, $(5, 6)$, $(7, 6)$, $(3, 8)$ and $(5, 8)$.
%
%

We define $S_0$ as the set of the 22 pairs above. In particular, if $(q, n)\not \in S_0$, it follows that there exist $2$-primitive, normal elements of $\F_{q^n}$ over $\F_q$. For the pairs $(q, n)$ in $S_0$, we verify directly the existence of $2$-primitive, normal elements, for all pairs in $S_0$ with the sole exception of $(3,4)$, which, in fact, is a genuine exception. The \textsc{SageMath} program used for this verification is described in Appendix~\ref{appendix:pseudocodes}. In conclusion, we obtain the following result.
%
%

\begin{theorem}\label{2-p-normal}
Let $q$ be a power of a prime $p\ge 3$ and let $n\ge 3$ be a positive integer. Then there exist $2$-primitive, normal elements of $\F_{q^n}$ over $\F_q$, unless $(q,n)=(3,4)$.
\end{theorem}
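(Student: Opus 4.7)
The plan is to follow the Lenstra--Schoof style sieve strategy, entering through Corollary~\ref{cor:main_ineq}(a). First I would apply the unsieved version (so $s = 0$, $\delta = 1$), reducing the entire question to checking the single inequality
\[
q^{n/2} > 4\, W(q_0)\, W(x^u - 1),
\]
where $q_0$ is the squarefree part of $q^n - 1$ and $u$ is the part of $n$ coprime to $p$. This is an effective arithmetic inequality which I would attack asymptotically by substituting the bounds of Lemma~\ref{lemma:w_bound1} on $W(q_0)$ (with exponents $a = 4$ and $a = 8$ as needed) together with Lemma~\ref{lemma:w_bound2} on $W(x^u - 1)$, taking care to use the sharper bounds available for $q \in \{3, 5\}$ and for $u \notin \{4, 6, 8\}$. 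The output is a family of explicit sufficient conditions of the form displayed in Table~\ref{tab:pan1}, after which only finitely many pairs $(q, n)$ remain.

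Second, for the residual pairs I would sharpen the bound by replacing the generic estimate $W(q_0) \leq c_{q_0}\, q_0^{1/4}$ with the exact value of $W(q_0)$ (computed by factoring $q^n - 1$), which already eliminates the great majority of pairs. For whatever remains I would apply the sieved version of Corollary~\ref{cor:main_ineq}(a): given $\{p_1, \dots, p_s\}$ dividing $q_0$ with $\delta = 1 - \sum p_i^{-1} > 0$, the sieve replaces the factor $2$ on the right-hand side with $\tfrac{s-1}{\delta} + 2$, which can be made substantially smaller than the unsieved quantity by a greedy choice of the smallest prime divisors of $q_0$. A short algorithmic search over valid prime subsets should reduce the list to a couple dozen small pairs.

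Finally, for each of the remaining small pairs $(q, n)$ I would run a direct \textsc{SageMath} verification: enumerate the $\varphi((q^n - 1)/2)$ elements of $\F_{q^n}$ of order $(q^n - 1)/2$ and, for each, test normality over $\F_q$ by checking whether its minimal $\F_q$-annihilator $m_\alpha(x)$ equals $x^n - 1$ (equivalently, whether the Moore/circulant matrix of its Galois conjugates is nonsingular). The pair $(3, 4)$ should emerge from this search as a genuine exception, and should be ruled out directly by exhausting the $\varphi(40) = 16$ elements of order $40$ in $\F_{3^4}$ and verifying that none is normal over $\F_3$. The main obstacle I anticipate is not conceptual but rather the tightness of the inequality in the smallest cases ($q \in \{3, 5\}$ with small $n$), where the generic $W$-function bounds degrade and careful exact bookkeeping of the primes actually dividing $q^n - 1$ is what makes the sieve strong enough to leave a tractable exceptional list.
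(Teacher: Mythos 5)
Your proposal is correct and follows essentially the same route as the paper: the unsieved instance of Corollary~\ref{cor:main_ineq}(a) with the bounds of Lemmas~\ref{lemma:w_bound1} and~\ref{lemma:w_bound2} to produce Table~\ref{tab:pan1}, then exact computation of $W(q^n-1)$, then the prime sieve, and finally direct \textsc{SageMath} verification of the surviving small pairs with $(3,4)$ emerging as the unique genuine exception. The only quibble is that with $s=0$ and $\delta=1$ the factor $\frac{s-1}{\delta}+2$ equals $1$, so the unsieved condition is $q^{n/2}>2\,W(q_0)W(x^u-1)$ rather than your $q^{n/2}>4\,W(q_0)W(x^u-1)$; your version is merely more conservative and therefore still valid.
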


\subsection{Existence of $1$-normal, $2$-primitive elements}
Next, we proceed to the $1$-normal and $2$-primitive case. We separate the study of Eq.~\eqref{main_eq2} in two cases whether $n$ is divisible by $p$ or not, i.e., $k=0$ or $k>0$, while for the former we study the case $n=3$ separately, since this case requires special attention and a different strategy is adopted. Also, we confine ourselves to $p\geq 5$.
\subsubsection{The case $p\neq 3$ and $n=3$}\label{section:n=3}
%

Here we assume that $n=3$ and $p\ge 5$. For this case, it is clear that a $2$-primitive element of $\F_{q^3}$ can be either normal or $1$-normal over $\F_q$; for this, we see that any $2$-normal element $\alpha\in \F_{q^3}$ is such that $\alpha^{q-1}$ is in $\F_q$, hence $\alpha^{(q-1)^2}=1$ but $(q-1)^2<(q^3-1)/2$ for $q\ge 5$. Additionally, some $w\in\F_{q^3}$ such that $\Tr_{q^3/q}(w)=0$ cannot be normal, that is it suffices to search for a primitive element $w\in\F_{q^3}$ such that $\Tr_{q^3/q}(w^2)=0$, so Theorem~\ref{thm:main_ineq} implies that it is enough to show that $N(1,1,0)>0$. A treatment similar to Corollary~\ref{cor:main_ineq} yields the following.
%
%
%
\begin{corollary}\label{cor:sieve2}
Let $\{p_1 ,\ldots ,p_s \}$ be a set of distinct prime divisors of $q'$, the square-free part of $q^3-1$ (this set may be $\emptyset$, in which case $s=0$), such that $\delta := 1-\sum_{i=1}^s p_1^{-1} > 0$. If
\[
q^{1/2} > 2 W(q_0) \left( \frac{s-1}{\delta} + 2 \right),
\]
where $q_0 := q'/(p_1 \cdots p_s)$, then there exists a 2-primitive $1$-normal element of $\F_{q^3}$ over $\F_q$.
\end{corollary}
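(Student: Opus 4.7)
The plan is to apply the multiplicative sieve (Proposition~\ref{propo:sieve1}) directly to the count $N(1,1,0)$. As established in the opening discussion of Section~\ref{section:n=3}, for $n=3$ and $p\ge 5$ any primitive $w\in\F_{q^3}$ with $\Tr_{q^3/q}(w^2)=0$ produces a $2$-primitive, $1$-normal $w^2$: the possibilities $m_{w^2}(x)=1$ and $\deg m_{w^2}(x)=1$ are ruled out by the order of $w$ and by $q\ge 5$, respectively, leaving only $m_{w^2}(x)=x^2+x+1$. Hence it suffices to show $N(1,1,0)>0$.

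First I would invoke Proposition~\ref{propo:sieve1} with divisors $r_i=q_0p_i$ of $q'$: they satisfy $\lcm(r_1,\dots,r_s)=q'$ and $\gcd(r_i,r_j)=q_0$, yielding
\[
N(1,1,0) \;\ge\; \sum_{i=1}^{s} N(1,1,0)_{q_0p_i}\;-\;(s-1)\,N(1,1,0)_{q_0}.
\]
Next, for each $l\mid q'$, I would estimate $N(1,1,0)_l$ via the character-sum identity behind Theorem~\ref{thm:main_ineq}, with $f=1$, $m=1$ and $\beta=0$. Here the additive integral collapses (only $D=1$ survives) and $a_c=\chi_c(0)^{-1}=1$ for every $c\in\F_q$, so the formula reduces to
\[
\frac{N(1,1,0)_l}{\theta(l)}=\frac{1}{q}\Biggl[\,q^{3}\;+\;\sum_{c\in\F_q^*}\;\int\limits_{d\mid l,\,d\ne 1} G_2(\eta_d,\chi_c)\,\Biggr].
\]
Applying Theorem~\ref{Gauss2} (valid since $p\ge 5$ forces $\gcd(2,q^3)=1$) with the bound $|G_2(\eta_d,\chi_c)|\le 2q^{3/2}$ produces
\[
\left|\frac{N(1,1,0)_l}{\theta(l)}-q^{2}\right| \;<\; 2q^{3/2}\bigl(W(l)-1\bigr).
\]

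Substituting these estimates into the sieve inequality, and using the identities $W(q_0p_i)=2W(q_0)$, $\theta(q_0p_i)=\theta(q_0)(1-p_i^{-1})$ and $\sum_{i=1}^{s}(1-p_i^{-1})=s-1+\delta$, the bookkeeping is identical to that in the proof of Corollary~\ref{cor:main_ineq} and yields
\[
\frac{N(1,1,0)}{\theta(q_0)} \;\ge\; \delta\,q^{2}\;-\;2q^{3/2}\,W(q_0)\bigl(s-1+2\delta\bigr).
\]
Rearranging, the right-hand side is positive precisely when $q^{1/2}>2W(q_0)\bigl((s-1)/\delta+2\bigr)$, which is the hypothesis, so $N(1,1,0)>0$. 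The main obstacle is really the clean accounting of the signed contributions from the sieve, so that the main term crystallises as $\delta q^{2}$ and is not swallowed by error. A conceptual simplification specific to this case — which explains why $f=1$ suffices, rather than the $T$-freeness condition of Proposition~\ref{LR} used in Corollary~\ref{cor:main_ineq}(b) — is that in $\F_{q^3}$ with $p\ge 5$ a $2$-primitive element can only be $0$-normal or $1$-normal, so forbidding normality via $\Tr_{q^3/q}(w^2)=0$ automatically forces $m_{w^2}=x^2+x+1$.
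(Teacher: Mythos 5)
Your proposal is correct and follows essentially the same route as the paper, which proves this corollary only by pointing to the reduction in Section 4.2.1 (a $2$-primitive element of $\F_{q^3}$ with $p\ge 5$ cannot be $2$-normal, and a trace-zero element cannot be normal, so $N(1,1,0)>0$ suffices) and then invoking ``a treatment similar to Corollary~\ref{cor:main_ineq}''. You have simply written out that treatment — the sieve of Proposition~\ref{propo:sieve1} with $r_i=q_0p_i$, the character-sum estimate with $f=1$, $m=1$, $\beta=0$, and the Cohen--Huczynska bookkeeping giving the main term $\delta q^2$ against the error $2q^{3/2}W(q_0)(s-1+2\delta)$ — and the details check out.
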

%
We begin without any sieving primes, i.e. $s=0$, so a sufficient condition would be
\begin{equation}\label{eq:n=3(1)}
q^{1/2} > 2\cdot W(q^3-1).
\end{equation}
With the help of Lemma~\ref{lemma:w_bound1}, this gives $q > (2\cdot d_{q^3-1})^8$, where $d_{q^3-1}<4514.7$. This is true for $q\geq q_{\mathrm{max}} = 9029.4^8 \sim 4.42\cdot 10^{31}$. Recall that $d(t)$ denoted the number of distinct prime divisors of $t$. A quick computation shows that the product of any $52$ distinct primes is larger than $q_{\mathrm{max}}^3-1$, which implies that the case $d(q')\ge 52$ is settled.

Next, we focus on $q$ with $19\leq d(q')\leq 51$. We sieve the largest 15 prime divisors, say $p_1, \ldots ,p_{15}$, of $q'$ and we get that $\delta \geq 0.32$ and $d_{q'/p_1\cdots p_{15}} < 2618.07$. The resulting condition is satisfied for $q\geq 8261356$. However, for all $q< 8261356$, we have that $d(q') < 19$, i.e. this possibility is also settled.

We employ the same technique for $11 \leq d(q') <19$ and resolve the case $q\geq 906561$ for that case. For the remaing cases, it is clear that Eq.~\eqref{eq:n=3(1)} remains unsatisfied for $q<2^{22}$. So, since $2^{22}>906561$, it suffices to investigate $q<2^{22}$.

First, we check that all, but 42304 $q$'s satisfy Eq.~\eqref{eq:n=3(1)}, with the bound $W(q^3-1) \leq c_{q^3-1,12} q^{1/4}$. Amongst them, 10067 do not satisfy this with $W(q^3-1)$ being excplicitly computed. For those $q$'s we apply a sieving algorithm, as presented in Appendix~\ref{appendix:pseudocodes}, that is succesfull for all but 191 values of $q$. These prime powers are presented in Table~\ref{table:1n2p_exc1} and for them, we are able to find examples of $2$-primitive $1$-normal elements, using the algorithm presented in Appendix~\ref{appendix:pseudocodes}. All in all, we have the following.
\begin{proposition}
If $p> 3$, there exists a $2$-primitive $1$-normal element of $\F_{q^3}$ over $\F_q$. 
\end{proposition}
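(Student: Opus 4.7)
The plan is to invoke Corollary~\ref{cor:sieve2} to reduce the problem to a numerical inequality, chip away the bulk of $q$ by coarse bounds, tighten the sieving window progressively, and finally dispose of a small list of exceptions computationally. My opening move is to take the empty sieving set $s=0$, which gives the clean sufficient condition
\[
q^{1/2} > 2W(q^3-1).
\]
Observe also that a $2$-primitive element of $\F_{q^3}$ is automatically at most $1$-normal (since any $2$-normal $\alpha$ satisfies $\alpha^{(q-1)^2}=1$ and $(q-1)^2<(q^3-1)/2$ for $q\geq 5$) and cannot be $0$-normal if $\Tr_{q^3/q}(w^2)=0$. Hence it suffices to show $N(1,1,0)>0$ in the notation of Theorem~\ref{thm:main_ineq}.

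To turn the inequality into a finite check I would apply Lemma~\ref{lemma:w_bound1} with $a=8$, yielding $W(q^3-1)\leq d_{q^3-1}(q^3-1)^{1/8}$ with $d_{q^3-1}<4514.7$. This reduces matters to $q\geq q_{\max}:=9029.4^8\sim 4.42\cdot 10^{31}$. From there I would switch to cataloguing $q$ by $d(q')$, the number of distinct prime divisors of the square-free part $q'$ of $q^3-1$: once $d(q')\geq 52$, the bare product of the $52$ smallest primes already exceeds $q_{\max}^3-1$, so this case is impossible for $q<q_{\max}$ and thus closed.

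For intermediate ranges I would tighten via sieving. For $19\leq d(q')\leq 51$, sieve the $15$ largest prime divisors of $q'$: one checks $\delta\geq 0.32$ and $d_{q'/(p_1\cdots p_{15})}<2618.07$, and Corollary~\ref{cor:sieve2} then succeeds for $q\geq 8261356$. Since any $q$ below that threshold forces $d(q')<19$, this slice is clean. A parallel sieve handles $11\leq d(q')<19$ down to around $q\geq 906561$, and for $d(q')<11$ the crude bound $W(q^3-1)\leq 2^{10}$ settles everything from $q\geq 2^{22}$ onward. So the remaining task is a sweep through $q<2^{22}$.

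That computational sweep is the one genuinely laborious step. First I would re-run the $s=0$ test with the sharper estimate $W(q^3-1)\leq c_{q^3-1,12}\,q^{1/4}$, expecting almost all $q$ in range to pass; for those that fail I would compute $W(q^3-1)$ exactly and retry; for the residue I would run the sieving algorithm of Appendix~\ref{appendix:pseudocodes} with judiciously chosen primes; and for the final handful that refuses to submit (on the order of a couple of hundred values), I would simply exhibit an explicit primitive $w\in\F_{q^3}$ with $\Tr_{q^3/q}(w^2)=0$. The main obstacle is precisely this last bookkeeping: the decreasing efficacy of the sieve on small $q$ means many individual cases survive the theoretical filter, and the proof hinges on a clean tabulation (and independent numerical verification) of the surviving exceptions.
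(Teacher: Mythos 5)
Your proposal follows the paper's own argument essentially verbatim: the same reduction to $N(1,1,0)>0$ via the observation that $2$-normal elements cannot be $2$-primitive and that trace-zero squares cannot be normal, the same $s=0$ baseline from Corollary~\ref{cor:sieve2}, the same stratification by $d(q')$ with matching thresholds ($d(q')\ge 52$, the $15$-prime sieve with $\delta\ge 0.32$, the cutoffs $8261356$, $906561$, $2^{22}$), and the same final computational sweep. This is correct and identical in approach to the paper.
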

%
%
%
\subsubsection{The case $\gcd(n, p)=1$, $n,p>3$}
If $n$ is not divisible by $p$, then $n=u$ and Eq.~\eqref{main_eq2} is equivalent to 
\begin{equation}\label{eq:(n,p)=1}q^{n/2-1}>W(q^n-1)W(x^n-1).\end{equation}
The cases $n\leq 3$ have already been settled, so from now on we may assume that $n\geq 4$. For $n=4$, we check that for $q\geq 5217924120$, we have that
\begin{equation}\label{eq:1n2p_generic1}
q^{3n/8-1} \geq d_{q^n-1} \cdot W(x^u-1),
\end{equation}
for $d_{q^n-1} \leq 4514.7$ and $W(x^u-1) \leq 2^n$. Within the range $3< q< 5217924120$ we check that there are 4598 prime powers co-prime to $6$, that do not satisfy Eq.~\eqref{eq:1n2p_generic1}, if we substitute $d_{q^n-1}$ by its exact value, with $q=1658623$ being the largest amongst them. We check them and verify that 433 prime powers $q$ fail to satisfy
Eq.~\eqref{eq:(n,p)=1} with all quantities explicitly computed. Then, we follow the same steps for $n=5,\ldots,10$. 
%
%

For $11\leq n <20$, Eq.~\eqref{eq:1n2p_generic1} holds for $q\geq 170$, when $d_{q^n-1} \leq 4514.7$ and $W(x^u-1) \leq 2^n$. In this region, 41 pairs do not satisfy Eq.~\eqref{eq:1n2p_generic1}, once we replace $d_{q^n-1}$ by its exact value and among them only 3 fail to satisfy Eq.~\eqref{eq:(n,p)=1} with all quantities explicitly computed. For $21\leq n<25$, we follow the same procedure and initially we have get the desired result for $q\geq 31$, then, with the same steps, we reduce the list to 18 pairs, but eventually we have no new possible exceptions. Finally, we note that Eq.~\eqref{eq:1n2p_generic1} holds for $n\geq 25$ and $q\geq 22$, so the cases $q<21$ and $n>26$ are left to investigate.
%
%

So we start with $q=19$. A quick computation reveals that Eq.~\eqref{eq:1n2p_generic1} holds for $q=19$ and $n\geq 23$, for $d_{q^n-1} < 3261.7$ (as $19\nmid q^n-1$) and $W(x^u-1) \leq 2^{(n+q-1)/2}$, so we have no additional exceptions for $q=19$. Similar arguments for $q=17, 13, 11, 7$ and $5$ yield no further exceptions. We only describe $q=5$ as an example: in this case, we assume $d_{q^n-1} < 2760.39$ and Eq.~\eqref{eq:(n,p)=1} is true for $n\geq 43$, while in the region $26\leq n<43$, Eq.~\eqref{eq:(n,p)=1} is satisfied.
%
%

From the above procedure we have identified 472 possible exception pairs $(q,n)$. The sieving algorithm, as presented in Appendix~\ref{appendix:pseudocodes}, yields a succesfull result for roughly half of them, in particular for those with large components, while the remaining 283 pairs are presented in Table~\ref{table:1n2p_exc1}.
%
%

We define $S_1$ as the set of the pairs in  Table~\ref{table:1n2p_exc1}. In particular, if $(q, n)\not \in S_1$, it follows from Theorem~\ref{thm:main_ineq} that there exist $2$-primitive, $1$-normal elements of $\F_{q^n}$ over $\F_q$. For the pairs $(q, n)$ in $S_1$, we verify directly the existence of $2$-primitive, $1$-normal elements: see Appendix~\ref{appendix:pseudocodes} for the pseudocode. In conclusion, we obtain the following result.
%
%

\begin{proposition}\label{propo:2-p-$1$-normal}
Let $q$ be a power of a prime $p\ge 3$ and let $n\ge 3$ be a positive integer such that $\gcd(n, p)=1$. Then there exist $2$-primitive, $1$-normal elements of $\F_{q^n}$ over $\F_q$.
\end{proposition}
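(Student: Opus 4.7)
The plan is to deduce the result from part (b) of Corollary~\ref{cor:main_ineq}: since $\gcd(n,p)=1$ we have $k=0$ and $u=n$, so a sufficient condition for the existence of a $2$-primitive, $1$-normal element of $\F_{q^n}$ over $\F_q$ is the inequality
\[
q^{n/2-1} > W(q^n-1)\cdot W(x^n-1),
\]
(the $s=0$ version of \eqref{main_eq2}). My strategy is then the now standard three-step scheme: (i) use the general bounds of Lemmas~\ref{lemma:w_bound1} and~\ref{lemma:w_bound2} to handle all but finitely many pairs $(q,n)$; (ii) refine with exact values of $W$ for the residual pairs; (iii) when even that fails, apply the sieving inequality of Corollary~\ref{cor:main_ineq} with a carefully chosen set $\{p_1,\dots,p_s\}$ of prime divisors of the square-free part $q'$ of $q^n-1$; and (iv) for the still remaining pairs, exhibit $2$-primitive $1$-normal elements by direct computer search.

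For the first pass I would split by $n$. Using $W(q^n-1)\leq d_{q^n-1}(q^n-1)^{1/8}$ with $d_{q^n-1}<4514.7$ and $W(x^n-1)\leq 2^n$, the sufficient condition takes the form $q^{3n/8-1}\geq d_{q^n-1}\cdot W(x^n-1)$, which is easily checked to hold as soon as $q$ exceeds an explicit $n$-dependent threshold for $n=4,5,\dots,10$, again for $11\leq n<20$, then for $21\leq n<25$, and finally in the uniform range $n\geq 25$, $q\geq 22$. This leaves a finite list of pairs; next I recompute with the actual value of $d_{q^n-1}$, which drastically shortens the list; and then I recompute with the exact values of $W(q^n-1)$ and $W(x^n-1)$, which removes most of the remaining candidates. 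For $q\in\{5,7,11,13,17,19\}$ (together with the range $n\geq 26$) I would use the sharpened estimates from Lemma~\ref{lemma:w_bound2}(1) together with the fact that $q\nmid q^n-1$ refines $d_{q^n-1}$, handling these small-$q$ tails.

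At the end of this process one is left with a manageable set of pairs $(q,n)$ (as in Table~\ref{table:1n2p_exc1}). For each such pair I would run the sieving inequality of Corollary~\ref{cor:main_ineq}(b) with $s\geq 1$: pick the largest few primes dividing $q'$ as sieving primes, which typically makes the resulting condition pass whenever the dominant component $W(q_0)$ drops sufficiently. This clears roughly half of the remaining pairs. Denote by $S_1$ the set of pairs on which every bound above still fails. For $(q,n)\in S_1$ I would directly search $\F_{q^n}$ for a $2$-primitive, $1$-normal element, checking primitivity of a square root and verifying $m_{\alpha^2}(x)=\frac{x^n-1}{x-1}$ via the characterisation of Proposition~\ref{LR}; the pseudocode is that of Appendix~\ref{appendix:pseudocodes}.

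The main obstacle is the bookkeeping and efficiency of the bottom end: the inequality $q^{n/2-1}>W(q^n-1)W(x^n-1)$ is very tight when $n$ is close to $4$ and $q$ is small, since then $W(x^n-1)$ can approach its maximum $2^n$ while the left-hand side is essentially $q$. Those cases force the use of both refined sieving and an explicit computer verification, and the delicate point is to organise the exhaustion over $S_1$ so that no pair is missed and every pair is resolved, either via a successful sieving choice or via the exhibition of a concrete witness. Once that bookkeeping is carried out cleanly the proposition follows.
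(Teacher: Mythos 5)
For $n\ge 4$ your plan coincides with the paper's proof essentially line for line: the same specialisation of Corollary~\ref{cor:main_ineq}(b) to $k=0$, $u=n$, the same generic bounds $W(q^n-1)\le d_{q^n-1}\,q^{n/8}$ and $W(x^n-1)\le 2^n$ leading to the auxiliary condition $q^{3n/8-1}\ge d_{q^n-1}W(x^n-1)$, the same splitting into the ranges $n=4,\dots,10$, $11\le n<20$, $21\le n<25$ and $n\ge 25$, the same successive refinements (exact $d_{q^n-1}$, then exact values of $W$, then small-$q$ tails using $q\nmid q^n-1$ and Lemma~\ref{lemma:w_bound2}), the same multiplicative-only sieving, and the same terminal computer search over the residual set (the paper's Table~\ref{table:1n2p_exc1}). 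For that range there is nothing to object to beyond the fact that, as in the paper, the argument ultimately rests on machine verification.

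The gap is $n=3$: your case analysis starts at $n=4$, yet $n=3$ is part of the statement and is by far the heaviest subcase ($191$ of the $283$ entries of Table~\ref{table:1n2p_exc1}). The paper does \emph{not} run the generic inequality \eqref{main_eq2} there. Section~\ref{section:n=3} uses a different argument: in a cubic extension a $2$-primitive element cannot be $2$-normal (a $2$-normal $\alpha$ satisfies $\alpha^{(q-1)^2}=1$, and $(q-1)^2<(q^3-1)/2$), and an element of trace zero over $\F_q$ cannot be normal, so it suffices to produce a primitive $w$ with $\Tr_{q^3/q}(w^2)=0$, i.e.\ $N(1,1,0)>0$ in the notation of Theorem~\ref{thm:main_ineq}. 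This yields the criterion $q^{1/2}>2\,W(q_0)\bigl(\frac{s-1}{\delta}+2\bigr)$ of Corollary~\ref{cor:sieve2}, which beats your $q^{1/2}>W(q_0)W(x^3-1)\bigl(\frac{s-1}{\delta}+2\bigr)$ by the factor $W(x^3-1)/2$, equal to $4$ whenever $q\equiv 1\pmod 3$. Your route is not invalid --- Corollary~\ref{cor:main_ineq}(b) does apply with $k=0$, $u=3$ --- but since the criterion only has $q^{1/2}$ on the left, losing a factor of $4$ multiplies the threshold on $q$ by up to $16$, and you would still need the descent on the number of prime divisors of $q^3-1$ (the paper iterates from $q_{\mathrm{max}}\sim 4.4\cdot 10^{31}$ down to $q<2^{22}$) before the exceptional set becomes searchable. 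As written, your proof neither treats $n=3$ explicitly nor acknowledges that it needs this extra work, so you should either import the trace-zero argument of Section~\ref{section:n=3} or spell out and commit to the substantially larger computation your weaker bound entails.
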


%
\begin{table}[ht]
\begin{center}\tiny
\begin{tabular}{|c|p{0.8\linewidth}|c|}
\hline $n$ & $q$ & \# \\ \hline\hline
3 & 5, 7, 11, 13, 17, 19, 23, 25, 29, 31, 37, 41, 43, 47, 49, 53, 59, 61, 67, 71, 73, 79, 83, 89, 97, 101, 103, 107, 109, 113, 121, 125, 127, 131, 137, 139, 149, 151, 157, 163, 169, 179, 181, 191, 193, 197, 199, 211, 223, 227, 229, 233, 239, 241, 251, 263, 269, 271, 277, 281, 283, 289, 307, 311, 313, 331, 337, 343, 347, 349, 359, 361, 367, 373, 379, 397, 401, 409, 419, 421, 431, 439, 443, 457, 461, 463, 487, 491, 499, 521, 523, 529, 541, 547, 571, 601, 607, 613, 619, 625, 631, 643, 661, 691, 709, 733, 739, 751, 757, 809, 811, 821, 823, 841, 859, 877, 907, 919, 961, 967, 991, 997, 1009, 1021, 1031, 1033, 1051, 1069, 1087, 1093, 1123, 1129, 1171, 1201, 1231, 1291, 1303, 1321, 1327, 1369, 1381, 1429, 1451, 1453, 1471, 1531, 1597, 1621, 1681, 1741, 1759, 1831, 1849, 1871, 1873, 1933, 1951, 2011, 2209, 2221, 2311, 2341, 2347, 2401, 2473, 2521, 2531, 2551, 2557, 2671, 2731, 2851, 2857, 2971, 3061, 3301, 3481, 3541, 3571, 3691, 3721, 3931, 4111, 4561, 4621, 4951, 5791, 5821, 6091, 9181, 9811 & 191 \\ \hline
4 & 5, 7, 11, 13, 17, 19, 23, 25, 29, 31, 37, 41, 43, 47, 49, 53,
59, 61, 67, 71, 73, 79, 83, 89, 97, 101, 103, 107, 109, 113, 121,
125, 127, 131, 137, 139, 149, 151, 157, 167, 169, 173, 179, 181, 191,
193, 197, 211, 229, 233, 239, 241, 257, 269, 277, 281, 293, 307, 313,
317, 337, 349, 353, 361, 373, 389, 397, 401, 421, 433, 461, 463, 701,
853 & 74 \\ \hline
5 & 7, 11, 31, 61 & 4 \\ \hline
6 & 5, 7, 11, 13, 19, 25, 31, 37, 43 & 9 \\ \hline
8 & 5, 7 & 2 \\ \hline
10 & 11 & 1 \\ \hline
12 & 5, 7 & 2 \\ \hline
\multicolumn{2}{|r|}{\textbf{Total:}} & 283 \\ \hline
\end{tabular}
\end{center}
\caption{Possible exceptions pairs $(q,n)$ for $2$-primitive and $1$-normal, when $p\geq 5$, $n\geq 3$ and $\gcd(p,n)=1$.\label{table:1n2p_exc1}}
\end{table}

\subsubsection{The case $n$ is divisible by $p$}

Here, we confine ourselves to the case $p\neq 3$.
Following the ideas of \cite{RT}, we first proceed with the case when $n$ is divisible by $p^2$, i.e., $k\ge 2$. We obtain the following result.

\begin{lemma}
Suppose that $n=p^2s$, where $s\ge 1$ and $p\ge 5$. There exists a $2$-primitive, $1$-normal element of $\F_{q^n}$ over $\F_q$.  
\end{lemma}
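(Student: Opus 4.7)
The plan is to combine the subfield-trace characterization of $1$-normal elements (Proposition~\ref{prop:1-trace}) with the prescribed-trace existence result for $2$-primitive elements (Corollary~\ref{cor:2-trace}). Specifically, Proposition~\ref{prop:1-trace} tells us that when $n = p^2 s$, an element $\alpha \in \F_{q^n}$ has $\F_q$-order $\frac{x^n-1}{x-1}$ (hence is $1$-normal over $\F_q$) if and only if $\beta := \Tr_{q^n/q^{ps}}(\alpha)$ has $\F_q$-order $\frac{x^{ps}-1}{x-1}$. So the problem reduces to selecting a suitable trace target.

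First I would choose $\beta \in \F_{q^{ps}}$ with $m_\beta(x) = \frac{x^{ps}-1}{x-1}$. The polynomial $g(x) := \frac{x^{ps}-1}{x-1}$ is a monic divisor of $x^{ps}-1$ of degree $ps-1$, so by Theorem~3.5 of \cite{HMPT} (recalled in Section~2.1) the number of such elements is $\Phi(g) \geq 1$, hence at least one exists.

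Next, writing $n = p^k u$ with $\gcd(u, p) = 1$, the hypothesis $n = p^2 s$ forces $k \geq 2$, and we are assuming $p \geq 5$. Thus Corollary~\ref{cor:2-trace} applies and guarantees, for \emph{any} prescribed target in $\F_{q^{n/p}}$, a $2$-primitive element of $\F_{q^n}$ with that trace. Applied to the $\beta$ chosen above (note $n/p = ps$), it yields a $2$-primitive $\alpha \in \F_{q^n}$ with $\Tr_{q^n/q^{ps}}(\alpha) = \beta$. By Proposition~\ref{prop:1-trace} this forces $m_\alpha(x) = \frac{x^n-1}{x-1}$, so $\deg m_\alpha = n-1$ and $\alpha$ is $1$-normal over $\F_q$, completing the proof.

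There is no real obstacle here since the hard analytic work has already been done in Corollary~\ref{cor:2-trace}; the only thing to verify is that the indices line up correctly, which is immediate from $n = p^2 s$ (giving $n/p = ps$ and $k \geq 2$).
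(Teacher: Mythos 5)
Your proof is correct and follows essentially the same route as the paper: pick $\beta\in\F_{q^{ps}}$ with $m_\beta=\frac{x^{ps}-1}{x-1}$, invoke Corollary~\ref{cor:2-trace} (noting $k\ge 2$ and $p\ge 5$) to obtain a $2$-primitive $\alpha$ with $\Tr_{q^n/q^{ps}}(\alpha)=\beta$, and conclude via Proposition~\ref{prop:1-trace}. Your explicit justification that such a $\beta$ exists (via $\Phi(g)\ge 1$) is a small detail the paper leaves implicit.
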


\begin{proof}
Let $\beta\in \F_{q^{ps}}$ be such that $m_{\beta}=\frac{x^{ps}-1}{x-1}$. From Corollary~\ref{cor:2-trace}, there exists a $2$-primitive element $\alpha\in \F_{q^n}$ such that $\Tr_{q^n/q^{ps}}(\alpha)=\beta$. From Proposition~\ref{prop:1-trace}, it follows that such an $\alpha$ satifies $m_{\alpha}=\frac{x^n-1}{x-1}$, i.e. $\alpha$ is $2$-primitive and $1$-normal.
\qed\end{proof}

We proceed with the case $u\geq 3$ and $k=1$.



\begin{proposition}\label{propo:u>2}
Let $q=p^t$ and $n=pu$, where $p>3$, $\gcd(p,u)=1$, $u>2$ and $k\geq 1$. Then there exists an $1$-normal and $2$-primitive element of $\F_{q^n}$ over $\F_q$.
\end{proposition}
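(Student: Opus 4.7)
The plan is to apply Corollary~\ref{cor:main_ineq}(b) with $k = 1$ and $n = pu$, whose sufficient condition for the existence of a $1$-normal, $2$-primitive element of $\F_{q^n}$ over $\F_q$ reads
\[
q^{p(u/2 - 1)} > W(q_0)\, W(x^u - 1)\left(\tfrac{s - 1}{\delta} + 2\right).
\]
The strategy mirrors the approach used in the case $\gcd(n, p) = 1$: first apply generic bounds on the $W$-functions to reduce to a finite list of pairs $(q, u)$, then invoke the multiplicative sieve to shrink that list, and finally verify the remaining pairs by a direct computer search.

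More concretely, I would begin with $s = 0$, so the target becomes
\[
q^{p(u/2 - 1)} > 2\, W(q^{pu} - 1)\, W(x^u - 1). \qquad (\star)
\]
Applying Lemma~\ref{lemma:w_bound1} with $a = 8$ bounds $W(q^{pu} - 1) < d_{q^{pu}-1}\, q^{pu/8}$, and Lemma~\ref{lemma:w_bound2} gives $W(x^u - 1) \le 2^{(u + \min(u, q - 1))/2}$. Substituting, $(\star)$ follows from an inequality of the shape $q^{p(3u - 8)/8} > c_{p,q}\cdot 2^u$, whose left-hand side grows like $q^{\Theta(pu)}$ while the right-hand side is only exponential in $u$. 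Splitting the analysis by $q$ and using case-specific sharpenings (for instance, $5 \nmid q^{pu} - 1$ when $q$ is a power of $5$, which lowers $d_{q^{pu}-1}$) produces an explicit finite list of exceptional pairs.

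For each surviving pair I would apply sieving via Corollary~\ref{cor:main_ineq}(b) with $s > 0$, selecting a small set of prime divisors $p_1, \dots, p_s$ of the square-free part of $q^{pu} - 1$ so that $\delta > 0$ and $W(q_0)$ is substantially reduced. Pairs that still fail the sieved inequality are handled by a direct search: by Proposition~\ref{LR}, one looks for a primitive $w \in \F_{q^{pu}}$ such that $w^2$ is $T(x)$-free, where $T(x) = (x^u - 1)/(x - 1)$, and $\Tr_{q^n/q^p}(w^2) = \beta$ for a fixed $\beta \in \F_{q^p}$ of $\F_q$-order $(x^p - 1)/(x - 1)$.

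The main obstacle I expect is the case $u = 3$: there the exponent on the LHS of $(\star)$ collapses to $p/2$, so the generic bounds leave a substantial list of small pairs $(p, q)$. The sieve is unlikely to close the gap uniformly, and most of the computational effort will concentrate on producing explicit witnesses in fields $\F_{q^{3p}}$. For $u \ge 5$, by contrast, the factor $p(u/2 - 1)$ on the LHS grows linearly in $u$ and comfortably dominates the $W$-bounds, so only a short, nearly uniform argument per $q$ is required.
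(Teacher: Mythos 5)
Your proposal is correct and follows essentially the same route as the paper: apply Corollary~\ref{cor:main_ineq}(b) with the generic bounds of Lemmas~\ref{lemma:w_bound1} and~\ref{lemma:w_bound2} (including the $a=8$ bound and the isolation of the delicate case $u=3$, where the exponent degenerates to $p/2$), reduce to a finite list of pairs, then finish by sieving. The only cosmetic difference is that in the paper the sieve already disposes of the last three persistent pairs $(5,15)$, $(5,20)$, $(25,15)$, so no direct witness search is ultimately needed for this proposition.
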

\begin{proof}
We start with the delicate case $u=3$. Write $q=p^t$. For $p=5$, Lemma~\ref{lemma:w_bound1} implies $W(q^n-1)<d_{q^n-1}\cdot q^{n/8}$ and Lemma~\ref{lemma:w_bound2} $W(x^u-1)\leq 2^3$. In particular, we use the bound $d_{q^n-1}<2760.39$. We attach these bounds to Eq.~\eqref{main_eq2} and get that we have our desired result for $t\geq 10$. We use the same technique for $7\leq p\leq 17$ and settle the cases for $t$ larger than a specific number 
and for $p=17$ (with $d_{q^n-1}<3216.66$) we settle the case $t\geq 2$. 
Finally, for $p\geq 19$ and $t=1$, we use the generic bound $d_{q^n-1}<4514.7$ and Eq.~\eqref{main_eq2} holds for $q\geq 29$. In short, there remain 22 pairs $(q,n)$ to deal with.
%
%

We continue with the remaining cases. We observe that Eq.~\eqref{main_eq2} is always satisfied (with the generic bounds $W(q^n-1)<4.9\cdot q^{n/4}$ and $W(x^u-1)\leq 2^u$) for $u\geq 8$. 
With similar techniques, we identify the possible exceptions for $4\leq u\leq 7$ and we end up with additional 30 pairs $(q,n)$ to deal with.
%
%

The combined list of 52 pairs $(q,n)$ of possible exceptions is reduced to 7, once we use the bound $W(q^n-1)\leq d_{q^n-1}q^{n/8}$ in Eq.~\eqref{main_eq2}, but compute $d_{q^n-1}$ and $W(x^u-1)$ explicitly for every pair. Amongst those we find the persistent pairs $(5,15)$, $(5,20)$ and $(25,15)$ that do not satisfy Eq.~\eqref{main_eq2}, with all quantities explicitly computed,
%
%
but we succesfully apply sieving on them.
%
%
\qed\end{proof}

We now add the cases $n=p, 2p$. We note that Eq.~\eqref{main_eq2} is not useful in these cases. In the following proposition, we combine Eq.~\eqref{eqn:main1} with some combinatorial arguments in order to obtain the desired existence result. 

\begin{proposition}
Let $p\ge 5$ be a prime and $n=p, 2p$.  If there is no $2$-primitive, $1$-normal element of $\F_{q^n}$ over $\F_q$, then

\begin{enumerate}
\item $n=p$ and 
\begin{equation}\label{eq0:n=p}\frac{1}{\theta(q^{p}-1)}>q-2q^{2-\frac{p}{2}}W(q^{p}-1),\end{equation}
which is violated, except for $q=p=5$.
\item $n=2p$ and
\begin{equation}\label{eq0:n=2p}\frac{q}{(q-1)\theta(q^{2p}-1)}>q-4q^{2-p}W(q^{2p}-1),\end{equation}
which is violated for any $q$.

\end{enumerate}
\end{proposition}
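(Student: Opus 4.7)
The strategy is to reduce the non-existence of a 2-primitive, 1-normal element to a character-sum inequality. The key structural input in characteristic $p$ is the factorization $x^{p}-1=(x-1)^{p}$ (and hence $x^{2p}-1=(x-1)^{p}(x+1)^{p}$), which forces the $\F_{q}$-order of an element of $\F_{q^{p}}$ (resp.\ $\F_{q^{2p}}$) to be a product of powers of $x-1$ (resp.\ of $x-1$ and $x+1$).

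For part 1 ($n=p$): since $(x-1)^{p-1}\circ v=\Tr_{q^{p}/q}(v)$ in characteristic $p$, an element $v\in\F_{q^{p}}$ is 1-normal iff $\Tr_{q^{p}/q}(v)=0$ and $(x-1)^{p-2}\circ v\ne0$. By Proposition~\ref{char1}, a 2-primitive, 1-normal element of $\F_{q^{p}}$ over $\F_q$ exists iff
\[
S_{p}:=\sum_{w\in\F_{q^{p}}^{*}}\omega(w)\,\Omega^{(1)}(w^{2})>0.
\]
Using the containment $\ker L_{(x-1)^{p-2}}\subseteq\ker\Tr_{q^{p}/q}$, one splits $S_{p}=A-B$, where $A=\#\{w\text{ primitive}:\Tr_{q^{p}/q}(w^{2})=0\}$ and $B=\#\{w\text{ primitive}:(x-1)^{p-2}\circ w^{2}=0\}$. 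The quantity $A$ is bounded below via Eq.~\eqref{eqn:main1} (with $n=p$, $f=1$, $m=1$, $\beta=0$), giving $A>\theta(q^{p}-1)(q^{p-1}-2q^{p/2}W(q^{p}-1))$. The quantity $B$ is bounded above by the combinatorial observation that $w^{2}$ lies in the $\F_{q}$-subspace $\ker L_{(x-1)^{p-2}}\cap\F_{q^{p}}$ of size $q^{p-2}$; careful tracking of the multiplicity of $w\mapsto w^{2}$ on primitives (which is $1$ or $2$ according to $q^{p}\bmod 4$) yields $B\le q^{p-2}-1$. Imposing $S_{p}\ge 1$ then rearranges exactly to Eq.~\eqref{eq0:n=p}. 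To show that this is violated except for $(q,p)=(5,5)$, one uses $W(q^{p}-1)=O(q^{p/4+\varepsilon})$ from Lemma~\ref{lemma:w_bound1}: the RHS of Eq.~\eqref{eq0:n=p} dominates the LHS $=1/\theta(q^{p}-1)$ for all but finitely many $(q,p)$, which are then checked directly.

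For part 2 ($n=2p$): Proposition~\ref{LR} reduces 1-normality in $\F_{q^{2p}}$ to $(x+1)$-freeness together with 1-normality of $\Tr_{q^{2p}/q^{p}}$ in $\F_{q^{p}}$. Unraveling the latter using part~1 and the identity $x^{p}+1=(x+1)^{p}$ (characteristic $p$), the same trick yields $S_{2p}=A'-B'$, where $A'$ counts primitive $w$ with $w^{2}$ that is $(x+1)$-free and $\Tr_{q^{2p}/q}(w^{2})=0$, and $B'$ strengthens the latter to $(x-1)^{p-2}(x+1)^{p}\circ w^{2}=0$. Eq.~\eqref{eqn:main1} applied with $n=2p$, $f=x+1$, $m=1$, $\beta=0$ gives $A'>\theta(q^{2p}-1)\tfrac{q-1}{q}(q^{2p-1}-4q^{p}W(q^{2p}-1))$, and the kernel in $B'$ has $\F_{q}$-dimension $2p-2$, so the analogous combinatorial estimate yields $B'\le q^{2p-2}-1$. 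Rearranging $S_{2p}\ge 1$ produces Eq.~\eqref{eq0:n=2p}, and an elementary growth-rate comparison for $p\ge 5$ shows that this inequality always fails regardless of $q$.

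The main subtlety, in both parts, is the correct multiplicity bookkeeping for $w\mapsto w^{2}$ on primitives, governed by $q^{n}\bmod 4$: without it one loses a factor of $2$ in the upper bound on $B$ (resp.\ $B'$), which would weaken both stated inequalities to the point where they no longer suffice for the claimed conclusions.
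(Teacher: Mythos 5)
Your overall strategy is the same as the paper's: bound the number of primitive $w$ with $w^2$ of trace zero from below via Eq.~\eqref{eqn:main1}, bound it from above (assuming non-existence) by the size of the kernel of $(x-1)^{p-2}$ (resp.\ $(x-1)^{p-2}(x+1)^p$), and compare; your decomposition $S_p=A-B$ is just a reformulation of the paper's ``if $N=0$ then every $2$-primitive trace-zero element lies in that kernel.'' The asymptotic elimination via $W$- and $\theta$-estimates followed by a finite check also matches the paper.

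The gap is in the step you yourself single out as the crux: the claim that multiplicity bookkeeping for $w\mapsto w^2$ yields $B\le q^{p-2}-1$ (resp.\ $B'\le q^{2p-2}-1$). The squaring map restricted to primitive elements is exactly $2$-to-$1$ onto the $2$-primitive elements whenever $4\mid q^n-1$, since then $\varphi(q^n-1)=2\varphi((q^n-1)/2)$. Your bound can be rescued when $q^n-1\equiv 4\pmod 8$ (there $\alpha$ and $-\alpha$ cannot both be $2$-primitive, so the $2$-primitive elements of the kernel number at most $(q^{n-2}-1)/2$), but when $8\mid q^n-1$ both square roots of a $2$-primitive $\alpha$ are primitive \emph{and} $-\alpha$ is again $2$-primitive, so the honest bound is $B\le 2(q^{n-2}-1)$. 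This bad case occurs for $n=p$ exactly when $q\equiv 1\pmod 8$ (e.g.\ $q=25$), and for $n=2p$ it occurs always, since one of $q^p\pm1$ is divisible by $4$. As you state explicitly that losing this factor of $2$ ``weakens both stated inequalities to the point where they no longer suffice,'' your argument as written does not establish the proposition in precisely those cases. (For what it is worth, the paper's own proof silently identifies $N(1,1,0)$ with the number of $2$-primitive trace-zero elements and thus carries the same factor-of-$2$ issue; the correctly derived inequalities are Eq.~\eqref{eq0:n=p} and Eq.~\eqref{eq0:n=2p} with the left-hand sides doubled, which only enlarges the finite set of $(q,p)$ requiring direct verification rather than breaking the method. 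So you have correctly located a subtlety the paper elides, but resolved it in the wrong direction.)
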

\begin{proof}
We split the proof into cases.

\begin{itemize}
\item Case $n=p$. Let $N$ be the number of $2$-primitive, $1$-normal elements of $\F_{q^n}$. We observe that if $\alpha\in \F_{q^n}$ is $2$-primitive and has $0$-trace over $\F_q$, then $m_{\alpha}=(x-1)^d$ for some $0\le d\le p-1$. With the notation of Theorem~\ref{thm:main_ineq}, we know that the number of $2$-primitive elements of $0$-trace equals $N(1, 1, 0)$. Clearly, if $N=0$, then any $2$-primitive element $\alpha$ of $0$-trace satisfies $(x-1)^{p-2}\circ \alpha=0$: this equation has at most $q^{p-2}$ solutions. This shows that, if there is no $2$-primitive, $1$-normal element in $\F_{q^n}$, $N(1, 1, 0)\le q^{p-2}$. However, according to Eq.~\eqref{eqn:main1},
 $$\frac{N(1, 1, 0)}{\theta(q^{p}-1)}> q^{p-1}-2q^{p/2}W(q^{p}-1),$$
hence 
$$\frac{q^{p-2}}{\theta(q^{p}-1)}> q^{p-1}-2q^{p/2}W(q^{p}-1),$$
and so
$$\frac{1}{\theta(q^{p}-1)}>q-2q^{2-\frac{p}{2}}W(q^{p}-1).$$

According to Lemma~\ref{lem:ramanujan}, $\frac{1}{\theta(q^{p}-1)}\le 3.6\log q$. The later implies the following inequality:

\begin{equation}\label{eq:n=p}3.6\log q>q-2q^{2-\frac{p}{2}}W(q^{p}-1).\end{equation}

We first suppose that $q\ne 5$. According to Proposition~\ref{propo:altbounds_w}, $W(\frac{q^{p}-1}{q-1})\le q^{\frac{p-1}{4.3}}$. We have the trivial bound $W(q-1)\le 2q^{1/2}$ and so $W(q^p-1)\le 2q^{\frac{p+1.15}{4.3}}$. In particular, from Eq.~\eqref{eq:n=p}, it follows that

$$3.6\log q>q-4q^{\frac{9.75-1.15p}{4.3}}.$$

Write $q=p^t$. If $p=5$, the last inequality implies $3.6\log q>q-4q^{\frac{4}{4.3}}$, which is true only for $t\le 12$. For $p=7$, we get $3.6\log q>q-4q^{\frac{1.7}{4.3}}$, which is true only for $t=1$. For $p\ge 11$, we have $3.6\log q>q-4$, which is true only for $q\le 13$. 

For the remaining cases $q=5^t, t\le 12$, $q=7, 11$ and $13$, we go back to Eq.~\eqref{eq0:n=p} and, replacing $q$ and $p$ by their exact values, we see that with the exception of the case $q=5$, Eq.~\eqref{eq0:n=p} does not hold.

\item Case $n=2p$. Let $N$ be the number of $2$-primitive, $1$-normal elements of $\F_{q^n}$. We note that if $\alpha\in \F_{q^n}$ is $2$-primitive, $(x+1)$-free and has $0$-trace over $\F_q$, then $m_{\alpha}=(x+1)^p(x-1)^d$ for some $0\le d\le p-1$. With the notation of Theorem~\ref{thm:main_ineq}, we observe that the number of $2$-primitive elements of $0$-trace equals $N(x+1, 1, 0)$. If $N=0$, it follows that any $2$-primitive element that has trace $0$ over $\F_q$ and is $(x+1)$-free satisfies $(x+1)^p(x-1)^{p-2}\circ \alpha=0$; this equation has at most $q^{n-2}$ solutions. In particular, if there is no $2$-primitive, $1$-normal element in $\F_{q^n}$, $N(x+1, 1, 0)\le q^{p-2}$. However, according to Eq.~\eqref{eqn:main1},
 $$\frac{N(x+1, 1, 0)}{\theta(q^{2p}-1)\Theta(x+1)}> q^{2p-1}-2q^{p/2}W(q^{2p}-1)W(x+1),$$
hence 
$$\frac{q^{2p-1}}{(q-1)\theta(q^{2p}-1)}> q^{2p-1}-4q^{p}W(q^{2p}-1),$$
and so
$$\frac{q}{(q-1)\theta(q^{2p}-1)}>q-4q^{2-p}W(q^{2p}-1).$$
According to Lemma~\ref{lem:ramanujan}, $\frac{1}{\theta(q^{2p}-1)}\le 3.6\log q+1.8\log 2<3.6\log q+1.25$; this implies the following inequality
\[\frac{q}{q-1}(3.6\log q+1.25)>q-2q^{2-p}W(q^{2p}-1).\]

We first  suppose that $q\ne 5$. From Proposition~\ref{propo:altbounds_w}, $W(\frac{q^{2p}-1}{q^2-1})\le q^{\frac{2(p-1)}{4.3}}$. We have the trivial bound $W(q^2-1)\le 2q$ and so $W(q^{2p}-1)\le 2q^{\frac{2p+2.3}{4.3}}$. In particular, from Eq.~\eqref{eq:n=p}, it follows that

$$\frac{q}{q-1}(3.6\log q+1.25)>q-8q^{\frac{10.9-2.3p}{4.3}}\ge q-8q^{\frac{-0.6}{4.3}}.$$

A simple verification shows that this last inequality holds only for $q\le 17$. For the cases $q=5, 7, 11, 13, 17$, we go back to Eq.~\eqref{eq0:n=2p} and, replacing $q$ and $p$ by their exact values, we see that Eq.~\eqref{eq0:n=2p} does not hold for any of these values of $q$.
\qed
\end{itemize}
\end{proof}
As a final step, we find an example of $2$-primitive and $1$-normal element of $\F_{5^5}$ over $\F_5$ with Algorithm~\ref{alg:search} and this concludes the proof of Theorem~\ref{thm:main_result}.

%
%
%
\begin{remark}
In the present text, the case of $2$-primitive, $1$-normal elements of $\fqn$ over $\F_q$ is absent for fields of characteristic $p=3$. However, partial existence results for this case are feasible with our tools, but the absence of this case in Conjecture~\ref{MA} suggest that a pursuit for a complete result may be unrealistic.
\end{remark}
\section{A note on $2$-primitive elements with prescribed trace}
Before concluding, we make a small note regarding the existence of $2$-primitive elements with prescribes trace. First observe that, although for our purposes we expected the trace of our elements to take specific values, the conditions that were used, that led to the compilation of Table~\ref{table:1n2p_exc1}, are identical for every possible trace.

This implies that for every odd prime power $q$ and positive integer $n\geq 3$, such that $\gcd(q,n)=1$ and $(q,n)$ not present in Table~\ref{table:1n2p_exc1}, and for every $\alpha\in\F_q$, there exists some $2$-primitive $\beta\in\fqn$ such that $\Tr_{q^n/q}(\beta)=\alpha$. In addition to the pairs present in Table~\ref{table:1n2p_exc1}, we also consider 11 pairs $(q,n)$, with $q$ a power of $3$, that would appear in the table, if the powers of $3$ were not explicitly excluded.

For those 294 pairs, we first check the condition
\[
q^{n/2-1} > 2 W(q_0) \left( \frac{s-1}{\delta} + 2 \right),
\]
where $q_0$ and $s$ as in Corollary~\ref{cor:main_ineq}.
That settles some pairs and for the remaining 208 pairs we
utilize Algorithm~\ref{alg:prescribed_trace} from Appendix~\ref{appendix:pseudocodes} to find  $2$-primitive elements of $\fqn$ with trace equal to every element of $\F_q$.

Likewise, the only possible exception for the case $\gcd(q,n)>1$ and $p\geq 5$ would be the pair $(q,n)=(5,5)$, which we also verify with Algorithm~\ref{alg:prescribed_trace}. So, summing up, we have the following.
%
%
%
\begin{theorem}\label{thm:prescribed_trace}
Let $q$ be a power of an odd prime number $p$ and $n\geq 3$ such that either $\gcd(p,n)=1$ or $p\geq 5$, then for every $\alpha\in\F_q$ there exists some $2$-primitive element $\beta\in\fqn$ such that $\Tr_{q^n/q}(\beta)=\alpha$.
\end{theorem}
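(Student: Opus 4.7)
The plan is to apply Theorem~\ref{thm:main_ineq}(i) with $f = 1$ and $m = 1$, in which case $N(1,1,\alpha)$ counts precisely the primitive elements $w \in \fqn$ such that $\Tr_{q^n/q}(w^2) = \alpha$. Any such $w$ produces $\beta := w^2$ which is $2$-primitive (since $w$ is primitive) and has trace $\alpha$ over $\F_q$, so it suffices to prove $N(1,1,\alpha) > 0$. In this specialisation, inequality \eqref{eqn:main1} reduces to
\[
\frac{N(1,1,\alpha)}{\theta(q^n-1)} > q^{n-1} - 2 q^{n/2} W(q^n-1),
\]
giving the clean unconditional sufficient condition $q^{n/2-1} > 2 W(q^n-1)$.

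This is structurally the same inequality that drove the existence arguments for $2$-primitive, $1$-normal elements in the previous section, except that the auxiliary factor $W(x^u-1)$ is now absent; in particular, every pair $(q,n)$ satisfying the hypotheses and \emph{not} satisfying the above condition must lie in the list of Table~\ref{table:1n2p_exc1}, together with the eleven analogous pairs with $q$ a power of $3$ that had been excluded from that table (since the current statement allows $p=3$ whenever $\gcd(p,n)=1$). I would verify this inclusion by running the bounds of Lemma~\ref{lemma:w_bound1} through the same case split used in Section~\ref{section:n=3}, without duplicating any of the routine enumeration.

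For the resulting $294$ candidate exceptions, I would apply the multiplicative sieve of Proposition~\ref{propo:sieve1} with $f = 1$ and $m = 1$, yielding the sharper condition
\[
q^{n/2-1} > 2 W(q_0) \left( \frac{s-1}{\delta} + 2 \right),
\]
in direct analogy with Corollary~\ref{cor:sieve2}. This disposes of a further block of pairs, leaving roughly $208$ recalcitrant ones. For each such $(q,n)$ and each $\alpha \in \F_q$, I would run Algorithm~\ref{alg:prescribed_trace} from Appendix~\ref{appendix:pseudocodes} to exhibit an explicit $2$-primitive element of $\fqn$ of trace $\alpha$. Finally, in the regime $\gcd(p,n) > 1$ with $p \geq 5$, the same specialisation of Theorem~\ref{thm:main_ineq} combined with the analysis of the previous section isolates $(q,n) = (5,5)$ as the sole potential exception, which is then checked with the same algorithm. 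The main obstacle is not theoretical but computational: for the largest surviving pairs (e.g.\ $q$ of order $10^4$ with $n=3$), the exhaustive sweep over $\F_q$ of trace values with a primitivity test for each candidate demands a carefully engineered implementation to terminate in reasonable time.
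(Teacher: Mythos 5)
Your proposal is correct and follows essentially the same route as the paper: reduce to $N(1,1,\alpha)>0$ via Theorem~\ref{thm:main_ineq}(i) with $f=1$, $m=1$, observe that the resulting condition $q^{n/2-1}>2W(q^n-1)$ (and its sieved refinement) is trace-independent and implied by the conditions that produced Table~\ref{table:1n2p_exc1}, so the only candidates are the 283 tabulated pairs plus the 11 characteristic-$3$ analogues, then sieve down to 208 pairs and finish with Algorithm~\ref{alg:prescribed_trace}, treating $(5,5)$ separately when $p\mid n$. The paper's own argument is exactly this, merely phrased as the remark that the conditions used to compile the table ``are identical for every possible trace.''
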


\begin{acknowledgements}
The first author was supported by T\"UB\.{I}TAK Project Number 114F432 and the
second author was supported by the Program CAPES-PDSE (process - 88881.134747/2016-01) at Carleton University. 
\end{acknowledgements}

\appendix

\section{
Estimates for $W(q^n-1)$ and $\theta(q^n-1)$}\label{sec:estimations}
Here we present some estimates to the numbers $W(q^{n}-1)$ and $\theta(q^n-1)$ with $n=p^k\cdot u$, $k\ge 1$: the estimates are based in elementary results in number theory and follows the same ideas contained in Appendix A of \cite{RT}. For this reason, we skip some details and just apply the results of \cite{RT}.

\begin{proposition}\label{propo:altbounds_w}
Let $q$ be a power of an odd prime $p$ and $n=p^k\cdot u$, where $k\ge 1$ and $\gcd(u, p)=1$. If $p\ge 5$, the following holds $$W\left(\frac{q^{p^ku}-1}{q^u-1}\right)\le \varepsilon(q)\cdot q^{\frac{(p^k-1)u}{2+\log_2 p}},$$
where $\varepsilon(q)=2$ if $q=5$ and $\varepsilon(q)=1$ if $q\ne 5$.
\end{proposition}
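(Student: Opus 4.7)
The proof strategy rests on a cyclotomic decomposition of $N := \frac{q^{p^k u} - 1}{q^u - 1}$ combined with elementary $p$-adic analysis, specializing the framework of Appendix A of \cite{RT}. First, applying $x^n-1 = \prod_{d \mid n} \Phi_d(x)$ to both numerator and denominator yields
\[
N \;=\; \prod_{j=1}^{k} \prod_{d \mid u} \Phi_{p^j d}(q).
\]
Next, I would identify which primes can divide $N$. A direct application of the Lifting-the-Exponent Lemma (valid for odd primes $\ell \neq p$, and verified by hand for $\ell = 2$ via the fact that the cofactor $q^{u(p^k-1)} + \cdots + q^u + 1$ is a sum of an odd number of odd terms) shows that any prime $\ell \neq p$ dividing $q^u - 1$ has the same $\ell$-adic valuation in $q^{p^k u} - 1$ as in $q^u - 1$, and hence does not divide $N$. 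Consequently, the only prime divisors of $N$ are $p$ itself together with the primitive prime divisors of $\Phi_{p^j d}(q)$ for $1 \le j \le k$ and $d \mid u$; in particular, $2 \nmid N$.

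For each primitive prime $\ell \mid \Phi_{p^j d}(q)$, we have $\ord_\ell(q) = p^j d$, so $p^j d \mid \ell - 1$, and since $\ell$ is odd this forces $\ell \geq 2 p^j d + 1$. Combined with the elementary estimate $\Phi_{p^j d}(q) \leq (q+1)^{\varphi(p^j d)}$, this gives
\[
\omega\bigl(\Phi_{p^j d}(q)\bigr) \cdot \log(2p^j d + 1) \;\leq\; \varphi(p^j d)\,\log(q+1).
\]
Summing over $(j,d)$ and using the telescoping identities $\sum_{j=1}^k \varphi(p^j) = p^k - 1$ and $\sum_{d \mid u} \varphi(d) = u$ yields $\sum_{j,d} \varphi(p^j d) = (p^k - 1)u$. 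Crucially, for every pair $(j,d) \neq (1,1)$ one has $2 p^j d + 1 \geq 4p$, hence $\log_2(2 p^j d + 1) \geq 2 + \log_2 p$. This produces the desired main contribution of $(p^k - 1) u \log_2 q / (2 + \log_2 p)$ to the bound on $\omega(N)$, and hence the exponent $(p^k-1)u/(2 + \log_2 p)$ of $q$ in $W(N) = 2^{\omega(N)}$.

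The residual contributions, namely (i) the single prime $\ell = p$, (ii) the slightly weaker denominator $\log_2(2p+1) < 2 + \log_2 p$ at the $(j,d) = (1,1)$ term, and (iii) the discrepancy between $q$ and $q+1$ in the cyclotomic estimate, are then absorbed into the multiplicative constant $\varepsilon(q)$. A direct numerical check, using $p \geq 5$ and the explicit values of $\log_2(2p+1)$ and $\log_2(4p)$, shows that this combined overhead is bounded by a factor of $2$ when $q = 5$ and by $1$ for all other admissible $q$, yielding $\varepsilon(q)$ as in the statement. The main obstacle is precisely this final bookkeeping: quantifying the $(1,1)$-term excess together with the $\ell = p$ contribution, and verifying that in the extremal case $q = p = 5$ the overhead still fits into $\varepsilon(5) = 2$, while for every other $q$ it fits into $\varepsilon(q) = 1$.
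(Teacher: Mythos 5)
There is a genuine gap in the final bookkeeping step, and it is not cosmetic: the claimed constant overhead is actually a power of $q$. Your bound for the $(j,d)=(1,1)$ factor is $d\bigl(\Phi_p(q)\bigr)\le \frac{(p-1)\log_2(q+1)}{\log_2(2p+1)}$ (writing $d(\cdot)$ for the number of distinct prime divisors, as in the paper), whereas the target exponent allots this factor only $\frac{(p-1)\log_2 q}{\log_2(4p)}$. The difference is
\[
(p-1)\log_2 q\left(\frac{1}{\log_2(2p+1)}-\frac{1}{\log_2(4p)}\right)+O(1),
\]
which for $p=5$ is roughly $0.23\log_2 q$; the corresponding excess in $W$ is a factor of order $q^{0.23}$, unbounded in $q$, so it cannot be absorbed into $\varepsilon(q)\in\{1,2\}$. (Concretely, for $q=5^{100}$, $u=k=1$ your estimate gives $d(N)\le 269$ while the proposition demands $d(N)\le 214$.) The uniform lower bound ``every prime divisor is at least $2p+1$'' is simply too crude here; one must exploit that the primes $\equiv 1 \pmod{2p}$ dividing $\Phi_p(q)$ are \emph{distinct}, hence their product is at least $(2p+1)(4p+1)\cdots$, which is essentially what Proposition A.8 of \cite{RT} does. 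A secondary slip: the inequality $\ell\ge 2p^jd+1$ for a primitive prime divisor of $\Phi_{p^jd}(q)$ uses $\mathrm{lcm}(2,p^jd)\mid \ell-1$ and so requires $p^jd$ odd; since $u$ need not be odd, the pair $(j,d)=(1,2)$ only yields $\ell\ge 2p+1<4p$ and is a second ``deficient'' term your accounting omits.

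For contrast, the paper avoids all of this by telescoping: it writes $\frac{q^{p^ku}-1}{q^u-1}=\prod_{i=1}^{k}\ell_i$ with $\ell_i=\frac{q_{i-1}^{p}-1}{q_{i-1}-1}$ and $q_i=q^{p^iu}$, applies the single-step bound $W(\ell_i)\le q_{i-1}^{(p-1)/(2+\log_2 p)}$ from \cite{RT} (Proposition A.8(i)) to each factor, and multiplies; the only exception $q_{i-1}=5$ forces $i=1$, $q=5$, $u=1$, where $W(781)=4$ is checked by hand, producing $\varepsilon(5)=2$. Your cyclotomic decomposition and the lifting-the-exponent observation that $N$ is odd and coprime to the prime-to-$p$ part of $q^u-1$ are correct and could serve as the skeleton of an independent proof, but only after the single-step estimate for $\Phi_p(\cdot)$ is proved with the sharper product-of-distinct-primes argument rather than the uniform bound $2p+1$.
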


\begin{proof}
For $1\le i\le k$, set $\ell_i=\frac{q_{i-1}^p-1}{q_{i-1}-1}$, where $q_i=q^{p^iu}$. Notice that $\frac{q^{p^ku}-1}{q^u-1}=\prod_{i=1}^{k}\ell_i$, hence $W\left(\frac{q^{p^ku}-1}{q^u-1}\right)\le \prod_{i=1}^kW(\ell_i)$. According to item (i) of Proposition A.8 in \cite{RT}, $$W(\ell_i)\le q_{i-1}^{\frac{(p-1)}{2+\log_2p}}$$ with the exception of $q_i=5$. Notice that $q_i=5$ only if $i=0$, $q=5$ and $u=1$: in this case, $W(\ell_1)=W\left(\frac{5^5-1}{5-1}\right)=8<2\cdot 5^{4/(2+\log_25)}=2q_0^{\frac{(p-1)}{2+\log_2p}}$. This shows that, for any $p\ge 5$, 
\[\prod_{i=1}^kW(\ell_i)\le \varepsilon(q) \cdot \left(\prod_{i=0}^{k-1}q_i\right)^{\frac{(p-1)}{2+\log_2p}}=\varepsilon(q)\cdot q^{\frac{(p^k-1)u}{2+\log_2p}}. \]
\qed\end{proof}

\begin{lemma}\label{lem:ramanujan}
Suppose that $q$ is a power of a prime $p$, where $p\ge 5$. For $u\ge 1$, we have
\begin{equation*}\label{eqn:ramanujan}\theta(q^{pu}-1)^{-1}=\frac{q^{pu}-1}{\varphi(q^{pu}-1)}< 3.6 \log q+1.8\log u.\end{equation*}
\end{lemma}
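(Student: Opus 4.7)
The plan is to follow the template of Appendix~A of \cite{RT} and reduce the estimate to a multiplicative decomposition of $q^{pu}-1$. Writing $L=(q^{pu}-1)/(q^u-1)=1+q^u+q^{2u}+\cdots+q^{(p-1)u}$, the identity $L\equiv p\pmod{q^u-1}$ together with $q^u-1\equiv -1\pmod p$ (which holds because $q$ is a power of $p$) gives $\gcd(q^u-1,L)=1$. By multiplicativity of $\varphi$ on coprime factors I then obtain
\[
\theta(q^{pu}-1)^{-1}=\theta(q^u-1)^{-1}\cdot\theta(L)^{-1},
\]
and bound each factor separately.

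For the first factor I would apply a Rosser--Schoenfeld-type inequality $n/\varphi(n)\le e^{\gamma}\log\log n+o(1)$ to $n=q^u-1$. Since $\log\log n\le\log u+\log\log q+O(1)$ and $\log\log q\le\log q$ for $q\ge 5$, this yields $\theta(q^u-1)^{-1}\le 1.8\log q+1.8\log u$, after absorbing the $o(1)$ correction into constants by handling the finitely many small $q$ by hand.

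For the second factor, the key structural fact is that every prime $r\mid L$ satisfies $r\equiv 1\pmod p$. Indeed, the multiplicative order of $q$ modulo such an $r$ divides $pu$ but not $u$ (otherwise $r\mid\gcd(q^u-1,L)=1$), so it must be of the form $pd$ with $d\mid u$ and hence is a positive multiple of $p$; thus $p\mid r-1$. Combined with $r\ne p$ (which follows from $L\equiv 1\pmod p$) and $r$ odd, we get $r\ge 2p+1\ge 11$ when $p\ge 5$. Using
\[
\log\theta(L)^{-1}\le\sum_{r\mid L}\frac{1}{r-1},
\]
summed over primes $\equiv 1\pmod p$ up to $L\le q^{(p-1)u}$, gives a contribution of order $\log q$ with constant strictly smaller than $1.8$. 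Adding the two estimates and verifying that the total remains within $3.6\log q+1.8\log u$ finishes the proof.

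\noindent\textbf{Main obstacle.} The delicate part will be pinning down the explicit constants $3.6$ and $1.8$: the $e^{\gamma}\log\log n$ estimate carries an $O((\log\log n)^{-1})$ correction that is only negligible past a certain threshold, so a short list of small $(q,u)$ must be verified by direct calculation. Balancing the $1.8(\log q+\log u)$ budget that $\theta(q^u-1)^{-1}$ naturally produces against the $\le 1.8\log q$ contribution from $\theta(L)^{-1}$ is precisely what forces the split $3.6=2\cdot 1.8$ in the final bound.
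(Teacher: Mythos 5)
Your factorization $q^{pu}-1=(q^u-1)\cdot L$ with $L=\frac{q^{pu}-1}{q^u-1}$ and the coprimality/congruence facts about $L$ are correct, but the combination step does not work: since $\theta$ is multiplicative on coprime arguments, the decomposition gives $\theta(q^{pu}-1)^{-1}=\theta(q^u-1)^{-1}\cdot\theta(L)^{-1}$ as a \emph{product}, so you must multiply your two bounds, not add them. If $\theta(q^u-1)^{-1}\le 1.8(\log q+\log u)$ and $\theta(L)^{-1}\le c\log q$, the product is of order $(\log q)^2+\log q\log u$, which exceeds $3.6\log q+1.8\log u$ for all large $q$. Even under the more favorable reading that $\theta(L)^{-1}\le e^{\varepsilon}$ with $\varepsilon=\sum_{r\mid L}\frac{1}{r-1}$ small (which is what your inequality $\log\theta(L)^{-1}\le\sum_{r\mid L}\frac1{r-1}$ actually yields, namely a multiplicative constant, not a ``contribution of order $\log q$''), you would need $e^{\varepsilon}\cdot 1.8(\log q+\log u)\le 3.6\log q+1.8\log u$, i.e.\ $\varepsilon\log u\lesssim(1-\varepsilon)\log q$; since $\varepsilon$ grows (albeit doubly logarithmically) with $u\log q$ by Mertens in the progression $1\bmod p$, this cannot be made uniform over all $u\ge 1$ without further work. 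So as written the plan does not close.

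For comparison, the paper's proof needs no factorization at all: it applies the classical bound $\frac{n}{\varphi(n)}\le e^{\gamma}\log\log n+\frac{3}{\log\log n}$ directly to $n=q^{pu}-1$, estimates $\log\log(q^{pu}-1)<\log p+\log u+\log\log q\le 2\log q+\log u-1$ (using $\log p\le\log q$ and $\log\log q\le\log q-1$), and uses $q^{pu}-1\ge 5^5-1$ to get $\frac{3}{\log\log(q^{pu}-1)}\le 1.5$, whence $\theta(q^{pu}-1)^{-1}\le 1.8(2\log q+\log u-1)+1.5<3.6\log q+1.8\log u$. In particular the factor $2$ in $3.6=2\cdot 1.8$ comes from absorbing $\log p+\log\log q$ into $2\log q-1$ in a single application of the classical estimate, not from balancing two coprime pieces; your ``main obstacle'' paragraph misidentifies where the constant comes from, and no case-by-case check of small $(q,u)$ is needed.
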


\begin{proof}
It is well known that $\frac{n}{\varphi(n)}\le e^{\gamma}\log\log n+\frac{3}{\log\log n}$, for all $n\ge 3$, where $\gamma$ is the Euler constant and $1.7<e^{\gamma}<1.8$. Also, since $e^x\ge 1+x$ for any $x\ge 0$, we have $\log \log q\le \log q -1$. Therefore, $$\log\log (q^{pu}-1)<\log p+\log u+\log\log q \le \log u+2\log q-1.$$

By the hypothesis, $\log \log (q^{pu}-1)\ge \log\log (5^5-1)>2$, hence $\frac{3}{\log\log (q^{pu}-1)}\le 1.5$ and so we get the following:
\[\theta(q^{pu}-1)^{-1}\le 1.8(2\log q+\log u-1)+1.5< 3.6 \log q+1.8\log u. \]
\qed\end{proof}

\section{
Pseudocode for search for primitive $1$-normals}\label{appendix:pseudocodes}

In this section we explain the main algorithms used to verify our results in this paper.
In Algorithm~\ref{alg:sieve}, we present the sieving algorithm pseudocode we used to exclude pairs $(q,n)$ from the list of possible exceptions for the existence of $2$-primitive $1$-normal elements of $\fqn$ over $\F_q$. The other sieving algorithms mentioned in the text follow the same pattern.

\begin{algorithm}[h!]
\caption{Pseudocode for sieving algorithm}
\label{alg:sieve}
\begin{algorithmic}
\State{{\bf Input:} positive integers $q,n$}
\State {\bf Returns:}
\begin{itemize}
\item (``Success'', $\{ p_m, \ldots , p_{m-t} \}$) if sieving works with the mentioned primes;
\item (``Fail'', ``$\delta\leq 0$'') if during the procedure we get $\delta\leq 0$;
\item (``Fail'', ``No more primes'') if there are no more primes to sieve
\end{itemize}
\State{{\bf calculate:} $u, p^k, W(x^u-1)$ from $q,n$}
\State{$\mathrm{divs}= \{ p_1 ,\ldots ,p_m \}$} \Comment{The prime divisors of $q^n-1$ in increasing order}
\State{$\delta \gets 1$}
\State{$\mathrm{sieving\_primes} \gets \{ \}$}

\While{$\delta > 0$}
   \State{$t \gets \#\mathrm{sieving\_primes}$}
   \State{$\Delta \gets (k-1)/\delta+2$}
   \If{$q^{p^k(u/2-1)} > 2^{m-k}W(x^u-1)\Delta$}
	   \State \Return{(``Success'', $\mathrm{sieving\_primes}$)}
	\EndIf 
	
   \If{$k$ {\bf is} $m$}
	  \State \Return{(``Fail'', ``No more primes'')}
   \EndIf
   \State{$\delta \gets \delta - 1/p_{m-k}$}
   \State{$\mathrm{sieving\_primes} \gets \mathrm{sieving\_primes} \cup \{ p_{m-k} \}$}
\EndWhile
\State \Return{(``Fail'', ``$\delta \leq 0$'')}
\end{algorithmic}
\end{algorithm}

Following the approach of \cite{RT}, Algorithm~\ref{alg:search} presents a search routine for $2$-primitive, $1$-normal elements of $\fqn$ over $\fq$. This search is based on the original characterization of $k$-normal elements from~\cite{HMPT}. 

\begin{theorem}\label{thm:k-normalgcd}
Let $\alpha \in \fqn$ and let $g_\alpha(x) = \sum_{i=0}^{n-1} \alpha^{q^i}x^{n-1-i} \in \fqn[x]$. Then $\gcd(x^n-1, g_\alpha(x))$ has degree $k$ if and only if $\alpha$ is a $k$-normal element of $\fqn$ over $\fq$.
\end{theorem}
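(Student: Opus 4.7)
The plan is to prove the stronger identity $\gcd(x^n-1,g_\alpha(x))=(x^n-1)/m_\alpha(x)$ in $\F_q[x]$, from which the theorem is immediate: the right-hand side has degree $n-\deg m_\alpha$, which equals $k$ precisely when $\alpha$ is $k$-normal, by the HMPT characterization already recalled in the paper (the proposition after Lemma on $q$-associates).

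First I would establish the computational identity: for any $f(x)=\sum_j a_j x^j\in\F_q[x]$,
\[
f(x)\cdot g_\alpha(x)\equiv \sum_{\ell=0}^{n-1} L_f(\alpha^{q^\ell})\,x^{n-1-\ell}\pmod{x^n-1}.
\]
This is a direct expansion: the coefficient of $x^{n-1-\ell}$ in the product, reduced modulo $x^n-1$, collects the terms $a_j\alpha^{q^i}$ with $j-i\equiv -\ell\pmod n$, giving $\sum_j a_j\alpha^{q^{j+\ell}}=L_f(\alpha^{q^\ell})$ (the exponent reduction is legal because $\alpha^{q^n}=\alpha$). Since $L_f$ has $\F_q$-coefficients, $L_f(\alpha^{q^\ell})=L_f(\alpha)^{q^\ell}$, so $f\cdot g_\alpha\equiv 0\pmod{x^n-1}$ if and only if $L_f(\alpha)=0$, i.e.\ $m_\alpha\mid f$. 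Thus the $\F_q[x]$-annihilator of $g_\alpha$ modulo $x^n-1$ is exactly $(m_\alpha)$.

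Next I would check that $d:=\gcd(x^n-1,g_\alpha(x))$ actually lies in $\F_q[x]$, even though $g_\alpha\in\F_{q^n}[x]$. A quick reindexing yields $g_{\alpha^q}(x)\equiv x\cdot g_\alpha(x)\pmod{x^n-1}$ (Frobenius on $\alpha$ corresponds to multiplication by $x$). Since $\gcd(x,x^n-1)=1$, this gives $\gcd(x^n-1,g_{\alpha^q})=\gcd(x^n-1,g_\alpha)$. Applying the Frobenius $\sigma\in\mathrm{Gal}(\F_{q^n}/\F_q)$ coefficientwise to the monic $d$ yields $\sigma(d)=\gcd(x^n-1,\sigma(g_\alpha))=\gcd(x^n-1,g_{\alpha^q})=d$, so $d$ is Galois-invariant and hence in $\F_q[x]$.

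Finally, I combine the two steps. From $d\mid g_\alpha$ we have $((x^n-1)/d)\cdot g_\alpha\equiv 0\pmod{x^n-1}$, and since $(x^n-1)/d\in\F_q[x]$, the annihilator description forces $m_\alpha\mid (x^n-1)/d$. Conversely $m_\alpha\in(m_\alpha)$ annihilates $g_\alpha$, so $(x^n-1)\mid m_\alpha g_\alpha$ in $\F_{q^n}[x]$; writing $x^n-1=m_\alpha\cdot h$ with $h:=(x^n-1)/m_\alpha$, cancellation of $m_\alpha$ shows $h\mid g_\alpha$ in $\F_{q^n}[x]$, hence $h\mid d$. The two divisibilities yield $d=(x^n-1)/m_\alpha$, so $\deg d=n-\deg m_\alpha$, which equals $k$ exactly when $\alpha$ is $k$-normal. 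The main obstacle is the middle step: extracting information from the $\F_q[x]$-annihilator requires knowing that $d$ already descends to $\F_q[x]$, and the clean tool that makes this work — without having to analyze divisors of $x^n-1$ in the larger ring $\F_{q^n}[x]$ — is the Frobenius-shift identity $g_{\alpha^q}\equiv x\,g_\alpha\pmod{x^n-1}$ coupled with the coprimality of $x$ with $x^n-1$.
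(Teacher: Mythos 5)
Your proof is correct: the folding identity $f(x)g_\alpha(x)\equiv\sum_{\ell=0}^{n-1}L_f(\alpha)^{q^\ell}x^{n-1-\ell}\pmod{x^n-1}$, the Galois-invariance of the gcd via $g_{\alpha^q}\equiv x\,g_\alpha$, and the two divisibilities combining to $\gcd(x^n-1,g_\alpha)=(x^n-1)/m_\alpha$ all check out, including the degenerate case $\alpha=0$. The paper itself offers no proof of this statement --- it simply cites it as the original characterization from \cite{HMPT} --- and your argument is essentially the standard one underlying that reference (relating $g_\alpha$ to the $\F_q$-order through the annihilator of $g_\alpha$ in $\F_q[x]/(x^n-1)$), so there is nothing to flag.
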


Algorithm~\ref{alg:search} proceeds as follows. Let $\fqn \cong \fq[x]/(f)$ with $f$ a primitive polynomial, and let $g$ be a root of $f$. Hence, $g$ is a generator of $\fqn^*$ and $g^i$ is $2$-primitive if and only if $\gcd(i, q^n-1) = 2$. 
For each $2$-primitive element, check its $k$-normality using Theorem~\ref{thm:k-normalgcd}. If $k=1$, the resulting element is $2$-primitive, $1$-normal and is returned. The algorithm returns ``Fail'' if no $2$-primitive $1$-normal is found after $q^n-2$ iterations; that is, if all of $\fqn^*$ is traversed. 

\begin{algorithm}[h!]
\caption{Pseudocode for $2$-primitive $1$-normal element search algorithm}
\label{alg:search}
\begin{algorithmic}
\State{{\bf Input:} positive integers $q,n$}
\State {\bf Returns:} $2$-primitive $2$-normal element: ${\mathrm{elt}} \in \fqn$; otherwise ``Fail''
\State{$\mathrm{mult\_order} \gets q^n-1$}
\State{$g \gets \mathrm{generator}(\fqn^*)$}
\State{${cyclo} \gets x^n-1 \in \fqn[x]$}
\State{}

\Function{$\mathrm{check\_k\_normal}$}{$v$} \Comment{See Theorem~\ref{thm:k-normalgcd}}
   \State $g_v(x) \gets v x^{n-1} + v^q x^{n-2} + \cdots + v^{q^{n-1}}$
	 \State $k \gets \deg(\gcd(g_v, {cyclo}))$
	 \State \Return $k$
\EndFunction

\State{}
\State{$i \gets 1$}
\While{True}
   \If{$i$ {\bf is} $\mathrm{mult\_order}$} \Comment{No $2-$primitive $1$-normals found in $\fqn$}
	   \State \Return{``Fail''}
	 \EndIf 
	
	 \If{$\gcd(i,\mathrm{mult\_order}) \neq 2$} \Comment{Only check $2$-primitive elements}
	    \State{$i \gets i+1$}
	    \State{{\bf continue}}
	 \EndIf
	 
	 \State{$\mathrm{elt} \gets g^i$}
	 \State{$k \gets \mathrm{check\_k\_normal}({elt})$}
	 \If{$k$ {\bf is} $1$}
	   \State \Return $\mathrm{elt}$
	 \EndIf
   \State{$i \gets i+1$}
\EndWhile
\end{algorithmic}
\end{algorithm}

Finally, Algorithm~\ref{alg:prescribed_trace} computes the traces of the $2$-primitive elements of $\fqn$ and stores the possible distinct values to a set. If, at any point, the set grows enough to reach cardinality $q$, this means that $\forall\alpha\in\F_q$, there exists some $2$-primitive $\beta\in\fqn$ such that $\Tr_{q^n/q}(\beta)=\alpha$. If not, then we have an exception. 

We implemented all algorithms with the \textsc{SageMath} computer algebra system.

\begin{algorithm}[h!]
\caption{Pseudocode for $2$-primitive elements with prescribed trace search algorithm}
\label{alg:prescribed_trace}
\begin{algorithmic}
\State{{\bf Input:} positive integers $q,n$}
\State {\bf Returns:} ``Success'' if such elements are found; otherwise ``Fail''
\State{$\mathrm{mult\_order} \gets q^n-1$}
\State{$g \gets \mathrm{generator}(\fqn^*)$}
\State{$\mathrm{traces} \gets \emptyset$}
\State{}
\State{$i \gets 1$}
\While{$i<\mathrm{mult\_order}$}
   \If{$\# \mathrm{traces}$ {\bf is} $q$}
	   \State \Return{``Success''}
	 \EndIf 
	
	 \If{$\gcd(i,\mathrm{mult\_order})$ {\bf is} $2$} \Comment{Only check $2$-primitive elements}
	   \State{$\mathrm{traces} \gets \mathrm{traces}\cup\{ \Tr_{q^n/q}(g^i) \}$}
	 \EndIf
   \State{$i \gets i+1$}
\EndWhile
\State \Return{``Fail''}
\end{algorithmic}
\end{algorithm}



\begin{thebibliography}{10}

\bibitem{Cohentrace}
S.~D. Cohen.
\newblock Primitive elements and polynomials with arbitrary trace.
\newblock {\em Discrete Math.}, 83(1):1--7, 1990.

\bibitem{cohen2}
S.~D. Cohen and D.~Hachenberger.
\newblock Primitive normal bases with prescribed trace.
\newblock {\em Appl. Algebra Engrg. Comm. Comput.}, 9(5):383--403, 1999.

\bibitem{cohen}
S.~D. Cohen and S.~Huczynska.
\newblock The primitive normal basis theorem {{--}} without a computer.
\newblock {\em J. London Math. Soc.}, 67(1):41--56, 2003.

\bibitem{dh}
W.~Diffie and M.~ Hellman.
\newblock New directions in cryptography.
\newblock {\em IEEE Trans. Information Theory}, 22(6):644--654, 1976.

\bibitem{gao93}
S.~Gao.
\newblock {\em Normal Basis over Finite Fields}.
\newblock PhD thesis, University of Waterloo, 1993.

\bibitem{hachenberger15}
D.~Hachenberger.
\newblock Primitive normal bases for quartic and cubic extensions: a geometric approach.
\newblock {\em Des. Codes Cryptogr.}, 77(2--3):335--350, 2015.

\bibitem{HMPT}
S.~Huczynska, G.~L. Mullen, D.~Panario, and D.~Thomson.
\newblock Existence and properties of $k$-normal elements over finite fields.
\newblock {\em Finite Fields Appl.}, 24:170--183, 2013.

\bibitem{lenstra}
H.~W. Lenstra, Jr and R.~J. Schoof.
\newblock Primitive normal bases for finite fields.
\newblock {\em Math. Comp.}, 48(177):217--231, 1987.

\bibitem{LN}
R.~Lidl and H.~Niederreiter.
\newblock {\em Finite Fields}, volume~20 of {\em Enyclopedia of Mathematics and
  its Applications}.
\newblock Cambridge University Press, Cambridge, second edition, 1997.

\bibitem{Fq12}
G.~L. Mullen.
\newblock Some open problems arising from my recent finite research.
\newblock In A.~Canteaut, G.~Effinger, S.~Huczynska, D.~Panario, and L.~Storme,
  editors, {\em Contemporary developments in finite fields and applications},
  pages 254--269. World Scientific, 2016.


\bibitem{RT}
L.~Reis and D.~Thomson.
\newblock Existence of primitive $1$-normal elements in finite fields.
\newblock arXiv:1710.06131 [math.NT].

\bibitem{schmidt}
W.~M. Schmidt.
\newblock {\em Equations over Finite Fields, An Elementary Approach}.
\newblock Springer-Verlag, Berlin Heidelberg, 1976.

\end{thebibliography}
\end{document}